\newtheorem{theorem}{Theorem}[section]
\newtheorem{lemma}[theorem]{Lemma}
\newtheorem{proposition}[theorem]{Proposition}
\newtheorem{corollary}[theorem]{Corollary}
\theoremstyle{definition}
\newtheorem{remark}[theorem]{Remark}
\numberwithin{equation}{section}
\providecommand{\norm}[1]{\left\| #1 \right\|} 
\renewcommand{\d}{\operatorname{d}\!} 
\DeclareMathOperator{\dt}{\d t}		
\DeclareMathOperator{\ds}{\d s}		
\DeclareMathOperator{\dW}{\d W} 	
\DeclareMathOperator*{\esssup}{ess\,sup}
\newcommand{\R}{\mathbb{R}}
\newcommand{\C}{\mathbb{C}}
\newcommand{\N}{\mathbb{N}}
\newcommand{\E}{\mathbb{E}}
\newcommand{\cA}{{\mathcal A}}
\newcommand{\cD}{{\mathcal D}}
\newcommand{\cH}{\mathcal{H}}
\newcommand{\cB}{{\mathcal B}}
\newcommand{\aA}{{\mathbb A}}
\title[Stochastic bidomain equations]{Global Strong Well-Posedness of the stochastic bidomain equations with FitzHugh--Nagumo transport}
\subjclass[2010]{Primary: 60H15, 92C35, 35K65}
\keywords{stochastic bidomain equation, current noise, non local diffusion, global strong solutions, critical spaces, cardiac electric field}
\author[Hieber]{Matthias Hieber} 
\address{Department of Mathematics,
	TU Darmstadt, Schlossgartenstr. 7, 64289 Darmstadt, Germany}
\email{hieber@mathematik.tu-darmstadt.de}
\author[Hussein]{Amru Hussein}
\address{Department of Mathematics,
TU Kaiserslautern, Paul-Ehrlich-Stra{\ss}e,
67663 Kaiserslautern, Germany}
\email{hussein@mathematik.uni-kl.de}
\author[Saal]{Martin Saal}
\address{Scuola Normale Superiore, Piazza dei Cavalieri 7, 56126 Pisa, Italy}
\email{martin.saal@sns.it}
\begin{document}

\begin{abstract}
Consider the bidomain equations from electrophysiology with FitzHugh--Nagumo transport subject to current noise, i.e., subject to  stochastic forcing  modeled by a cylindrical Wiener process.
It is shown that this set of equations admits a unique global, strong pathwise solution within the setting of critical spaces. The proof is based on combining methods from stochastic and deterministic 
maximal regularity. In addition, the method of extrapolation spaces from deterministic evolution equations  is transferred to the stochastic setting.   
\end{abstract}

\maketitle


\section{Introduction}

The bidomain equations arise in various models describing the propagation of impulses in electrophysiology. These models have a long tradition, starting with the celebrated classical model by 
Hodgkin and Huxley in the 1950s. Following the descriptions in the monographs  by Keener and Sneyd ~\cite{KS98} and by Colli Franzone, Pavarino and Scacchi \cite{CPS14}, this system is given by
\begin{align}\tag{BDE}\label{BDE}
\left\{
\begin{aligned}
\partial_t u + f(u,w) - \nabla \cdot (a_i \nabla u_i) &= I_i & \mathrm{in} & ~ (0 , \infty) \times G,  \\
\partial_t u + f(u,w) + \nabla \cdot (a_e \nabla u_e) &= - I_e & \mathrm{in} & ~ (0 , \infty) \times G,  \\
\partial_t w + g(u,w) &= 0 & \mathrm{in} & ~ (0 , \infty) \times G,  \\
 u_i-u_e&= u & \mathrm{in} & ~ (0 , \infty) \times G  \\
\end{aligned}
\right.
\end{align}
subject to the boundary conditions
\begin{align}\label{eq:bc}
a_i \nabla u_i \cdot \nu = 0, \quad  a_e \nabla u_e \cdot \nu = 0  \quad \mathrm{on}  \quad  (0 , \infty) \times \partial G,
\end{align}
and the initial data
\begin{align}\label{BDESini}
u(0) = u_0, \quad w(0) = w_0  \quad \mathrm{in}  \quad G.
\end{align}
Here $G \subset \R^d$, $d=2,3$, denotes a domain, the functions $u_i$ and $u_e$ model the  intra- and extracellular electric potentials, $u$
 the transmembrane potential, and $\nu$ the outward unit normal vector to $\partial G$. The anisotropic properties of this system  are described by the conductivity matrices
$a_{i}(x)$ and $a_{e}(x)$. Furthermore, $I_i$ and $I_e$ stand for the intra- and extracellular stimulation currents, respectively. Concerning the ionic transport, we consider here the most 
classical model by FitzHugh--Nagumo, which reads
\begin{align*}
f(u,w)&= u(u-a)(u-1)+w = u^3-(a+1)u^2+au+w , \\
g(u,w)&= bw - cu,
\end{align*}
where $0<a<1$ and $b$, $c > 0$ are constants.

In this article we consider the {\em stochastic} bidomain equations subject to {\em current noise}, i.e., with a stochastic forcing term for the membrane potential modelled by a 
cylindrical Wiener process $W$ and a given function $h$.  This system is given by
\begin{align}\tag{S-BDE}\label{BDES}
\left\{
\begin{aligned}
\d u + f(u,w) dt - \nabla \cdot (a_i \nabla u_i)dt &= I_i \d t+ h\d W & \mathrm{in} & ~ (0 , \infty) \times G,  \\
\d u + f(u,w) dt + \nabla \cdot (a_e \nabla u_e)dt &= - I_e \d t + h\d W& \mathrm{in} & ~ (0 , \infty) \times G,  \\
\partial_t w + g(u,w) &= 0 & \mathrm{in} & ~ (0 , \infty) \times G,  \\
u_i-u_e&= u & \mathrm{in} & ~ (0 , \infty) \times G \\
\end{aligned}
\right.
\end{align}
subject to the above boundary condition \eqref{eq:bc} and the initial conditions \eqref{BDESini}.
Adding a stochastic term to the equations \eqref{BDES} for the membrane potential $u$ is usually called {\em current noise}, see \cite{GS11} for details. It represents the effect of random 
activity of ion channels on the voltage dynamics. In the case where $h$ depends on $u$, we arrive at the bidomain equations with {\em conductance noise}, see \cite{GS11}. We could, of course, add also 
Gaussian white noise to the ODE describing the evolution of the gating variable $w$ in \eqref{BDES} yielding the stochastic differential equation 
$$   
\d w = g(u,w) \d t  + h \d W.
$$  
This type of noise is called {\em subunit noise} in \cite{GS11}. In this article we concentrate however on the case of current noise, i.e., on equation \eqref{BDES}.

The rigorous mathematical analysis of the {\em deterministic} system was pioneered by  Colli Franzone and Savar\'e~\cite{CS00}, who introduced a variational 
formulation of the problem, showed global existence and uniqueness of weak  solutions in dimension $d=3$ for the FitzHugh-Nagumo ionic transport. Veneroni~\cite{Ven09} extended the latter result to 
more general models for the ionic fluxes. For optimal control results of the bidomain problem with various ionic transport laws we refer to \cite{KuWa14}.

Bourgault, Cordi\`ere, and Pierre presented in~\cite{BCP09} a new approach to this system by introducing for the first time the so-called bidomain operator within the $L^2$-setting. They
showed that it is a self-adjoint, positive, semi-definite operator, and proved existence and uniqueness of a local strong solution as well as the  existence of a global weak solution to the system, for
various classes of ionic models, including the one by FitzHugh--Nagumo. Giga and Kajiwara~\cite{GK16} recently gave a new stimulus to the investigation of this system by considering the bidomain 
equations within the $L^q$-setting for $q \in (1,\infty]$. They showed that the bidomain operator is the generator of an analytic semigroup on $L^q(G)$ for $q \in (1,\infty]$ and constructed a unique
local, strong solution to the bidomain system within this setting. 

The deterministic bidomain equations were studied recently also by Pr\"u{\ss} and the first author in \cite{HieberPruess} and \cite{HieberPruesslinear}. 
Using the theory of critical spaces, they proved, roughly speaking,  that the bidomain equations admit a {\em unique, global strong solution} in the two-dimensional setting for $u_0 \in L^2(G)$ and 
$w_0 \in L^q$ for $q \in [2,\infty)$ and in the three-dimensional situation for $u_0 \in H^{1/2,2}(G)$ and $w_0 \in L^q$ for  $q \in [2,\infty)$. Their approach was based on  rewriting  the bidomain 
equations as a semilinear evolution equation in  $X_0:= L^q(G)^2$ as
\begin{equation}\label{eq:bdabstract}
\partial_t v + Av = F(v),\quad t>0,\quad v(0)=v_0,  \quad A:=\left[\begin{array}{cc} \aA+a&1\\ -c&b\end{array}\right],  
F(v)=\left[\begin{array}{c}
-u^3+(a+1)u^2 \\0 \end{array}\right],
\end{equation}
where $v=[u,w]^{\sf T}$.

It is the aim of this article to show the existence of a {\em unique, global, strong} solution to the stochastic bidomain equation \eqref{BDES} in the pathwise sense for initial data 
belonging to certain critical spaces. 

Note that there are only very few results known on the deterministic or stochastic bidomain equation until today. This might be due to the fact, that the underlying 
bidomain operator $\aA$ is a highly {\em non local} operator. It is also interesting to compare the (deterministic or stochastic) bidomain system with the FitzHugh--Nagumo reaction diffusion system, where 
$\aA$ is replaced by a second order elliptic operator as the negative Laplacian $-\Delta$. In the latter case, there is a maximum principle, which yields-- by the method of invariant rectangles-- 
global existence of unique, strong solutions at least in the deterministic setting in all space dimensions. Since it is not known whether the bidomain operator $\aA$ has a maximum principal, we have 
to resort on different methods.          

Our approach can be described as follows: we first rewrite system  \eqref{BDES} with an additive noise as a semilinear stochastic evolution equation of the form   
\begin{align}\label{eq:bidomainstochaddnoise}
&\d U+ A U \dt =F(U) \dt + H \d W,
\end{align}
where $W$ is a cylindrical Wiener process, $A$ the non local operator defined in \eqref{eq:bdabstract} and, focusing on current noise (see \cite{GS11}), we assume that $H$ is of the form $H(t)=(h(t),0)^T$ for given $h$.
Secondly, we investigate the linearized system with linear noise 
\begin{align*}
&\d Z+ AZ \dt = H \d W,
\end{align*}
in the ground space $L^q(G)$ by the results on maximal stochastic regularity due to Van Neerven, Veraar and Weiss \cite{NeervenVeraarWeiss}. The latter are applicable due to the fact 
that $\aA$ admits a bounded $H^\infty$-calculus in $L^q(G)$, see \cite{HieberPruesslinear} or Proposition \ref{bidomopthm} below.  Thirdly, we consider  pathwise the remainder term $V:=U-Z$ for $Z$ which solves 
the system 
\begin{align}\label{eq:bidomainpathwiseaddnoise}
&\partial_t V+ AV =F(V+Z).
\end{align}
The maximal regularity properties of $Z$ allow us to  regard \eqref{eq:bidomainpathwiseaddnoise} as a deterministic, nonautonomous,  semilinear evolution equation. Then, extending the theory of 
critical spaces for semilinear equations developed originally by Pruess, Simonett and Wilke (cf. \cite{PruessSimonettWilke}) to the nonautonmous situation, we are able to 
prove the existence of a {\em global, strong} solution to \eqref{eq:bidomainstochaddnoise} for initial data belonging to critical spaces.  Observe that local existence results for smooth initial 
data could be achieved by standard arguments, however,  this is not the case for global existence results {\em without} smallness assumptions on the data.  The latter are related to \textit{a 
priori} estimates on the solution derived in Section~\ref{sec:global} and on estimates on the maximal existence interval $(0,T_{\max})$ of the local solution in certain {\em critical norms}. More 
precisely, we have 
\begin{align}\label{eq:global}
T_{\max}<\infty \Leftrightarrow \lim_{t\to T_{\max}} V(t) \text{ does not exist in } X_{\mu,p},
\end{align}
where $X_{\mu,p}$ denotes the interpolation space defined in Section 2.2. The fact that the bidomain operator $\aA$ admits a bounded $H^\infty$-calculus within the $L^q$-setting allows us to identify  
these interpolation spaces 
explicitly as Besov spaces.
Usual energy estimates are unfortunately {\em not} enough to relate the typical  energy norm estimates to these 
critical spaces and to apply \eqref{eq:global}. Our strategy is then to     
apply 
the theory of interpolation-extrapolation scales, cf. \cite[Section V.1]{Ama95}, to shift equation \eqref{eq:bidomainpathwiseaddnoise} from the ground space $X$ to suitable extrapolation spaces  
$X_{-1/2}$ or $X_{-1/4}$  of negative order, where depending on the space dimension the corresponding shifted interpolation spaces  $X_{\mu,p}$ can be related to the energy norms. 

 The \emph{stochastic} system \eqref{BDES} was investigated only very recently by Bendahmane and Karlsen \cite{BK19} within the context of martingale solutions. More precisely,
they established  the existence of a weak martingale solution to \eqref{BDES} for data in $L^2(\Omega,\cA,P;L^2(G))$ by means of an associated nondegenerate system and the Galerkin method. Moreover, they showed that 
equation \eqref{BDES} possesses a unique, weak solution provided the initial data  belong to $L^q(\Omega,\cA,P;L^2(G))$ for $q>9/2$. Whereas the results in \cite{BK19} can  be viewed within the PDE perspective as weak solutions lying in $H^{1,2}$, we are concerned with strong solutions to \eqref{eq:bidomainpathwiseaddnoise}
with 
\begin{align*}
V\in H^{1,p} ((\delta,T); L^q(G)) \cap L^p((\delta,T); H^{2,q}_N(G)) \times H^{1,p} ((\delta,T); L^q(G)), \quad \delta \in (0,T),
\end{align*}
for any $T>0$ and suitable parameters $p$ and $q$. We hence obtain a unique, global solution $(u,w)$ to the original equation \eqref{BDES} in the corresponding regularity class. Note that our 
approach using the bidomain operator circumvents the difficulties arising in the degenerate system treated in \cite{BK19}.

Abstract stochastic semi- and quasilinear evolution equation of the form
\begin{align}\label{eq:quasilinear}
	&\d u+ A(u) u \dt =F(u) \dt + H(u) \d W,
\end{align}
have been considered before by many authors, see e.g., \cite{FNS20, Roeckner, Cho14, Hornung, NeervenVeraarWeiss2, DaPZ92}. In fact, strong well-posedness 
results for \eqref{eq:quasilinear} were shown by 
van Neerven, Veraar and Weis \cite{NeervenVeraarWeiss2} as well as by Hornung \cite{Hornung} under Lipschitz conditions on $F$ and $H$.  Their results imply local existence results for  
\eqref{eq:bidomainstochaddnoise}. However, these results seem not to be applicable for obtaining global solution here. Our approach using the theory of critical spaces, allows us to apply the 
blow-up criteria for deterministic systems by relating critical spaces to energy norms. To this end, we need to shift our setting to suitable extrapolation spaces of negative order.

\section{Preliminaries}\label{sec:pre}
Throughout this article, $G \subset \R^d$ denotes a domain and $\Omega$ a probability space. For $1 \leq p \leq \infty$ and a Banach space $X$ let $L^p(G;X)$ be the Bochner space equipped with the norm
\begin{align*}
\|f\|_{L^p(G;X)}^p =\begin{cases}
\int_{G} \|f(x)\|^p_X \d x, & 1\leq p<\infty,\\
\esssup_{x\in G}\|f(x)\|_X, & p=\infty.
\end{cases}
\end{align*}
For $p=2$ and $X$ being a Hilbert space, the space $L^2(G;X)$ is a Hilbert space 
with scalar product $\left< f,g \right>=\int_{G} \left< f(x),g(x)\right>_X \d x $. 

\subsection{The Deterministic Bidomain Operator and Equation} \mbox{}\\
Here, we give a precise definition of the bidomain operator within the $L^q$-setting. To this end, let $G\subset\R^d$ be a bounded domain with 
boundary $\Gamma:=\partial G\in C^{2-}$.  We then define formally a pair of differential operators
$$\cA_k(x,D):= -{\rm div}( a_k(x)\nabla)=-\partial_i(a^{ij}_k(x)\partial_j),\quad x\in G,\quad k=1,2,$$
where the coefficient functions $a_k= (a^{ij}_k(\cdot))$ are given, and we employ the Einstein summation convention. Moreover, define the pair of boundary operators $\cB_k(D)$   by means of
$$
\cB_k(x,D) = \nu(x)\cdot a_k(x)\nabla = \nu_i(x) a_k^{ij}(x)\partial_j,\quad x\in \Gamma,\quad k=1,2,
$$
where $\nu(x)=[\nu_1(x), \ldots, \nu_d(x)]^T$ is the outer normal vector on $\Gamma$ at $x$.
Now, we introduce the following  assumptions on the coefficient functions:
\begin{align}\tag{\bf{BD}}\label{BD}
\begin{aligned}
(a) \quad & a_k \in W^{1,\infty}(G;\R^{d\times d}) \hbox{ are symmetric and uniformly positive definite on $\overline{G}$ for $k=1,2$.} \\
(b)\quad&  \hbox{There is a function $\gamma:\Gamma\to\R$ such that } \nu(x)\cdot a_2(x)=\gamma(x) \nu(x)\cdot a_1(x), \quad x\in\Gamma.
\end{aligned}
\end{align}
Note that condition (a) and (b) imply that for some $\gamma_0>0$
$$ \gamma(x) = \frac{\left<a_2(x)\nu(x),\nu(x)\right>}{\left<a_1(x)\nu(x),\nu(x)\right>}\geq \gamma_0>0,\quad x\in \Gamma,$$
and hence $\gamma\in W^{1,\infty}(\Gamma)$. 
It was observed in \cite[Remark 2.1 a)]{HieberPruesslinear} that condition b) is quite generic. From now on we always assume assumption (BD) to hold true. 

We proceed by introducing the spaces
$$
L_{0}^q(G):=\{ u\in L^q(G):\, \bar{u}=0\}, \quad \mbox{and} \quad H^{s,q}_{0}(G) = H^{s,q}(G)\cap L_{0}^q(G),\; s>0,
$$
where $\bar{u}:=\int_G u$ is the  mean value of $v$. Now, we define two operators $A_k$, $k=1,2$, in the base space $X_0:=L_{0}^q(G)$
by 
means of
\begin{equation}\label{def:Ak}
A_k u := \cA_k(\cdot,D) u,\quad \cD(A_k)=\{u\in H^{2,q}_{0}(G):\, \cB_k(\cdot,D) u =0 \mbox{ on } \Gamma\}.
\end{equation}
It is well-established that under condition (BD)(a) and (b), $A_k$ for $k=1,2$ is sectorial, boundedly invertible and admits an $\cH^\infty$-calculus with $\cH^\infty$-angle 0; see e.g. \cite{DDHPV}.
The conditions (BD) yields in particular 
\begin{align*}
\cD(A_1)=\cD(A_2)=:X_1 \quad \hbox{as well as} \quad \cD(A_k^\alpha)=(X_0,X_1)_\alpha \quad \hbox{for } k=1,2, \quad \alpha \in (0,1),
\end{align*}
where $(\cdot,\cdot)_\alpha$ denotes the complex interpolation functor.
The {\em bidomain operator} $\aA$ in $X_0$ is then defined as
\begin{equation}\label{bidomop}
\aA:= (A_1^{-1}+A_2^{-1})^{-1}, \quad \cD(\aA):=X_1.
\end{equation}
Note that $\aA= A_1(A_1+A_2)^{-1}A_2 = A_2(A_1+A_2)^{-1}A_1$. 

\begin{proposition}[Properties of the bidomain operator, cf. \cite{HieberPruesslinear}]\label{bidomopthm}
Let $1<p,q<\infty$, $X_0=L_{0}^q(\Omega)$, and assuming (BD) let the bidomain operator $\aA$ be defined as in \eqref{bidomop}. 
Then the following assertions are true: 
\begin{enumerate}[(a)]
	\item $\aA$ is sectorial and boundedly invertible in $X_0$.
	\item  $\aA$ admits a bounded $H^\infty$-calculus on $X_0$ of angle $0$, i.e. $\aA\in \cH^\infty(X_0)$ with $\phi^\infty_{\aA}=0$.
	\item The Cauchy problem associated with $\aA$  
		has maximal $L^p$-$L^q$-regularity on $\R_+$.
	\item For $z \in \C\setminus(0,\infty)$, the resolvent $(z-\aA)^{-1}$ of $\aA$ is a compact operator on $X_0$, and thus the  spectrum $\sigma(\aA)$ of $\aA$ consists only of eigenvalues with finite
	algebraic multiplicity.
	\item $-\aA$ generates a strongly continuous, compact, analytic and exponentially stable semigroup on $X_0$.
	\item $\cD(\aA^\alpha)=\cD(A_k^\alpha)=(X_0,X_1)_\alpha$ for $\alpha\in(0,1)$ and $k=1,2$.
\end{enumerate}
\end{proposition}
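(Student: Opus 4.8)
The plan is to regard the bounded $H^\infty$-calculus in (b) as the main point and to deduce (a) and (c)--(f) from it together with the compact embedding $X_1 \hookrightarrow X_0$ and the structural identities for $\aA$. Throughout one uses that, under (BD), the operators $A_1, A_2$ from \eqref{def:Ak} share the domain $X_1 = H^{2,q}_{0}(G)$, are boundedly invertible and sectorial with $\cH^\infty(X_0)$-calculus of angle $0$, and that $\aA = A_1(A_1+A_2)^{-1}A_2 = (A_1^{-1}+A_2^{-1})^{-1}$ with $\cD(\aA) = X_1$ (cf. \eqref{bidomop}). In particular $\aA^{-1} = A_1^{-1}+A_2^{-1}$ is bounded on $X_0$, injective and has range $X_1$, so $0 \in \rho(\aA)$; this already yields half of (a). A further elementary observation, used below, is that since $\cD(A_1) = \cD(A_2) = X_1$ the operators $A_1 A_2^{-1}$ and $A_2 A_1^{-1}$ are bounded on $X_0$, so $\aA$ can also be written as $\aA = \bigl(I + A_2 A_1^{-1}\bigr)^{-1}A_2$.

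The \emph{main obstacle} is (b). The difficulty is that $A_1$ and $A_2$ do not commute, so the Kalton--Weis sum theorem for the $H^\infty$-calculus of $A_1 + A_2$ is not directly available. This is precisely where condition (BD)(b) is used: since $\nu\cdot a_2 = \gamma\,\nu\cdot a_1$ on $\Gamma$ with $\gamma \in W^{1,\infty}(\Gamma)$, $\gamma \ge \gamma_0 > 0$, the co-normal boundary operators $\cB_1$ and $\cB_2$ are proportional on $\Gamma$; this forces the common domain $X_1$ and, more importantly, permits one to realise $A_2$ as a perturbation --- of order strictly less than the order of $A_1$, in the sense of relative boundedness with respect to $A_1^\beta$ for some $\beta < 1$ --- of an operator built from $A_1$ and a $W^{1,\infty}$-extension of $\gamma$ that does commute with $A_1$ in the resolvent sense. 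One then obtains the bounded $H^\infty$-calculus of $\aA$ (with angle $0$, since $A_1, A_2$ have angle $0$) by combining the joint $H^\infty$-calculus of the commuting model operators with the perturbation theorem for the $H^\infty$-calculus under such lower-order perturbations. This is the content of the corresponding result in \cite{HieberPruesslinear}, to which the proposition refers, and I would follow that argument.

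Granting (b), the remaining statements are routine. Assertion (a) is complete, since sectoriality is part of (b) and $0\in\rho(\aA)$ was noted above. For (f): a bounded $H^\infty$-calculus implies bounded imaginary powers, hence $\cD(\aA^\alpha) = (X_0,X_1)_\alpha$ by complex interpolation; the same holds for $A_k$, so $\cD(\aA^\alpha) = \cD(A_k^\alpha) = (X_0,X_1)_\alpha$ as recorded before the proposition. For (c): $L_{0}^q(G)$ has property $(\alpha)$, so a bounded $H^\infty$-calculus of angle $0$ gives $\cR$-sectoriality of angle $0$ and hence, by the Weis theorem, maximal $L^p$-regularity on bounded intervals; combined with $0\in\rho(\aA)$ (equivalently, the exponential stability from (e)) this upgrades to maximal $L^p$-$L^q$-regularity on $\R_+$. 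For (d): since $G$ is bounded with $\Gamma\in C^{2-}$, the embedding $X_1 = H^{2,q}_{0}(G)\hookrightarrow L_{0}^q(G) = X_0$ is compact by Rellich--Kondrachov, so $\aA^{-1}$ is compact; by the resolvent identity $(z-\aA)^{-1}$ is then compact for every $z\in\rho(\aA)\supseteq\C\setminus(0,\infty)$, whence $\sigma(\aA)$ is a discrete set of eigenvalues of finite algebraic multiplicity. Finally, for (e): $-\aA$ generates an analytic $C_0$-semigroup by (a) and (b), compact by (d) together with analyticity; and since $\aA$ is sectorial of angle $0$ with $0\in\rho(\aA)$, its spectrum lies in a region $\{z : |\arg z|\le\theta,\ |z|\ge\delta\}$ with $\theta < \pi/2$, so $\operatorname{Re}\sigma(\aA)\ge\delta\cos\theta > 0$; as the growth bound of an analytic semigroup equals its spectral bound, $e^{-t\aA}$ is exponentially stable.
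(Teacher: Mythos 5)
Your proposal is correct and is consistent with the paper, which gives no proof of this proposition at all but simply cites \cite{HieberPruesslinear}; you likewise defer the essential point, the bounded $H^\infty$-calculus in (b), to that reference, and your derivations of (a) and (c)--(f) from (b) together with $0\in\rho(\aA)$ and the compact embedding $X_1\hookrightarrow X_0$ are the standard arguments one would supply. The only minor imprecision is writing $X_1=H^{2,q}_{0}(G)$ where the domain also carries the co-normal boundary condition, which does not affect any of your conclusions.
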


Extending $\aA$ trivially to $L^q(G)$  with domain $\cD(\aA)\oplus \hbox{span}\{1\}$, where with a slight abuse of notation the resulting operator is still denoted as bidomain operator $\aA$, the deterministic bidomain problem 
\eqref{BDE}
subject to the boundary conditions \eqref{eq:bc} and the initial data \eqref{BDESini}
can be reformulated as the system
\begin{align}\label{BDE1}
\begin{split}
\partial_t u+f(u,w) + \aA u &= 0,\quad t>0,\quad u(0)=u_0,\\
\partial_t w+g(u,w) &= 0,\quad t>0,\quad w(0)=w_0.
\end{split}
\end{align} 
The ionic transport is modeled by the classical FitzHugh--Nagumo equations, which are formulated as 
\begin{align*}
f(u,w)= u^3-(a+1)u^2+au+w \quad \hbox{and}\quad
g(u,w)= bw - cu,
\end{align*}
where $0<a<1$ and $b$, $c > 0$ are constants. Setting  $v:=[u,w]^{\sf T}$, $F(v):=[-u^3+(a+1)u^2,0]^{\sf T}$ and the matrix operator $A$ defined in the strong stetting in the base space $X_0^s:= L^q(G)^2$ by
\begin{equation}\label{def:A}
A:=\left[\begin{array}{cc} \aA+a&1\\ -c&b\end{array}\right], \quad \cD(A)= \cD(\aA) \times L^q(G)=:X_1^s,
\end{equation}
we obtain the formulation of \eqref{BDE1} as the semilinear evolution equation in $X_0^s$
\begin{equation}\label{sl-ev-eq}
\partial_t v + Av = F(v),\quad t>0,\quad v(0)=v_0.
\end{equation}
It should be noted that the classical FitzHugh-Nagumo system appears as a special case of this equation, assuming $u,w$ to be spatially constant.

\begin{remark}\label{rem:A}
In \cite{HieberPruess} it has been shown that the properties of $\aA$ given in Proposition~\ref{bidomopthm} except compactness of the resolvent carry over to $A$.	
\end{remark}
As a consequence of Remark~\ref{rem:A}, we see that the complex interpolation spaces and the fractional power domains of $A$ satisfy
\begin{equation}\label{eq:interpolation} 
\cD(A^\alpha)=(X_0^s,X_1^s)_\alpha = \cD((\aA)^\alpha)\times L^q(G),\quad \cD((\aA)^\alpha)=H^{2\alpha,q}_{N}(G),
\end{equation}
where the subscript $N$ indicates Neumann-type boundary conditions $\cB_k(\cdot,D)u=0$, which by \eqref{BD} is independent of $k=1,2$, 
whenever this trace exists.
More concretely, we have
\begin{align*}
H^{2\alpha,q}_{N}(G) =\begin{cases}
H^{2\alpha,q}(G), & 0<\alpha< 1/2+1/2q, \\
\{ u\in H^{2\alpha,q}(G)\colon \cB_k(\cdot,D)u=0 \mbox{ on }\Gamma\},  & 1/2+1/2q<\alpha<1.
\end{cases}
\end{align*}
The real interpolation spaces $D_A(\alpha,p)=(X_0^s,X_1^s)_{\alpha, p}$ for $p\in (1,\infty)$ and $\alpha \in (0,1)$ satisfy the relation
$$ D_A(\alpha,p) = B^{2\alpha}_{qp,N}(G)\times L^q(G),$$
where 
\begin{align*}
B^{2\alpha}_{qp,N}(G) =\begin{cases}
B^{2\alpha}_{qp}(G), & 0<\alpha< 1/2+1/2q, \\
\{ u\in B^{2\alpha}_{qp}(G)\colon \cB_k(\cdot,D)u=0 \mbox{ on }\Gamma\},  & 1/2+1/2q<\alpha<1.
\end{cases}
\end{align*}
Applying Amann's theory of interpolation-extrapolation scales \cite[Section V.1]{Ama95}, we see that the same properties of $\aA$ given in Proposition~\ref{bidomopthm}, and hence also those of $A$ by Remark~\ref{rem:A}, carry over 
to the spaces
\begin{align*}
X_0^{\sf w_1} &= H^{-1,q}(G)\times L^q(G),\quad X_1^{\sf w_1} =H^{1,q}(G)\times L^q(G),\quad  H^{-1,q}(G)=H^{1,q^\prime}(G)^*, \\
X_0^{\sf w_2} &= H^{-1/2,q}(G)\times L^q(G),\quad X_1^{\sf w_2}  =H^{3/2,q}_{N}(G)\times L^q(G),\quad  H^{-1/2,q}(G)=H^{1/2,q^\prime}(G)^*,
\end{align*}
where $\tfrac{1}{q}+\tfrac{1}{q^\prime}=1$.

\subsection{Nonautonomous Semilinear Parabolic Evolution Equations} \mbox{}\\
In contrast to the analysis of the deterministic bidomain equations as described in \cite{HieberPruess} and \cite{HieberPruesslinear}, the stochastic setting forces us to consider nonlinearities which 
are explicitly time dependent. To this end, we adapt the existence, uniqueness and stability results due to  Pr\"u{\ss}, Simonett and Wilke \cite{PruessSimonettWilke} 
to our nonautonomous situation. More precisely, for given functions $F_1,F_2$ consider the semilinear parabolic evolution equation
\begin{align}\label{eq:semilineardeterministic}
\begin{split}
\partial_t v + A v & =F_1(\cdot,v)+F_2(\cdot,v),\\
v(0)&=v_0
\end{split}
\end{align}
in  time weighted Sobolev spaces, which for $p \in (1,\infty)$, $\mu\in (1/p,1]$, a time interval $J \subset [0,\infty)$, and a Banach space $X$ 
are defined by  
\begin{align*}
L^{p}_{\mu}(J;X)&:=\{v \in L^1_{loc}(J;X): t^{1-\mu}u\in L^{p}(J;X)\} \mbox{ and } \\
H^{1,p}_{\mu}(J;X)&:=\{v\in L^{p}_{\mu}(J;X) \cap H_{loc}^{1,1}(J;X): t^{1-\mu}\partial_t v \in L^p(J;X)\}.
\end{align*}
Assume that $X_1,X_0$ be Banach spaces such that $X_1$ is densely embedded into $X_0$ and that $V_{\mu,p}$ is an open subset of the real interpolation space 
$$
X_{\mu,p}:=(X_0,X_1)_{\mu-1/p,p}, \quad \mu \in (1/p,1]. 
$$
The complex interpolation space is denoted by $X_{\beta}:=(X_0,X_1)_{\beta}$ for $\beta \in (0,1)$.
 For $1<p<\infty$ and  $1/p<\mu\leq 1$ we now introduce the following assumptions (A1)-(A3). 

\vspace{.1cm}\noindent
{\bf (A1):} Assume that 
\begin{align*}
L^{p}((0,T);X_{1}) \cap H^{1,p}((0,T);X_{0}) \hookrightarrow H^{1-\beta,p}((0,T);X_{\beta}), \quad T>0.
\end{align*}
Note, that this embedding holds true if there exists  a bounded operator $A: X_1\to X_0$ which admits a bounded $H^{\infty}$-calculus of angle strictly less than $\pi/2$.

\vspace{.1cm}\noindent
{\bf (A2):} Assume that $A: X_1 \to X_0$ is bounded and has maximal $L^p$-regularity.

\vspace{.1cm}\noindent
{\bf (A3):} For $V_{\mu,p}\subset X_{\mu,p}$ open and $T>0$  
\begin{align*}
F_1  \colon  [0,T] \times V_{\mu,p} \to X_0, \; F_2  \colon  [0,T] \times V_{\mu,p} \cap X_{\beta} \to X_0,
\end{align*}
satisfy
\begin{align*}
F_1(\cdot,v) \in L^{p}_{\mu}((0,T);X_0)  & \text{ for all }v\in C([0,T];V_{\mu,p}),\\
F_2(\cdot,v) \in  L^{p}_{\mu}((0,T);X_0) & \text{ for all }v\in C([0,T];V_{\mu,p})\cap H^{1-\beta,p}_{\mu}((0,T);X_{\beta}),
\end{align*}
\begin{align}\label{eq:estimatef1}
\|F_1(t,v_1)-F_1(t,v_2)\|_{X_0} \leq C\|v_1-v_2\|_{X_{\mu,p}} \quad \hbox{for } v_1,v_2\in V_{\mu,p},
\end{align}
and for $m\in\N$, $\rho_j\geq 0$, $\beta\in(\mu-1/p,1), \beta_j\in(\mu-1/p,\beta]$
\begin{align}\label{eq:estimatef2}
\|F_2(t,v_1)-F_2(t,v_2)\|_{X_0} \leq C \sum_{j=1}^{m} \left(1+\|v_1\|^{\rho_j}_{X_{\beta}}+\|v_2\|^{\rho_j}_{X_{\beta}} \right)\|v_1-v_2\|_{X_{\beta_j}}
\quad \hbox{for } v_1,v_2\in  V_{\mu,p}\cap X_{\beta},
\end{align}
where the constants $C$ are {\em independent} of $t\in [0,T]$ in both estimates 
and where for all $j\in \{0,\ldots,m\}$
\begin{align*}
\rho_j\beta+\beta_j-1\leq \rho_j(\mu-1/p).
\end{align*}

Note that our assumptions are essentially as in \cite{PruessSimonettWilke}, however,  we allow here $F_1$ and $F_2$ to be explicitly time dependent. The constants $C$ appearing in the   
estimates \eqref{eq:estimatef1} and \eqref{eq:estimatef2} may depend on $T$ and must be uniform in $t\in [0,T)$.

Modifying the proof of   \cite[Theorem 2.1]{PruessSimonettWilke}, we obtain  the following result on the existence and uniqueness of local solutions to equation \eqref{eq:semilineardeterministic}.

\begin{proposition}\label{theorem:localexistencedeterministic}
Let $1<p<\infty$, $1/p<\mu\leq 1$, $V_{\mu,p}\subset X_{\mu,p}$ open, $T>0$, and assume (A1)-(A3). Then for $v_0\in V_{\mu,p}$ there exists  $T_{0} \in (0, T]$ and a unique solution 
\begin{align*}
v\in H^{1,p}_{\mu}((0,T_{0});X_0)\cap L^{p}_{\mu}((0,T_{0});X_1) \cap C([0,T_{0}];V_{\mu,p}).
\end{align*}
The solution extends onto a maximal existence interval $[0,T_{\max})$.
Furthermore, there is an $\varepsilon>0$ and a constant $C>0$ with  $\overline{B(v_0,\varepsilon)}\subset V_{p}$ such that for all $w_0 \in \overline{B(v_0,\varepsilon)}$ there is a unique solution 
$w$ of \eqref{eq:semilineardeterministic} with the same regularity as $v$ and 
\begin{align*}
\|v-w\|_{H^{1,p}_{\mu}((0,T_{0});X_0)}+\|v-w\|_{L^{p}_{\mu}((0,T_{0});X_1)}+\|v-w\|_{C([0,T_{0}];V_{\mu,p}}\leq C \|v_0-w_0\|_{X_\mu,p}.
\end{align*}
\end{proposition}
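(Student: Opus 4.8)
The plan is to carry over the contraction-mapping scheme of \cite[Theorem~2.1]{PruessSimonettWilke} to the nonautonomous situation, the only genuinely new feature being that the ``reference'' nonlinearities $F_i(\cdot,v_0)$ are no longer constant in time and must instead be controlled through the standing hypothesis $F_i(\cdot,v)\in L^p_\mu$ together with the absolute continuity of the Lebesgue integral. For $a\in(0,T]$ abbreviate $\mathsf{E}_\mu(a):=H^{1,p}_{\mu}((0,a);X_0)\cap L^{p}_{\mu}((0,a);X_1)$ and let ${}_0\mathsf{E}_\mu(a)$ be the closed subspace of functions vanishing at $t=0$. By (A2) and weighted maximal regularity, $v\mapsto(\partial_t v+Av,\,v(0))$ is an isomorphism from $\mathsf{E}_\mu(a)$ onto $L^{p}_{\mu}((0,a);X_0)\times X_{\mu,p}$, so in particular $\mathsf{E}_\mu(a)\hookrightarrow C([0,a];X_{\mu,p})$; crucially, restricted to ${}_0\mathsf{E}_\mu(a)$ this embedding -- as well as the embedding ${}_0\mathsf{E}_\mu(a)\hookrightarrow H^{1-\beta,p}_{\mu}((0,a);X_{\beta})$ coming from (A1) -- holds with a constant $\kappa$ \emph{independent of} $a$. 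Let $v_*\in\mathsf{E}_\mu(T)$ denote the solution of the homogeneous problem $\partial_t v_*+Av_*=0$, $v_*(0)=v_0$. Fix $\varepsilon>0$ with $\overline{B(v_0,\varepsilon)}\subset V_{\mu,p}$ and shrink the interval so that $v_*([0,a])\subset\overline{B(v_0,\varepsilon/2)}$. One then seeks the solution in the form $v=v_*+\bar v$ with $\bar v\in{}_0\mathsf{E}_\mu(a)$, so that \eqref{eq:semilineardeterministic} becomes the fixed-point equation $\bar v=\Phi_a(\bar v)$, where $\Phi_a(\bar v)$ is the unique element of ${}_0\mathsf{E}_\mu(a)$ solving $\partial_t w+Aw=F_1(\cdot,v_*+\bar v)+F_2(\cdot,v_*+\bar v)$, $w(0)=0$, with $\|\Phi_a(\bar v)\|_{\mathsf{E}_\mu(a)}\le M\,\|F_1(\cdot,v_*+\bar v)+F_2(\cdot,v_*+\bar v)\|_{L^p_\mu((0,a);X_0)}$ once the right-hand side is shown to lie in $L^p_\mu$.

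Next I would run Banach's fixed point theorem on $\Sigma_{r,a}:=\{\bar v\in{}_0\mathsf{E}_\mu(a):\|\bar v\|_{\mathsf{E}_\mu(a)}\le r\}$. For $\bar v\in\Sigma_{r,a}$ with $\kappa r\le\varepsilon/2$ one has $v_*+\bar v\in C([0,a];V_{\mu,p})$ and $v_*+\bar v\in H^{1-\beta,p}_{\mu}((0,a);X_\beta)$, so (A3) yields that the right-hand side lies in $L^p_\mu$. From \eqref{eq:estimatef1}, pointwise, $\|F_1(t,v_*+\bar v_1)-F_1(t,v_*+\bar v_2)\|_{X_0}\le C\|\bar v_1-\bar v_2\|_{X_{\mu,p}}$, so the $L^p_\mu((0,a))$-norm of this difference is at most $C\kappa\,\|1\|_{L^p_\mu(0,a)}\,\|\bar v_1-\bar v_2\|_{\mathsf{E}_\mu(a)}$ with $\|1\|_{L^p_\mu(0,a)}\simeq a^{1-\mu+1/p}\to0$. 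For $F_2$ one inserts \eqref{eq:estimatef2}, interpolates the $X_\beta$- and $X_{\beta_j}$-norms between $C([0,a];X_{\mu,p})$ and $H^{1-\beta,p}_{\mu}((0,a);X_\beta)$, and uses Hölder in time; the scaling condition $\rho_j\beta+\beta_j-1\le\rho_j(\mu-1/p)$ makes all the exponents admissible and yields
\[
\|F_2(\cdot,v_*+\bar v_1)-F_2(\cdot,v_*+\bar v_2)\|_{L^p_\mu((0,a);X_0)}\le C\sum_{j=1}^{m}\bigl(a^{\delta_j}+\|v_*\|^{\rho_j}_{H^{1-\beta,p}_{\mu}((0,a);X_\beta)}+r^{\rho_j}\bigr)\|\bar v_1-\bar v_2\|_{\mathsf{E}_\mu(a)},
\]
with $\delta_j\ge0$. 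Since $\|v_*\|_{H^{1-\beta,p}_{\mu}((0,a);X_\beta)}\to0$ as $a\to0$, one first chooses $r$ so small that $C\sum_j r^{\rho_j}\le\tfrac14$ and then $a$ so small that the remaining contributions are $\le\tfrac14$ too; this makes $\Phi_a$ a strict contraction on $\Sigma_{r,a}$. For the self-map property, applying the contraction estimate with second argument $0\in\Sigma_{r,a}$ gives $\|\Phi_a(\bar v)\|_{\mathsf{E}_\mu(a)}\le M\,\|F_1(\cdot,v_*)+F_2(\cdot,v_*)\|_{L^p_\mu((0,a);X_0)}+\tfrac12 r$, and since $F_i(\cdot,v_*)\in L^p_\mu((0,a);X_0)$ by (A3), absolute continuity of the integral makes the first term $\le r/(2M)$ for $a$ small; hence $\Phi_a(\Sigma_{r,a})\subset\Sigma_{r,a}$. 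Banach's theorem now gives a unique fixed point $\bar v$, i.e.\ a unique solution $v=v_*+\bar v$ on $[0,T_0]$, $T_0:=a$, in the asserted regularity class. Uniqueness in the full solution class follows by a connectedness argument: two solutions agree on a (possibly shorter) subinterval by the contraction property, and the set of times where they agree is then relatively open and closed in their common interval of existence.

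For the remaining assertions, observe that after any time shift the problem is again of the type covered by (A1)--(A3) with the \emph{same} structural constants (uniform on $[0,T]$ by hypothesis), so the local solution can be continued as long as its trace in $X_{\mu,p}$ stays in $V_{\mu,p}$, giving the maximal interval $[0,T_{\max})$. For continuous dependence, the initial datum enters the fixed-point problem only through the \emph{bounded linear} map $v_0\mapsto v_*$ into $\mathsf{E}_\mu(T)$ (the initial value of $\bar v$ being always $0$); choosing $\varepsilon$ and $T_0$ so that the smallness requirements above hold uniformly for $w_0\in\overline{B(v_0,\varepsilon)}$, the fixed points $\bar v^{\,v_0},\bar v^{\,w_0}$ satisfy $\|\bar v^{\,v_0}-\bar v^{\,w_0}\|_{\mathsf{E}_\mu(T_0)}\le 2\,\|\Phi_{T_0}^{\,v_0}(\bar v^{\,w_0})-\Phi_{T_0}^{\,w_0}(\bar v^{\,w_0})\|_{\mathsf{E}_\mu(T_0)}$, and the right-hand side is bounded, via \eqref{eq:estimatef1}--\eqref{eq:estimatef2} and $\|v_*^{\,v_0}-v_*^{\,w_0}\|_{\mathsf{E}_\mu(T_0)}\le C\|v_0-w_0\|_{X_{\mu,p}}$, by $C\|v_0-w_0\|_{X_{\mu,p}}$; adding the contribution of $v_*^{\,v_0}-v_*^{\,w_0}$ yields the stated Lipschitz estimate for $v-w$.

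The step I expect to be the main obstacle is, exactly as in \cite{PruessSimonettWilke}, the $F_2$-estimate in the displayed inequality: squeezing the $L^p_\mu$-norm of the superquadratic differences small relies essentially on the mixed-derivative embedding (A1) together with the sharp scaling relation $\rho_j\beta+\beta_j-1\le\rho_j(\mu-1/p)$ -- precisely the condition that makes $X_{\mu,p}$ a \emph{critical} space for \eqref{eq:semilineardeterministic} -- and the bookkeeping of all the interpolation exponents, together with the $a$-uniformity of the embedding constants on ${}_0\mathsf{E}_\mu(a)$, is delicate. By contrast, the genuinely nonautonomous input -- replacing the elementary bound $\|F_i(v_0)\|_{L^p_\mu(0,a)}\simeq a^{1-\mu+1/p}\|F_i(v_0)\|_{X_0}$ of the autonomous case by an absolute-continuity argument for the honest $L^p_\mu$-functions $F_i(\cdot,v_0)$ and $F_i(\cdot,v_*)$, and keeping track of the uniformity in $t$ of the constants in (A3) -- is comparatively routine.
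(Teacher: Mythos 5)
Your argument is exactly the route the paper takes: it states that Proposition~\ref{theorem:localexistencedeterministic} is obtained by ``modifying the proof of \cite[Theorem 2.1]{PruessSimonettWilke}'', and your proposal is precisely that modification spelled out --- the weighted maximal-regularity fixed-point scheme on ${}_0\mathsf{E}_\mu(a)$, with the nonautonomous reference terms $F_i(\cdot,v_*)$ controlled via the $t$-uniformity of the constants in (A3) and absolute continuity of the integral. The reconstruction is correct, including the treatment of the critical scaling condition in the $F_2$-estimate and the continuous-dependence bound.
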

The maximal existence time is characterized by
\begin{align*}
	T_{max}:=\sup\{T>0\colon v\in H^{1,p}_{\mu}((0,T);X_0)\cap L^{p}_{\mu}((0,T);X_1) \cap C([0,T];V_{\mu,p})	
	\hbox{ solves \eqref{eq:semilineardeterministic}} \}.
\end{align*}
If $\rho_j\beta+\beta_j-1 < \rho_j(\mu-1/p)$ we call $j$ \textit{subcritical}, in the case of equality we call it \textit{critical}, and  we define the \emph{critical weight} 
\begin{align*}
	\mu_c:= \tfrac{1}{p} + \beta - \min_{j} (1-\beta_j)/\rho_j.
\end{align*}	

\begin{remark}\label{rem:embedding}
	Note that for $T\in (0,\infty]$ one has the embedding
	\begin{align*}
	H^{1,p}_{\mu}((0,T);X_0)\cap L^{p}_{\mu}((0,T);X_1) \hookrightarrow BUC([0,T];X_{\mu,p}), 
	\end{align*}
	which allows one to take traces in time, 
	and  for any $\delta\in (0,T)$ one has the instantaneous smoothing
	\begin{align*}
	H^{1,p}_{\mu}((0,T);X_0)  \hookrightarrow
		H^{1,p}((\delta,T);X_0) \quad \hbox{and}\quad
		L^{p}_{\mu}((0,T);X_1) \hookrightarrow L^{p}((\delta,T);X_1).
	\end{align*}
\end{remark}

\begin{corollary}[cf. \cite{PruessSimonett} Corollary 5.1.2.]\label{corollary:continuation}
Let $V_{\mu,p}=X_{\mu,p}$ and $v$ be the solution of \eqref{eq:semilineardeterministic} given in Proposition \ref{theorem:localexistencedeterministic}. Then
If $T_{max}<\infty$, then 
one of the following alternatives occurs
\begin{itemize}
\item[i)] $\lim_{t\to T_{\max}} v(t)$ does not exist in $X_{\mu,p}$.
\item[ii)] $\lim \inf_{t\to T_{\max}} \mathrm{dist}(v(t),\partial V_{\mu,p})=0$.
\end{itemize}
	In the case $V_{\mu,p}=X_{\mu,p}$, this reduces to
\begin{align*}
T_{\max}<\infty \Leftrightarrow \lim_{t\to T_{\max}} v(t) \text{ does not exist in } X_{\mu,p}.
\end{align*}
\end{corollary}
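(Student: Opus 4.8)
The plan is to modify the proof of \cite[Theorem 2.1]{PruessSimonettWilke} (equivalently \cite[Corollary 5.1.2]{PruessSimonett}), tracking where the autonomy of $F_1,F_2$ is actually used and verifying that the time-uniformity built into (A3) suffices as a replacement. First I would establish the local existence statement of Proposition \ref{theorem:localexistencedeterministic}. Working in the maximal regularity space $\mathbb{E}(0,T_0):=H^{1,p}_{\mu}((0,T_0);X_0)\cap L^{p}_{\mu}((0,T_0);X_1)$, I would set up a fixed point map: solve $\partial_t v + Av = F_1(\cdot,v^*)+F_2(\cdot,v^*)$ with $v(0)=v_0$ for a given $v^*$ in a suitable ball, using (A2) (maximal $L^p$-regularity, which transfers to the weighted spaces $L^p_\mu$ by the standard Prüss--Simonett weighted-space theory). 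The embedding in (A1) guarantees $v^*\in H^{1-\beta,p}_\mu((0,T_0);X_\beta)$, so that $F_2(\cdot,v^*)$ is meaningful; the Lipschitz/polynomial estimates \eqref{eq:estimatef1}--\eqref{eq:estimatef2}, together with the subcriticality/criticality inequality $\rho_j\beta+\beta_j-1\le\rho_j(\mu-1/p)$, yield that this map is a contraction on a small ball in $C([0,T_0];\overline{B(v_0,\varepsilon)})\cap\mathbb{E}(0,T_0)$ for $T_0$ small enough --- exactly as in \cite{PruessSimonettWilke}, the only difference being that suprema in $t$ of the (now $t$-dependent) nonlinearities are controlled by the $t$-uniform constants $C$. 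This also gives the Lipschitz dependence on $v_0$ stated in the Proposition.

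Next I would iterate the local construction to obtain the maximal interval $[0,T_{\max})$: given a solution on $[0,T_1]$, its value $v(T_1)\in V_{\mu,p}$ (using the trace embedding of Remark \ref{rem:embedding}) serves as new initial datum, and by time-translation --- here one uses that (A3) is posed on all of $[0,T]$ with $t$-uniform constants, so the shifted nonlinearities $(t,v)\mapsto F_i(t+T_1,v)$ satisfy the same hypotheses --- one extends past $T_1$. Uniqueness on overlaps is inherited from the contraction argument, so the union over all such extensions is a well-defined maximal solution and $T_{\max}$ is characterized as the stated supremum.

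For the continuation criterion (the actual statement to be proved), suppose $T_{\max}<\infty$ and that neither alternative i) nor ii) of Corollary \ref{corollary:continuation} holds; I would derive a contradiction by showing the solution extends beyond $T_{\max}$. Negating i) means $v(t)\to v_*$ in $X_{\mu,p}$ as $t\to T_{\max}$ for some $v_*$; negating ii) means $\operatorname{dist}(v_*,\partial V_{\mu,p})>0$, so $v_*\in V_{\mu,p}$ with some ball $\overline{B(v_*,2\varepsilon)}\subset V_{\mu,p}$. The key point is a \emph{uniform} local existence time: by the Lipschitz-in-initial-data part of Proposition \ref{theorem:localexistencedeterministic} (and the $t$-uniformity of the constants in (A3), which is what makes the existence time depend only on $\|v_0\|_{X_{\mu,p}}$, the distance to $\partial V_{\mu,p}$, and $T$ --- not on the initial \emph{time}), there is a single $\tau>0$ such that for every $s<T_{\max}$ close enough to $T_{\max}$ the problem started at time $s$ from $v(s)$ has a solution on $[s,s+\tau]$ with $s+\tau>T_{\max}$; gluing this to $v$ on $[0,s]$ (the glued function lies in $\mathbb{E}(0,s+\tau)$ because the weight $t^{1-\mu}$ is harmless away from $0$ and the two pieces match at $t=s$ in the trace sense) produces a solution strictly past $T_{\max}$, contradicting maximality. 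When $V_{\mu,p}=X_{\mu,p}$ alternative ii) is vacuous, giving the stated equivalence.

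The main obstacle is the uniform-in-starting-time local existence estimate: I must check that in the Prüss--Simonett--Wilke fixed point argument the smallness threshold for $T_0$ can be chosen depending only on $\|v_0\|_{X_{\mu,p}}$ and $\operatorname{dist}(v_0,\partial V_{\mu,p})$ but \emph{not} on the base point $t_0$ of the time interval. This is precisely where assumption (A3)'s stipulation that $C$ is ``independent of $t\in[0,T]$'' does the work --- but one has to be careful that the nonautonomous $F_i$ still map $C([t_0,t_0+T_0];V_{\mu,p})$ (resp.\ the intersection with the $X_\beta$-regularity space) into $L^p_\mu((t_0,t_0+T_0);X_0)$ with norm bounds that are $t_0$-uniform, not merely finite for each fixed datum. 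Assuming that, which is exactly the shape of (A3), the rest is a faithful transcription of \cite{PruessSimonettWilke} and \cite[Ch.~5]{PruessSimonett}.
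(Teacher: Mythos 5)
Your proposal is correct and follows exactly the route the paper intends: the corollary is quoted from Pr\"uss--Simonett (Corollary 5.1.2) and rests on the nonautonomous local existence result of Proposition~\ref{theorem:localexistencedeterministic}, with the only new ingredient being the observation you correctly isolate, namely that the $t$-uniformity of the constants in (A3) makes the local existence time uniform in the starting time so that the standard continuation-by-contradiction argument goes through. The one point stated a little loosely is the gluing step: the extension started at time $s$ is a priori only in the $\mu$-weighted class relative to $s$, so one should invoke the instantaneous smoothing of Remark~\ref{rem:embedding} (or uniqueness against the original solution on $(s,T_{\max})$) to see that the concatenated function lies in the unweighted class away from $0$; this is routine and does not affect the argument.
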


The following  criteria for global existence is akin to the classical Serrin-type results for the Navier-Stokes equations.
\begin{proposition}[cf. \cite{PruessSimonettWilke} Theorem 2.4]\label{theorem:serrin}
Let $u$ be the solution of \eqref{eq:semilineardeterministic} given in Proposition~\ref{theorem:localexistencedeterministic} on its maximal interval of existence $[0,T_{\max})$ and the 
critical weight $\mu_c>1/p$. Then
\begin{itemize}
\item[i)] $u\in L^p((0,T);X_{\mu_c})$ for all $T<T_{\max}$.
\item[ii)] $u\notin L^p((0,T_{\max});X_{\mu_c})$ if $T_{\max}<\infty$.
\end{itemize}
\end{proposition}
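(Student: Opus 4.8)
The plan is to transfer the proof of \cite[Theorem 2.4]{PruessSimonettWilke} to the present nonautonomous framework, the only substantial adjustment being that the constants in \eqref{eq:estimatef1}--\eqref{eq:estimatef2} must be invoked in their $t$-uniform form throughout. Write $\mathbb{E}_1(a,b):=H^{1,p}((a,b);X_0)\cap L^{p}((a,b);X_1)$, $X_{\mu_c}:=(X_0,X_1)_{\mu_c-1/p,p}$ and $X_{1,p}:=(X_0,X_1)_{1-1/p,p}$. A first remark is that the (sub)criticality inequalities $\rho_j\beta+\beta_j-1\le\rho_j(\mu-1/p)$ of (A3) are, for each $j$ with $\rho_j>0$, equivalent to $\tfrac1p+\beta-(1-\beta_j)/\rho_j\le\mu$, hence $\mu_c\le\mu$; together with $\mu_c>1/p$ this gives continuous embeddings $X_{1,p}\hookrightarrow X_{\mu,p}\hookrightarrow X_{\mu_c}\hookrightarrow X_0$. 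Assertion (i) is then immediate: for $T<T_{\max}$, Proposition~\ref{theorem:localexistencedeterministic} together with Remark~\ref{rem:embedding} places $u$ in $BUC([0,T];X_{\mu,p})$, and since $[0,T]$ is bounded this yields $u\in C([0,T];X_{\mu_c})\hookrightarrow L^{p}((0,T);X_{\mu_c})$.

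For (ii) I would argue by contradiction, assuming $T_{\max}<\infty$ and $u\in L^{p}((0,T_{\max});X_{\mu_c})$, and then reach a contradiction by showing that $\lim_{t\to T_{\max}}u(t)$ exists in $X_{\mu,p}$, which Corollary~\ref{corollary:continuation} forbids. Fix $\delta\in(0,T_{\max})$. By the instantaneous smoothing of Remark~\ref{rem:embedding} one has $u\in\mathbb{E}_1(\delta,T)\hookrightarrow BUC([\delta,T];X_{1,p})$ for every $T<T_{\max}$, so it suffices to bound $\|u\|_{\mathbb{E}_1(\delta,T)}$ \emph{uniformly} in $T\in(\delta,T_{\max})$: then $u\in\mathbb{E}_1(\delta,T_{\max})\hookrightarrow C([\delta,T_{\max}];X_{1,p})$ and the limit exists in $X_{1,p}\hookrightarrow X_{\mu,p}$. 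Since $t\mapsto\|u(t)\|_{X_{\mu_c}}^{p}$ is integrable on $(0,T_{\max})$, I would partition $(\delta,T_{\max})$ into \emph{finitely many} intervals $I_k=(a_k,a_{k+1})$, $k=0,\dots,N-1$, on each of which $\|u\|_{L^{p}(I_k;X_{\mu_c})}\le\eta$, with $\eta>0$ to be fixed.

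On each $I_k$, maximal $L^{p}$-regularity of $A$ on $\R_+$ (Proposition~\ref{bidomopthm}(c), i.e.\ (A2)) applied to the linear problem $\partial_t v+Av=F_1(\cdot,u)+F_2(\cdot,u)$ on $I_k$ with datum $u(a_k)\in X_{1,p}$, whose solution is $u|_{I_k}$ by uniqueness, yields $\|u\|_{\mathbb{E}_1(I_k)}\le C_{mr}\big(\|u(a_k)\|_{X_{1,p}}+\|F_1(\cdot,u)\|_{L^{p}(I_k;X_0)}+\|F_2(\cdot,u)\|_{L^{p}(I_k;X_0)}\big)$ with $C_{mr}$ independent of $k$. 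I would estimate the $F_1$-term using \eqref{eq:estimatef1} and the interpolation inequality $\|x\|_{X_{\mu,p}}\le\varepsilon\|x\|_{X_1}+C_\varepsilon\|x\|_{X_0}$, so that a small multiple of $\|u\|_{\mathbb{E}_1(I_k)}$ is produced while the remainder is $\le C\|u\|_{L^p(I_k;X_{\mu_c})}\le C\eta$ (using $X_{\mu_c}\hookrightarrow X_0$), plus the summable tail of $F_1(\cdot,0)\in L^p_\mu((0,T_{\max});X_0)$; and the $F_2$-term using \eqref{eq:estimatef2} with $v_2=0$, interpolating $X_\beta$ and $X_{\beta_j}$ between $X_0$ and $X_1$ and invoking the mixed-derivative embedding (A1) to pass from $\mathbb{E}_1(I_k)$-membership to the mixed Lebesgue norms dictated by H\"older's inequality in time. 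The point is that the criticality identities $\rho_j\beta+\beta_j-1=\rho_j(\mu_c-1/p)$ (resp.\ the strict inequality for subcritical $j$) make these time-exponents close \emph{exactly}, so that schematically $\|F_2(\cdot,u)\|_{L^p(I_k;X_0)}\le\phi(\eta)\,\|u\|_{\mathbb{E}_1(I_k)}+(\text{summable remainder})$ with $\phi(\eta)\to0$ as $\eta\to0$. All these constants are $t$-uniform by (A3). Choosing first $\varepsilon$ and then $\eta$ small enough to absorb the $\|u\|_{\mathbb{E}_1(I_k)}$-terms leaves $\|u\|_{\mathbb{E}_1(I_k)}\le C(\|u(a_k)\|_{X_{1,p}}+1)$ with $C$ independent of $k$ and $T$; taking traces, $\|u(a_{k+1})\|_{X_{1,p}}\le C(\|u(a_k)\|_{X_{1,p}}+1)$, and iterating this recursion through the \emph{finitely many} $k$ gives the desired uniform bound on $\|u\|_{\mathbb{E}_1(\delta,T)}$.

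The crux of the argument is the critical estimate for $F_2$: the interpolation and time-H\"older exponents must be organised so that the \emph{superlinear} contribution of $F_2(\cdot,u)$ is absorbed by a factor that is a positive power of the \emph{small} critical mass $\|u\|_{L^{p}(I_k;X_{\mu_c})}$, and not by a power of the (a priori possibly unbounded) $X_1$-norm of $u$ on $I_k$; this is exactly where $\mu_c$ enters as the critical threshold, and it is the technical heart taken over from \cite{PruessSimonettWilke}. The only genuinely new point, specific to this paper, is bookkeeping: since $F_1$ and $F_2$ here depend explicitly on $t$ (they arise from the stochastic reformulation), one must carry the uniformity in $t\in[0,T)$ through every one of the estimates above --- which is precisely why assumption (A3) is phrased with $t$-uniform constants.
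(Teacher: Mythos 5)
Your overall plan for part (ii) --- decompose $(\delta,T_{\max})$ into finitely many intervals of small critical mass, apply maximal $L^p$-regularity on each, absorb the nonlinear contributions using the criticality identity, and iterate the resulting recursion for the traces --- is exactly the argument of \cite[Theorem 2.4]{PruessSimonettWilke}, and your observation that the only genuinely new ingredient here is the $t$-uniformity of the constants in (A3) is precisely why the paper states this proposition as a citation rather than proving it. In that sense the route is the right one. Note, however, that the decisive estimate $\|F_2(\cdot,u)\|_{L^p(I_k;X_0)}\le\phi(\eta)\|u\|_{\mathbb{E}_1(I_k)}+(\text{summable})$ with $\phi(\eta)\to 0$ is asserted, not derived; it is the entire technical content of the cited theorem, so the proposal is a transfer plan rather than a proof.

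There is one concrete error: you set $X_{\mu_c}:=(X_0,X_1)_{\mu_c-1/p,p}$, but in this paper's notation (cf.\ the convention $X_\beta:=(X_0,X_1)_\beta$ and, decisively, the proof of Theorem~\ref{thm:globald2}, where $X_{\mu_c}=H^{2/p+2/q-1,q}(G)\times L^q(G)=(X_0^{\sf w_1},X_1^{\sf w_1})_{\mu_c}$) the Serrin space is the \emph{complex} interpolation space at parameter $\mu_c$, not the real one at $\mu_c-1/p$. This invalidates your proof of (i): the embedding $X_{\mu,p}\hookrightarrow X_{\mu_c}$ that you invoke requires $\mu-1/p\ge\mu_c$, which fails in the critical case $\mu=\mu_c$ (the case of interest); (i) instead follows from the mixed-derivative embedding (A1), namely $H^{1,p}_\mu\cap L^p_\mu(X_1)\hookrightarrow H^{1-\mu_c,p}_\mu((0,T);X_{\mu_c})\hookrightarrow L^p((0,T);X_{\mu_c})$, using $1-\mu_c\ge 1-\mu$ to remove the weight. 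The misidentification also matters in (ii): the interpolation of $\|u\|_{X_\beta}$ and $\|u\|_{X_{\beta_j}}$ between the critical space and $X_1$, followed by H\"older in time, closes exactly because the criticality identity is calibrated to $(X_0,X_1)_{\mu_c}$; replacing it by the strictly larger space $(X_0,X_1)_{\mu_c-1/p,p}$ would make (ii) a strictly stronger claim whose exponents no longer match. With $X_{\mu_c}$ read correctly, your chain $X_{1,p}\hookrightarrow X_{\mu,p}\hookrightarrow X_{\mu_c}$ should be dropped and only $X_{\mu_c}\hookrightarrow X_0$ retained.
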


\subsection{Stochastic maximal regularity} \mbox{}\\
In this subsection we put some of the results on stochastic maximal $L^q$-regularity developed in  \cite{NeervenVeraarWeiss} into the context of the linearized bidomain equation.  

Let $(\Omega, \mathcal{A},P)$ be a probability space with a filtration $\mathcal{F}=(\mathcal{F}_t)_t$. An 
$\mathcal{F}$-cylindrical Brownian motion on a Hilbert space $\cH$ is a bounded linear operator $\mathcal{W}: L^2((0,\infty);\cH)
\to L^2(\Omega)$ such that for all $f,g\in \cH, t'\geq t \geq 0$:
\begin{itemize}
\item[a)] The random variable $W(t)f:=\mathcal{W}(\mathds{1}_{[0,t]}\otimes f)$ is centered Gaussian and $\mathcal{F}_t$-measurable.
\item[b)] $\E[W(t')f\cdot W(t)g]= t \left< f,g \right>_\cH$.
\item[c)] The random variable $W(t')f-W(t)f$ is independent of $\mathcal{F}_t$.
\end{itemize}
If $\cH$ is separable and $(e_n)_n$ an orthonormal basis of $\cH$, then $\beta_n(t):=W(t)e_n$ is a standard $\mathcal{F}$-Brownian motion, and we have the representation 
\begin{align*}
W(t)f = \sum_{n=1}^\infty \beta_n(t) \left< f,e_n \right>_\cH.
\end{align*}
Hence, $W(t): \cH \to L^2(\Omega)$, $W(t)= \sum_{n=1}^\infty \beta_n(t) \left< \cdot ,e_n \right>_\cH$ defines a family of linear operators. Combining Proposition \ref{bidomopthm} b) with the results in 
\cite{NeervenVeraarWeiss} to the  linearized bidomain equation for given $H$, 
\begin{align}\label{eq:stoch}
\d Z(t) + A Z(t) \dt = H(t) \dW(t),
\end{align}
we obtain the following result on the stochastic convolution
\begin{align}\label{eq:stochconvolution}
Z(t):=\int_0^t e^{(t-s)A} H(s) \dW(s).
\end{align}

\begin{proposition}[cf. \cite{NeervenVeraarWeiss}, Theorems 1.1 and 1.2]\label{theorem:stochreg}
Let $r,s\in[2,\infty)$ with $s>2$ for $r\neq2$. Then for all $\mathcal{F}$-adapted $H\in L^s((0,\infty)\times \Omega; L^r(G;\cH))$ the stochastic convolution \eqref{eq:stochconvolution} is well defined in $L^r(G)$, $\mathcal{F}$-adapted and the mild solution of \eqref{eq:stoch}.
Moreover, 
\begin{itemize}
\item[i)] for all $\theta \in [0,\frac12)$ there exists a constant $C>0$ such that
\begin{align*}
\E\left[ \|Z\|_{H^{\theta,s}((0,\infty); D(A^{1/2-\theta}))}^s \right] \leq C \E\left[ \|H\|_{L^{s}((0,\infty); L^r(G,\cH))}^s \right],
 \end{align*}
\item[ii)] there exists a constant $C>0$ such that 
\begin{align*}
\E\left[ \|Z\|_{L^{\infty}((0,\infty); D_{A}(1/2,s))}^s \right] \leq C \E\left[ \|H\|_{L^{s}((0,\infty); L^r(G,\cH))}^s \right].
\end{align*}
\end{itemize}
\end{proposition}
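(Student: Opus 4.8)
The strategy is to verify that the bidomain operator fits exactly into the abstract framework of van Neerven, Veraar and Weis \cite{NeervenVeraarWeiss} and then to transcribe their conclusions to the present situation; the analytic heart of the matter, a bounded $H^\infty$-calculus of angle $0$, is already provided by Proposition~\ref{bidomopthm}.

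First I would record the structural hypotheses needed by \cite{NeervenVeraarWeiss}. For $r\in[2,\infty)$ the space $L^r(G)$ is a UMD Banach space of type $2$, and the space $\gamma(\cH,L^r(G))$ of $\gamma$-radonifying operators from $\cH$ to $L^r(G)$ is canonically isomorphic to the square-function space $L^r(G;\cH)$, so that the norm on $H$ used in the statement agrees with the $\gamma$-norm employed in \cite{NeervenVeraarWeiss}. By Proposition~\ref{bidomopthm}(a),(b) — and Remark~\ref{rem:A} in the coupled formulation — the operator $A$ is sectorial, boundedly invertible and admits a bounded $H^\infty$-calculus of angle $\phi_A^\infty=0<\pi/2$ on the relevant ground space. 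These are precisely the standing assumptions under which \cite{NeervenVeraarWeiss} establishes stochastic maximal $L^s$-regularity for $s\in[2,\infty)$; the extra restriction $s>2$ when $r\neq2$ is what makes available an admissible Hölder exponent $\theta\in(1/s,1/2)$ in assertion (ii) and accommodates the $L^r$-forms of the Burkholder--Davis--Gundy inequalities (for $r=2$ the Hilbert-space It\^o isometry already permits $s=2$).

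Next, for the well-posedness of $Z$ and assertion (i) with $\theta=0$ I would invoke the basic stochastic maximal regularity estimate \cite[Thm.~1.1]{NeervenVeraarWeiss}: for $\cF$-adapted $H\in L^s((0,\infty)\times\Omega;\gamma(\cH,X))$ the convolution \eqref{eq:stochconvolution} is well defined in $L^r(G)$, $\cF$-adapted, is the mild solution of \eqref{eq:stoch}, and satisfies $A^{1/2}Z\in L^s(\Omega;L^s((0,\infty);X))$ with $\E\|A^{1/2}Z\|_{L^s((0,\infty);X)}^s\leq C\,\E\|H\|_{L^s((0,\infty);\gamma(\cH,X))}^s$. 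The fractional-in-time gain for $\theta\in(0,1/2)$ then follows from the stochastic Da~Prato--Kwapie\'n--Zabczyk factorization
\[
Z(t)=\frac{\sin(\pi\theta)}{\pi}\int_0^t(t-\sigma)^{\theta-1}e^{(t-\sigma)A}Y_\theta(\sigma)\,\d\sigma,\qquad Y_\theta(\sigma):=\int_0^\sigma(\sigma-s)^{-\theta}e^{(\sigma-s)A}H(s)\,\dW(s).
\]
Stochastic maximal regularity applied with the admissible singular weight $(\sigma-s)^{-\theta}$ controls $A^{1/2}Y_\theta$ in $L^s((0,\infty)\times\Omega;X)$, while the deterministic operator $g\mapsto\int_0^\cdot(\cdot-\sigma)^{\theta-1}e^{(\cdot-\sigma)A}g(\sigma)\,\d\sigma$ is bounded from $L^s((0,\infty);D(A^{1/2}))$ into $H^{\theta,s}((0,\infty);D(A^{1/2-\theta}))$; this last, purely deterministic, mapping property is exactly where the bounded $H^\infty$-calculus (equivalently, the mixed-derivative theorem together with the $L^s$-boundedness of $A^{-\theta}$ composed with that convolution) enters. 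Composing the two bounds yields assertion (i), which is the content of \cite[Thm.~1.1]{NeervenVeraarWeiss}.

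Finally, assertion (ii) is the corresponding maximal statement at the level of time-traces: the sharp time-trace theorem for the stochastic maximal $L^s$-regularity class — equivalently, an optimization over $\theta\in(1/s,1/2)$ of the embeddings $H^{\theta,s}((0,\infty);D(A^{1/2-\theta}))\hookrightarrow BUC([0,\infty);D(A^{1/2-\theta}))$ furnished by (i) — produces an $\cF$-adapted modification of $Z$ with trajectories in $BUC([0,\infty);D_A(1/2,s))$ together with $\E\|Z\|_{L^\infty((0,\infty);D_A(1/2,s))}^s\leq C\,\E\|H\|_{L^s((0,\infty);\gamma(\cH,X))}^s$; this is \cite[Thm.~1.2]{NeervenVeraarWeiss}, whose hypotheses coincide with those checked in the first step. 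I do not expect a genuine obstacle here: all the heavy input — the bounded $H^\infty$-calculus of angle $0$ for the non-local operator $A$ — is already in hand via Proposition~\ref{bidomopthm}, so the proof reduces to citation together with three bookkeeping points, namely the identification $\gamma(\cH,L^r(G))\cong L^r(G;\cH)$ and the type-$2$ property of $L^r(G)$; the tracking of the constraint $s>2$ for $r\neq2$ and of the admissible range of $\theta$; and the matching of the trace space of the stochastic maximal regularity class with the notation $D_A(1/2,s)$ used here.
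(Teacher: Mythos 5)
Your proposal is correct and follows essentially the same route as the paper, which gives no separate proof of Proposition~\ref{theorem:stochreg} but simply combines the bounded $H^\infty$-calculus of angle $0$ from Proposition~\ref{bidomopthm}(b) (transferred to $A$ via Remark~\ref{rem:A}) with Theorems~1.1 and~1.2 of \cite{NeervenVeraarWeiss}. The additional detail you supply --- the identification $\gamma(\cH,L^r(G))\cong L^r(G;\cH)$, the type-$2$/UMD hypotheses, the role of $s>2$ for $r\neq 2$, and the factorization mechanism behind the $\theta>0$ case --- is accurate bookkeeping that the paper leaves implicit.
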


For functions $H$ having  better spatial regularity we obtain the following result.

\begin{proposition}[cf. \cite{NeervenVeraarWeiss2}, Theorem 4.5]\label{theorem:stochregimproved}
Under the assumptions of Theorem \ref{theorem:stochreg} assume  additionally that $A^{1/2}H\in L^s((0,\infty)\times \Omega; L^r(G;\cH))$.
\begin{itemize}
\item[i)] For all $\theta \in [0,\frac12)$ there exists $C>0$ such that
\begin{align*}
\E\left[ \|Z\|_{H^{\theta,s}((0,\infty); D(A^{1-\theta}))}^s \right] \leq C \E\left[ \|A^{1/2}H\|_{L^{s}((0,\infty); L^r(G,\cH))}^s \right].
\end{align*}
\item[ii)] There exists $C>0$ such that  
\begin{align*}
\E\left[ \|Z\|_{L^{\infty}((0,\infty);D_{A}(1,s))}^s \right] \leq C \E\left[ \|A^{1/2}H\|_{L^{s}((0,\infty); L^r(G,\cH))}^s \right]. 
\end{align*}
\end{itemize}
\end{proposition}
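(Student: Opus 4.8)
The plan is to reduce the claim to Proposition~\ref{theorem:stochreg} by commuting the fractional power $A^{1/2}$ through the stochastic convolution \eqref{eq:stochconvolution}. By Remark~\ref{rem:A} the operator $A$ inherits from $\aA$ the bounded $H^\infty$-calculus of angle $0$ (Proposition~\ref{bidomopthm}(b)); in particular $A$ is boundedly invertible, its fractional powers $A^\gamma$, $\gamma\in\R$, are well defined and satisfy $A^{\gamma_1}A^{\gamma_2}=A^{\gamma_1+\gamma_2}$ on the relevant domains, and $A^\gamma$ commutes with the analytic semigroup $(e^{tA})_{t\geq 0}$ on $D(A^\gamma)$. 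Consequently $A^{1/2}$ is an isomorphism from $D(A^{1-\theta})$ onto $D(A^{1/2-\theta})$ and, by the reiteration properties of the real-interpolation scale of $A$, from $D_A(1,s)$ onto $D_A(1/2,s)$, with the norm equivalences
\[
\|z\|_{D(A^{1-\theta})}\simeq\|A^{1/2}z\|_{D(A^{1/2-\theta})},\qquad \|z\|_{D_A(1,s)}\simeq\|A^{1/2}z\|_{D_A(1/2,s)}.
\]

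First I would set $\widetilde H:=A^{1/2}H$, which is $\mathcal F$-adapted and, by hypothesis, lies in $L^s((0,\infty)\times\Omega;L^r(G;\cH))$. Thus Proposition~\ref{theorem:stochreg} applies verbatim to
\[
\widetilde Z(t):=\int_0^t e^{(t-s)A}\widetilde H(s)\,\dW(s),
\]
which is well defined, $\mathcal F$-adapted and the mild solution of $\d\widetilde Z+A\widetilde Z\dt=\widetilde H\dW$, and yields for every $\theta\in[0,\tfrac12)$ the bound $\E[\|\widetilde Z\|_{H^{\theta,s}((0,\infty);D(A^{1/2-\theta}))}^s]\leq C\,\E[\|A^{1/2}H\|_{L^s((0,\infty);L^r(G,\cH))}^s]$ together with the corresponding $L^\infty_t$-$D_A(1/2,s)$ estimate.

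The core step is then the identity $\widetilde Z=A^{1/2}Z$. Since $A^{1/2}$ is closed, $H(s)$ takes values in $D(A^{1/2})$ for a.e.\ $(s,\omega)$ with $A^{1/2}H$ stochastically integrable, and $A^{1/2}e^{(t-s)A}=e^{(t-s)A}A^{1/2}$ on $D(A^{1/2})$, the closed operator $A^{1/2}$ may be pulled inside the It\^o integral, giving $A^{1/2}Z(t)=\widetilde Z(t)$ almost surely. To make this rigorous I would insert the regularisations $A_n^{1/2}:=A^{1/2}(I+n^{-1}A)^{-1}$, which are bounded for each $n$ (hence commute with the stochastic integral and the semigroup) and deterministic (hence preserve adaptedness), and then let $n\to\infty$: by the standard properties of the sectorial operator $A$ one has $A_n^{1/2}H\to A^{1/2}H$ in $L^s((0,\infty)\times\Omega;L^r(G;\cH))$, so by the It\^o isometry (respectively the Burkholder--Davis--Gundy inequality in $L^s$) the stochastic integrals converge in $L^s(\Omega;\cdot)$, and closedness of $A^{1/2}$ transfers the limit. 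Combining $\widetilde Z=A^{1/2}Z$ with the norm equivalences of the first paragraph — and using that $A^{1/2}$, being time independent, commutes with $\partial_t$ so that the temporal Sobolev norms are preserved — converts the estimates for $\widetilde Z$ into the claimed estimates (i) and (ii) for $Z$; taking $s$-th moments and expectations is then immediate.

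I expect the main obstacle to be precisely this commutation of the unbounded closed operator $A^{1/2}$ with the stochastic convolution: one has to verify that the Yosida-type regularisations preserve $\mathcal F$-adaptedness and the required $L^s(L^r;\cH)$-integrability, and that the resulting stochastic integrals converge in a topology strong enough to pass the limit through $A^{1/2}$. Everything else — the algebra of fractional powers and the equivalences between the fractional-power and real-interpolation scales — is routine once the bounded $H^\infty$-calculus of $A$ is invoked.
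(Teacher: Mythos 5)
Your argument is correct, and in fact the paper offers no proof of this proposition at all: it is quoted directly from \cite{NeervenVeraarWeiss2}, Theorem 4.5. Your reduction to Proposition~\ref{theorem:stochreg} --- applying the base estimate to $\widetilde H=A^{1/2}H$ and then identifying $A^{1/2}Z=\widetilde Z$ by pulling the closed operator $A^{1/2}$ through the stochastic convolution \eqref{eq:stochconvolution} via bounded (Yosida-type) regularisations, together with the isomorphisms $A^{1/2}\colon D(A^{1-\theta})\to D(A^{1/2-\theta})$ and $A^{1/2}\colon D_A(1,s)\to D_A(1/2,s)$ furnished by the bounded $H^\infty$-calculus and invertibility of $A$ --- is essentially the standard proof of the cited theorem, so nothing is missing. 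The only point worth stating explicitly if you write this out is the borderline case $D_A(1,s)$, which is not a genuine real interpolation space between $X_0$ and $X_1$; the equivalence $\|z\|_{D_A(1,s)}\simeq\|A^{1/2}z\|_{D_A(1/2,s)}$ still holds for invertible sectorial operators with bounded imaginary powers, but it deserves a one-line justification rather than an appeal to ``reiteration''.
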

By Proposition \ref{bidomopthm} and \eqref{eq:interpolation}
\begin{align*}
D(A^{1/2-\theta}) = H^{1-2\theta,r}_N(G)\times L^r(G) \mbox{ and } D_{A}(1/2,s)=B^{1-2/s}_{r,s,N}(G)\times L^s(G).
\end{align*}
Hence, Proposition \ref{theorem:stochreg} and \ref{theorem:stochregimproved} combined with \eqref{eq:interpolation} implies the following regularity result for $Z$.

\begin{corollary}\label{theorem:stochlinex}
Let $s,r\geq 2$, $r>2$ if $s\neq 2$, $H=(h,0)^T$ with $h\in  L^s(\Omega;L^s((0,\infty); L^r(G)))$ $\mathcal{F}$-adapted and $\theta \in [0,1/2)$. 	
	\begin{enumerate}[i)]
		\item Then
			\begin{align*}
			Z &\in L^s \big(\Omega;H^{\theta,s}\big( (0,\infty); H^{1-2\theta,r}_N(G)\times L^r(G)\big)\big)\cap L^s \big(\Omega;L^{\infty}\big( (0,\infty); B^{1-2/s}_{r,s,N}(G)\times L^s(G)\big)\big).
			\end{align*}
			\item If in addition $\aA^{1/2}h\in  L^s(\Omega;L^s((0,\infty); L^r(G)))$, then 
				\begin{align*}
				Z &\in L^s \big(\Omega;H^{\theta,s}\big( (0,\infty); H^{2-2\theta,r}_N(G)\times L^r(G)\big)\big) \cap L^s \big(\Omega;L^{\infty}\big( (0,\infty); B^{2-2/s}_{r,s,N}(G)\times L^s(G)\big)\big).
				\end{align*}
	\end{enumerate}
\end{corollary}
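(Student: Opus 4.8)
The plan is to deduce the statement directly from Propositions~\ref{theorem:stochreg} and~\ref{theorem:stochregimproved} by unwinding the identifications of fractional power domains and real interpolation spaces. First, I observe that the matrix operator $A$ from \eqref{def:A} differs from the diagonal operator $\mathrm{diag}(\aA,0)$ only by the bounded, lower-order perturbation $\left[\begin{smallmatrix} a & 1 \\ -c & b \end{smallmatrix}\right]$; by Remark~\ref{rem:A} all the relevant structural properties of $\aA$ in Proposition~\ref{bidomopthm} --- in particular the bounded $H^\infty$-calculus of angle $0$, hence stochastic maximal regularity --- carry over to $A$ on $X_0^s = L^r(G)^2$. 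Since $H = (h,0)^{\sf T}$ with $h \in L^s(\Omega; L^s((0,\infty); L^r(G)))$ being $\mathcal F$-adapted, the hypothesis of Proposition~\ref{theorem:stochreg} is met with $\cH = \R$ (or the given Hilbert space), and the stochastic convolution $Z$ in \eqref{eq:stochconvolution} is well defined, $\mathcal F$-adapted, and solves \eqref{eq:stoch}. This takes care of existence and adaptedness; it remains to convert the abstract bounds into the concrete Bochner--Sobolev statements claimed.

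Next I would apply part i) of Proposition~\ref{theorem:stochreg} with the given $\theta \in [0,1/2)$, yielding $\E\|Z\|_{H^{\theta,s}((0,\infty);D(A^{1/2-\theta}))}^s \leq C\,\E\|H\|_{L^s((0,\infty);L^r(G;\cH))}^s < \infty$, i.e. $Z \in L^s(\Omega; H^{\theta,s}((0,\infty);D(A^{1/2-\theta})))$. Then I invoke the identification recorded just before the corollary, namely $D(A^{1/2-\theta}) = H^{1-2\theta,r}_N(G) \times L^r(G)$, which itself follows from \eqref{eq:interpolation} and Proposition~\ref{bidomopthm}(f) applied with $\alpha = 1/2-\theta$ (note $\alpha \in (0,1/2]$, so no boundary condition issue arises when $1-2\theta$ crosses $1/2 + 1/2q$ only for the larger-exponent regime, which does not occur here). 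Similarly, part ii) of Proposition~\ref{theorem:stochreg} gives $Z \in L^s(\Omega; L^\infty((0,\infty); D_A(1/2,s)))$, and $D_A(1/2,s) = B^{1-2/s}_{r,s,N}(G) \times L^s(G)$ by the real-interpolation identity $D_A(\alpha,p) = B^{2\alpha}_{rp,N}(G)\times L^r(G)$ with $\alpha = 1/2$, $p = s$. Intersecting the two memberships gives assertion i).

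For assertion ii), the additional hypothesis $\aA^{1/2}h \in L^s(\Omega; L^s((0,\infty); L^r(G)))$ translates into $A^{1/2}H \in L^s((0,\infty)\times\Omega; L^r(G;\cH))$ --- here one uses that on the second component $A^{1/2}$ acts as a bounded operator (the second row of $A$ involves no unbounded part), so only the $\aA^{1/2}h$ bound is needed --- which is exactly the hypothesis of Proposition~\ref{theorem:stochregimproved}. Applying its parts i) and ii) and then the identifications $D(A^{1-\theta}) = H^{2-2\theta,r}_N(G)\times L^r(G)$ and $D_A(1,s) = B^{2-2/s}_{r,s,N}(G)\times L^s(G)$ (again via \eqref{eq:interpolation}, Proposition~\ref{bidomopthm}(f) with $\alpha = 1-\theta$, and the real-interpolation identity with $\alpha = 1$) yields the two memberships whose intersection is assertion ii). The only genuinely delicate point --- and the one I would write out carefully --- is the bookkeeping for the boundary-condition subscript $N$: one must check that for $\alpha = 1-\theta \in (1/2,1)$ the space $H^{2-2\theta,r}_N(G)$ does carry the Neumann-type condition $\cB_k(\cdot,D)u = 0$ exactly when $2-2\theta > 1/2 + 1/2r$, consistent with the case distinction stated after \eqref{eq:interpolation}, and that the operator $\aA$ extended trivially to $L^r(G)$ still produces these spaces as its fractional power domains. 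Everything else is a direct substitution with no smallness or compatibility obstruction.
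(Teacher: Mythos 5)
Your proposal is correct and is essentially the paper's own argument: the corollary is obtained exactly by applying Propositions~\ref{theorem:stochreg} and~\ref{theorem:stochregimproved} to $H=(h,0)^{\sf T}$ and then substituting the identifications $D(A^{1/2-\theta})=H^{1-2\theta,r}_N(G)\times L^r(G)$, $D(A^{1-\theta})=H^{2-2\theta,r}_N(G)\times L^r(G)$ and the trace-space identifications coming from \eqref{eq:interpolation} and Proposition~\ref{bidomopthm}, just as you do. The only quibble is your parenthetical ``$\alpha=1/2$, $p=s$'' for the real-interpolation identity: to land on $B^{1-2/s}_{r,s,N}(G)$ from $D_A(\alpha,p)=B^{2\alpha}_{rp,N}(G)\times L^r(G)$ one needs $\alpha=1/2-1/s$ (resp.\ $\alpha=1-1/s$ in part ii)), a notational looseness the paper itself shares in writing $D_A(1/2,s)$ and $D_A(1,s)$.
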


\section{Local existence for various settings}
Given the solution $Z=(z,\zeta)^T$ to \eqref{eq:stoch} and $U$ to \eqref{eq:bidomainstochaddnoise}, the pathwise deterministic system for the remainder $V:=U-Z$ with $V=(v,w)^T$ reads as 
\begin{align}\label{eq:vA}
\begin{split}
\partial_t v + \aA v & = -v^3-3v^2z-3vz^2-z^3+(a+1)(v^2+2vz+z^2)-av-w,\\
\partial_t w &= -bw+cv
\end{split}
\end{align}
with initial conditions ~\eqref{BDESini}, and hence using \eqref{sl-ev-eq} $V$ solves the pathwise deterministic semilinear evolution equation
\begin{align}\label{eq:v}
\partial_t V + AV = F(V+Z),\quad t>0,\quad V(0)=(v_0, w_0)^T.
\end{align}
In the following we establish well-posedness results for \eqref{eq:v} in weak and strong settings. 

\begin{remark}\label{rem:nonlin}
	In order to apply Proposition~\ref{theorem:localexistencedeterministic}, note that the non-autonomous functions $F_1,F_2$ are derived by a shift from autonomous polynomial functions $G_1, G_2$ satisfying Assumption (A3) 
	\begin{align*}
	F_1(t,V):=G_1(V+Z(t)) \mbox{ and } F_2(t,V)=G_2(V+Z(t)). 
	\end{align*}
	Then, for $F_1, F_2$ to fulfill Assumption (A3) it  is sufficient to have $Z\in L^{s}((0,T);X_{\beta})\cap L^{p}((0,T);X_{\mu,p})$ for all $t\in[0,T]$ for some sufficiently large $s$ since $G_1, G_2$ are polynomials and 
	\begin{align*}
	\|G_1(V+Z)\|_{X_0} &\leq C (\|V\|_{X_{\mu,p}}+  \|Z\|_{X_{\mu,p}}), \\
	\|G_2(V +Z)\|_{X_0} &\leq C \sum_{j=1}^{m} \left(1+\|V\|^{\rho_j}_{X_{\beta}}+\|Z\|^{\rho_j}_{X_{\beta}} \right)(\|V\|_{X_{\beta_j}}+\|Z\|_{X_{\beta_j}}).
 	\end{align*}
\end{remark}

\subsection{Strong setting}\label{subsec:strong}\mbox{} \\
\vspace{.1cm}\noindent
In order to apply Proposition \ref{theorem:localexistencedeterministic} 
in the strong setting, i.e.,
\begin{align*}
X_0^s= L^{q}(G)\times L^q(G)\quad \hbox{and} \quad X_1^s= \cD(\aA)\times L^q(G)
\end{align*}
let $1<p,q<\infty$ and  set $\beta=\frac{d}{3q}$. Then 
\begin{align*}
H^{2\beta,q}(G)\hookrightarrow L^{3q}(G),\quad
X_{\beta}^s=H^{2\beta,q}_N(G) \times L^q(G) \mbox{ and } X_{\mu,p}^s= B^{2\mu-2/p}_{q,p,N}(G)\times L^q(G), \quad \mu\in (1/p,1].
\end{align*}
The factor $3$ is due to the order of the polynomials in the FitzHugh-Nagumo nonlinearity which also implies $\rho_1=2$. Now, for $1<q<d$ the minimal choice for $\mu$ to guarantee that assumption (A3) subject to  $m=1$, $\rho_1=2$ and $\beta_1=\beta$ holds is the critical weight $\mu_c$ given by 
$$\mu_c=\frac{1}{p}+\frac{d}{2q}-\frac{1}{2},$$
cf. \cite[Theorem 3.1]{HieberPruess}.
Combining this with the condition  $\mu\in (1/p,1]$, 
we see that  $q>d$ is not admissible, 
and 
that for $1<q<d$ we need to require $\frac{1}{p}+\frac{d}{2q}\leq \frac{3}{2}$.

Now let us turn to the regularity of $z$. Under the assumptions of  Corollary \ref{theorem:stochlinex} i), i.e., for $s,r\geq 2$ with  $r>2$ if $s\neq 2$ one has (pathwise)
\begin{align*}
z \in H^{\theta,s}\big( (0,\infty); H^{1-2\theta,r}_N(G)\big) \quad \hbox{while} \quad 
z\in L^{\eta}((0,\infty);H^{2\beta,q}_N(G))
\end{align*}
for $\eta$ sufficiently large, cf. Remark~\ref{rem:nonlin}, is needed  
to apply Proposition \ref{theorem:localexistencedeterministic}. To match both, let 
\begin{align*}
q>2d/3, \quad r\geq q, \quad \hbox{and} \quad s \geq p \hbox{ with } \beta+1/s \leq 1/2.
\end{align*}
Setting $\theta=1/2-\beta$ we obtain $z\in L^{\eta}((0,\infty);H^{2\beta,q}_N(G))$ for all $\eta\in (1,\infty)$.
Furthermore, we need $z\in L^{p}((0,T); B^{2\mu-2/p}_{q,p,N}(G))$ for some $T>0$, cf. Remark~\ref{rem:nonlin}, and restricting   $\mu\in(1/p,1/p+1/2-1/s]$ with $\mu\geq \mu_c$ we obtain $B^{2\mu-2/p}_{q,p,N}(G)\subset B^{1-2/s}_{r,s,N}(G)$, so $z\in L^{\infty}((0,\infty); B^{2\mu-2/p}_{q,p,N}(G))$ and $z\in L^{p}((0,T); B^{2\mu-2/p}_{q,p,N}(G))$ for all $T>0$.

Letting $h$ as in the assumption of Corollary \ref{theorem:stochlinex} ii), we have 
\begin{align*}
z \in H^{\theta,s}\big( (0,\infty); H^{2-2\theta,r}_N(G)\big), \quad \hbox{and} \quad 
z\in L^{\eta}((0,\infty);H^{2\beta,q}_N(G))
\end{align*}
is needed to apply 
Proposition \ref{theorem:localexistencedeterministic}. So, 
consider first the case  $\beta\leq 1/2$.
Let $r\geq q$ and $s\geq p$, then $\theta$ close to $1/2$, yields the needed embedding  for all $\eta\in (1,\infty)$. 
For  $\beta>1/2$ let  $s$ be such that  $\beta+1/s \leq 1$ and choosing  
$\theta=1-\beta$ one obtains $z\in L^{\eta}((0,\infty);H^{2\beta,q}_N(G))$ for all $1<\eta<\infty$. 
Furthermore, in either case for $s \geq p$, $z\in L^{\infty}((0,\infty); B^{2\mu-2/p}_{q,p,N}(G))$.

In order to summarize the above considerations we introduce the following assumption (Stoch) and (S).

\noindent
{\bf Assumption (Stoch):} Let $1<p,q<\infty$, $2\leq s,r<\infty$, $r>2$ if $s>2$ and  $s\geq p$, $r\geq q$. 

\vspace{.1cm}
\noindent
{\bf Assumption (S):}  Assume (Stoch), $\frac{1}{p}+\frac{d}{2q}\leq \frac{3}{2}$ and let $\mu\in (1/p,1]$ such that  $\mu \geq \frac{1}{p}+\frac{d}{2q}-\frac{1}{2}$. Suppose 
\begin{itemize}
\item[i)] $q>2d/3$, $d/(3q)+1/s \leq 1/2$, $\mu\leq 1/p+1/2-1/s$ if $h\in  L^s(\Omega;L^s((0,\infty); L^r(G)))$;
\item[ii)] $d/(3q)+1/s \leq 1$ if $\aA^{1/2}h\in  L^s(\Omega;L^s((0,\infty); L^r(G)))$. 
\end{itemize}

\begin{proposition}\label{theorem:localpathwise}
Assume (S) and let $Z=(z,\zeta)$ be the solution to \eqref{eq:stoch} given in Corollary~\ref{theorem:stochlinex}. 
Then for 
\begin{align*}
(v_0,w_0)\in B^{2\mu-2/p}_{q,p,N}(G)\times L^q(G),
\end{align*}
there exists $T_0>0$ and a unique solution $(v,w)$ to \eqref{eq:v} with 
\begin{align*}
v \in H^{1,p}_\mu ((0,T_0); L^q(G)) \cap L^p_{\mu}((0,T_0); H^{2,q}_N(G)) , \quad w \in H^{1,p} ((0,T_0); L^q(G)). 
\end{align*}
\end{proposition}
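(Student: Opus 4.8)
The plan is to deduce Proposition~\ref{theorem:localpathwise} directly from the abstract local existence result Proposition~\ref{theorem:localexistencedeterministic}, applied in the strong setting $X_0^s = L^q(G)\times L^q(G)$, $X_1^s = \cD(\aA)\times L^q(G)$, with the operator $A$ from \eqref{def:A}. First I would fix the parameters: set $\beta = \tfrac{d}{3q}$, so that by \eqref{eq:interpolation} and the Sobolev embedding $H^{2\beta,q}(G)\hookrightarrow L^{3q}(G)$ one has $X_\beta^s = H^{2\beta,q}_N(G)\times L^q(G)$ and $X_{\mu,p}^s = B^{2\mu-2/p}_{q,p,N}(G)\times L^q(G)$; and I would record that Assumption (S) guarantees $\mu\in(1/p,1]$ with $\mu\ge\mu_c = \tfrac1p+\tfrac{d}{2q}-\tfrac12$, which is exactly the condition needed for the critical/subcritical structure in (A3) with $m=1$, $\rho_1=2$, $\beta_1=\beta$.

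Next I would verify the three hypotheses (A1)--(A3). Hypotheses (A1) and (A2) are immediate from Remark~\ref{rem:A} together with Proposition~\ref{bidomopthm}: $A$ is sectorial, boundedly invertible, admits a bounded $H^\infty$-calculus of angle $0<\pi/2$ on $X_0^s$, and hence has maximal $L^p$-regularity; the embedding in (A1) then follows from the remark accompanying (A1). The substance is (A3). Here I would use Remark~\ref{rem:nonlin}: write $F_1(t,V) = G_1(V+Z(t))$, $F_2(t,V) = G_2(V+Z(t))$, where $G_1,G_2$ are the autonomous polynomial pieces of the FitzHugh--Nagumo nonlinearity $F$ (the quadratic/linear part and the cubic part, respectively), both of which satisfy the autonomous version of (A3) by \cite[Theorem 3.1]{HieberPruess}. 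The Lipschitz-type estimates \eqref{eq:estimatef1}, \eqref{eq:estimatef2} for the shifted functions then follow from the displayed inequalities in Remark~\ref{rem:nonlin}, once one knows that $Z$ has the right pathwise integrability: precisely $z\in C([0,T];X_{\mu,p}^s)\cap L^\eta((0,T);X_\beta^s)$ for $\eta$ large enough, and $z\in L^p((0,T);X_{\mu,p}^s)$. I would then invoke Corollary~\ref{theorem:stochlinex}: in case~i), with $h\in L^s(\Omega;L^s((0,\infty);L^r(G)))$ and the choice $\theta = 1/2-\beta$, the conditions $q>2d/3$, $d/(3q)+1/s\le 1/2$ in (S)(i) yield $z\in L^\eta((0,\infty);H^{2\beta,q}_N(G))$ for all $\eta\in(1,\infty)$ and, via the Besov embedding $B^{2\mu-2/p}_{q,p,N}(G)\subset B^{1-2/s}_{r,s,N}(G)$ guaranteed by $\mu\le 1/p+1/2-1/s$ and $r\ge q$, $s\ge p$, also $z\in L^\infty((0,\infty);X_{\mu,p}^s)$ and $z\in L^p((0,T);X_{\mu,p}^s)$; in case~ii), with $\aA^{1/2}h\in L^s(\Omega;L^s((0,\infty);L^r(G)))$ and the condition $d/(3q)+1/s\le 1$, the improved Corollary~\ref{theorem:stochlinex}~ii) gives one extra derivative, so the same integrability follows (splitting into the subcases $\beta\le 1/2$ and $\beta>1/2$ as in the text preceding Assumption~(S)). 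Since $\zeta$ enters the nonlinearity only through bounded (indeed smooth) lower-order terms and is even better behaved, its contribution causes no difficulty.

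With (A1)--(A3) in hand, Proposition~\ref{theorem:localexistencedeterministic} applied to $v_0 = (v_0,w_0)\in X_{\mu,p}^s = B^{2\mu-2/p}_{q,p,N}(G)\times L^q(G)$ delivers $T_0>0$ and a unique solution $V = (v,w)\in H^{1,p}_\mu((0,T_0);X_0^s)\cap L^p_\mu((0,T_0);X_1^s)\cap C([0,T_0];X_{\mu,p}^s)$ of \eqref{eq:v}. Reading off the components via $X_0^s = L^q(G)^2$, $X_1^s = \cD(\aA)\times L^q(G) = H^{2,q}_N(G)\times L^q(G)$ gives $v\in H^{1,p}_\mu((0,T_0);L^q(G))\cap L^p_\mu((0,T_0);H^{2,q}_N(G))$ and, since the second equation is a plain linear ODE in $L^q(G)$ with right-hand side $cv\in L^p_\mu((0,T_0);L^q(G))$, in fact $w\in H^{1,p}((0,T_0);L^q(G))$ (the unweighted space, because the second component of $A$ is bounded so no weight is needed there). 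This is precisely the claimed regularity.

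I expect the main obstacle to be the careful bookkeeping in verifying (A3): matching the pathwise temporal integrability of the stochastic convolution $z$ coming from Corollary~\ref{theorem:stochlinex} with the exponents $\eta$, $p$, $s$ and the interpolation indices $\mu$, $\beta$, $\beta_j$, $\rho_j$ required by the estimates \eqref{eq:estimatef1}--\eqref{eq:estimatef2}, while simultaneously honoring the constraint $\mu\ge\mu_c$ together with $\mu\le 1$ and $\mu\le 1/p+1/2-1/s$ (case~i). Everything else --- the $H^\infty$-calculus, maximal regularity, the identification of interpolation spaces as Besov spaces, and the decomposition of the FitzHugh--Nagumo nonlinearity into $G_1$ and $G_2$ --- is quoted from the preceding results and from \cite{HieberPruess,HieberPruesslinear}, so the proof is essentially an assembly of these ingredients once the parameter regime (S) is seen to be consistent (which is exactly what the computations preceding Assumption~(S) establish).
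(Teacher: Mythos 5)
Your proposal is correct and follows essentially the same route as the paper: verify (A1)--(A3) in the strong setting via Remark~\ref{rem:nonlin} and the integrability of $z$ from Corollary~\ref{theorem:stochlinex} under Assumption~(S), then invoke Proposition~\ref{theorem:localexistencedeterministic} and bootstrap the $w$-equation. The only point to tighten is the last step: $cv\in L^p_\mu((0,T_0);L^q(G))$ alone would only give $w\in H^{1,p}_\mu$, so to get the unweighted $H^{1,p}$ you should instead use, as the paper does, that $v\in C([0,T_0];X_{\mu,p}^s)\hookrightarrow L^p((0,T_0);L^q(G))$ by Remark~\ref{rem:embedding}.
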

\begin{proof}
 Using Remark~\ref{rem:nonlin} and Assumption (S) together with the above discussed embeddings, the local existence in time-weighted spaces follows from  Proposition \ref{theorem:localexistencedeterministic}. 
 In particular it gives $w \in H^{1,p}_\mu ((0,T_0); L^q(G))$, but the equation for $w$ then yields since 
 $u\in BUC((0,T_0);X_{\mu,p}^s)\subset L^p ((0,T_0); L^q(G))$, see Remark~\ref{rem:embedding}, even
$w \in H^{1,p} ((0,T_0); L^q(G))$. 
\end{proof}

\subsection{Weak setting I}\label{subsec:weakI} \mbox{} \\
	In order to treat also the case $q \ge d$ which is excluded in Assumption (S), we consider the weak setting which, since  $H^{-1,q}(G)=\cD(\aA^{-1/2})$ and
	$H^{1,q}(G)=\cD(\aA^{1/2})$, is given 
	by
	\begin{align*}
	X_0^{\sf w_1} = H^{-1,q}(G) \times L^q(G) \quad \hbox{and}\quad X_1^{\sf w_1} =  H^{1,q}(G) \times L^q(G).  
	\end{align*}	
 
\vspace{.1cm}\noindent
{\bf Assumption (W1):}
Assume (Stoch), $d/(d-1)<q\leq 2d$ with $\frac{1}{p}+\frac{d}{2q}\leq 1$ and $\mu\in (1/p,1]$ such that 
$\mu \geq \frac{1}{p}+\frac{d}{2q}$. Suppose 
\begin{itemize}
\item[(i)] $d/(3q)+1/s \leq 2/3$ if $h\in  L^s(\Omega;L^s((0,\infty); L^r(G)))$; 
\item[(ii)] no further conditions if $\aA^{1/2}h\in  L^s(\Omega;L^s((0,\infty); L^r(G)))$. 
\end{itemize}

\begin{proposition}\label{theorem:localpathwiseweakI}
Assume (W1) and let $Z=(z,\zeta)$ be the solution to \eqref{eq:stoch} given in Corollary \ref{theorem:stochlinex}. 
Then for $$(v_0,w_0)\in B^{2\mu-2/p-1}_{q,p}(G)\times L^q(G),$$ there exists $T_0>0$ and a unique solution $V=(v,w)^T$ to \eqref{eq:v} satisfying
\begin{align*}
v \in H^{1,p}_\mu ((0,T_0); H^{-1,q}(G)) \cap L^p_{\mu}((0,T_0); H^{1,q}(G)), \quad w \in H^{1,p} ((0,T_0); L^q(G)).
\end{align*}
\end{proposition}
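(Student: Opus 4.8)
The plan is to verify the hypotheses of Proposition~\ref{theorem:localexistencedeterministic} in the weak setting $X_0^{\sf w_1}=H^{-1,q}(G)\times L^q(G)$, $X_1^{\sf w_1}=H^{1,q}(G)\times L^q(G)$, and then read off the regularity of the local solution from that proposition together with Remark~\ref{rem:embedding}. First I would record that, by Amann's interpolation--extrapolation scales as invoked right after Proposition~\ref{bidomopthm}, the matrix operator $A$ acts on $X_0^{\sf w_1}$ with domain $X_1^{\sf w_1}$ and inherits bounded $H^\infty$-calculus of angle $0$ and maximal $L^p$-regularity; this gives (A1) and (A2) immediately. The relevant interpolation spaces here are $X_{\beta}^{\sf w_1}=H^{2\beta-1,q}_N(G)\times L^q(G)$ and $X_{\mu,p}^{\sf w_1}=B^{2\mu-2/p-1}_{q,p,N}(G)\times L^q(G)$, which explains the stated initial-data class; I would also fix $\beta$ so that $H^{2\beta-1,q}(G)\hookrightarrow L^{3q}(G)$, i.e.\ the Sobolev embedding threshold $2\beta-1-d/q=-d/(3q)$, giving $\beta=\tfrac12+\tfrac{d}{3q}$, and with the cubic nonlinearity $\rho_1=2$, $\beta_1=\beta$, $m=1$. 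The critical-weight computation $\rho_1\beta+\beta_1-1\le\rho_1(\mu-1/p)$ then reduces to $\mu\ge \tfrac1p+\tfrac{d}{2q}$, matching the condition in Assumption (W1), and the constraint $\mu\le 1$ forces $\tfrac1p+\tfrac{d}{2q}\le 1$, again as assumed; the restriction $q\le 2d$ ensures $\beta<1$ so that $X_\beta^{\sf w_1}$ is an interior interpolation space, while $q>d/(d-1)$ keeps $\beta>1/2$, hence $2\beta-1>0$ and the embedding into $L^{3q}$ is meaningful.

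Next I would check Assumption (A3) via Remark~\ref{rem:nonlin}: the nonlinearities are $F_1(t,V)=G_1(V+Z(t))$, $F_2(t,V)=G_2(V+Z(t))$ with $G_1,G_2$ the (affine/quadratic) pieces of the FitzHugh--Nagumo polynomial, so the Lipschitz estimates \eqref{eq:estimatef1}--\eqref{eq:estimatef2} for $F_1,F_2$ follow from the polynomial estimates displayed in Remark~\ref{rem:nonlin} provided $Z$ has the time integrability $z\in L^\eta((0,T);X_\beta^{\sf w_1})\cap L^p((0,T);X_{\mu,p}^{\sf w_1})$ for $\eta$ large. This is exactly where the conditions in (W1)(i)--(ii) enter. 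Under Corollary~\ref{theorem:stochlinex}~i), $z\in H^{\theta,s}((0,\infty);H^{1-2\theta,r}_N(G))$ for $\theta\in[0,1/2)$; to land $z$ in $H^{2\beta-1,q}_N(G)=H^{2d/(3q),q}_N(G)$ one needs $r\ge q$ and $1-2\theta\ge 2d/(3q)$, and then the mixed-derivative embedding $H^{\theta,s}\hookrightarrow L^\eta$ for every finite $\eta$ works as soon as $\theta$ can be taken strictly below $1/2$ while still above $d/(3q)$, i.e.\ $d/(3q)<1/2$; combining with the additional room needed to also control the $X_{\mu,p}$-norm gives the stated $d/(3q)+1/s\le 2/3$ (the extra $1/s$ and the $2/3$ rather than $1$ come from simultaneously requiring the Besov embedding $B^{2\mu-2/p-1}_{q,p}\hookrightarrow B^{1-2/s}_{r,s}$ as in the strong-setting discussion). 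Under Corollary~\ref{theorem:stochlinex}~ii), $z$ has two full spatial derivatives more, so $2\beta-1\le 2$ is automatic for $q>d/(d-1)$ and no further condition on $s,r$ is needed, which is item (ii).

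With (A1)--(A3) in hand, Proposition~\ref{theorem:localexistencedeterministic} yields $T_0>0$ and a unique
\[
V\in H^{1,p}_\mu((0,T_0);X_0^{\sf w_1})\cap L^p_\mu((0,T_0);X_1^{\sf w_1})\cap C([0,T_0];X_{\mu,p}^{\sf w_1}),
\]
which componentwise is $v\in H^{1,p}_\mu((0,T_0);H^{-1,q}(G))\cap L^p_\mu((0,T_0);H^{1,q}(G))$ and a priori only $w\in H^{1,p}_\mu((0,T_0);L^q(G))$. To upgrade $w$ to the non-weighted space $H^{1,p}((0,T_0);L^q(G))$ I would argue exactly as in the proof of Proposition~\ref{theorem:localpathwise}: the trace embedding in Remark~\ref{rem:embedding} gives $v\in BUC([0,T_0];X_{\mu,p}^{\sf w_1})\hookrightarrow L^p((0,T_0);H^{-1,q}(G))$, and since the $w$-equation is the linear ODE $\partial_t w=-bw+cv$ solved by $w(t)=e^{-bt}w_0+c\int_0^t e^{-b(t-r)}v(r)\,dr$, Young's inequality in time promotes $w$ and $\partial_t w$ to $L^p((0,T_0);L^q(G))$. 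The main obstacle is not any single hard estimate but the bookkeeping of exponents: one must simultaneously satisfy $\mu>1/p$, $\mu\ge\tfrac1p+\tfrac{d}{2q}$, $\mu\le 1$, the Sobolev threshold fixing $\beta$, the subcriticality inequality with equality at $\mu=\mu_c$, and the stochastic-regularity constraints ($r\ge q$, $s\ge p$, $d/(3q)+1/s\le 2/3$ in case (i)) so that $Z$ genuinely has the integrability Remark~\ref{rem:nonlin} demands; checking that Assumption (W1) is precisely the feasible region for this system, and in particular that $d/(d-1)<q\le 2d$ is exactly what is needed for the embedding $H^{2\beta-1,q}\hookrightarrow L^{3q}$ to be both valid and with $\beta\in(1/2,1)$, is the delicate point.
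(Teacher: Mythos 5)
Your overall strategy is the right one and matches the paper's: verify (A1)--(A3) for the extrapolated operator on $X_0^{\sf w_1}=H^{-1,q}(G)\times L^q(G)$, feed the regularity of $Z$ from Corollary \ref{theorem:stochlinex} into Remark \ref{rem:nonlin}, invoke Proposition \ref{theorem:localexistencedeterministic}, and upgrade $w$ via the ODE. However, there is a concrete error in the exponent bookkeeping that you yourself flag as ``the delicate point'': your choice of $\beta$. You fix $\beta$ by demanding $H^{2\beta-1,q}(G)\hookrightarrow L^{3q}(G)$, i.e.\ $\beta=\tfrac12+\tfrac{d}{3q}$, which is the \emph{strong-setting} requirement $v^3\in L^q(G)$. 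In the weak setting the nonlinearity only has to land in $X_0^{\sf w_1}=H^{-1,q}(G)\times L^q(G)$, and one must exploit the negative-order target: since $L^{dq/(d+q)}(G)\hookrightarrow H^{-1,q}(G)$, it suffices that $v\in L^{3dq/(d+q)}(G)$, which lowers the threshold to $\beta=\tfrac13+\tfrac{d}{3q}$ --- the value the paper takes. This is not a cosmetic difference: with $\rho_1=2$, $\beta_1=\beta$ the criticality condition $3\beta-1\le 2(\mu-1/p)$ gives $\mu_c=\tfrac1p+\tfrac{d}{2q}$ only for $\beta=\tfrac13+\tfrac{d}{3q}$; with your $\beta=\tfrac12+\tfrac{d}{3q}$ it gives $\mu_c=\tfrac1p+\tfrac14+\tfrac{d}{2q}$, contradicting your own assertion that the computation ``reduces to $\mu\ge\tfrac1p+\tfrac{d}{2q}$.'' Under Assumption (W1) one only has $\mu\ge\tfrac1p+\tfrac{d}{2q}$, so with your $\beta$ condition (A3) is simply not verified for the admissible range of $\mu$, and the proposition as stated is not reached (the whole point of the weak setting --- admitting $q\ge d$ --- would be lost).

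The same miscalculation propagates to your reading of the side conditions. With the correct $\beta=\tfrac13+\tfrac{d}{3q}$, the bound $q\le 2d$ is what guarantees $\beta\ge\tfrac12$, i.e.\ $2\beta-1\ge0$, so that $X_\beta^{\sf w_1}$ is a genuine function space and the product estimates make sense, while $q>d/(d-1)$ gives $\beta<d/3\le1$; you have these roles essentially reversed. Likewise, condition (W1)(i), $d/(3q)+1/s\le 2/3$, is exactly $\beta+1/s\le1$ for the paper's $\beta$ (needed so that $\theta=1-\beta<\tfrac12$ and $H^{\theta,s}((0,\infty);\cdot)\hookrightarrow L^\eta$ for all finite $\eta$); your heuristic explanation of where the $2/3$ comes from does not survive once $\beta$ is corrected, although the condition itself is then transparent.
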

\begin{proof}
 For $1<p<\infty$ we have
	\begin{align*}
	X_{\beta}^{\sf w_1}=H^{2\beta-1,q}(G) \times L^q(G) \mbox{ and } X_{\mu,p}^{\sf w_1}= B^{2\mu-2/p-1}_{q,p}(G)\times L^q(G) \quad \hbox{for } \mu\in (1/p,1].
	\end{align*}
	In this situation, we set 
	\begin{align*}
	\beta=\frac{d}{3q}+\frac{1}{3} \quad \hbox{and hence} \quad  \mu_c=\frac{1}{p}+\frac{d}{2q}
	\end{align*}
	which is admissible for all  $q$ in the specified range, compare also \cite[Theorem 3.2]{HieberPruess}.
	
Moreover, since $r\geq q$ and $s\geq p$
 \begin{align}\label{eq:weakI}
 z\in L^{\eta}((0,\infty);H^{2\beta-1,q}_N(G))\cap L^{\infty}((0,\infty); B^{2\mu-2/p-1}_{q,p,N}(G))\quad  \hbox{for $\eta$ large}.
 \end{align}
As in Subsection~\ref{subsec:strong} for the case $\beta=1/2$ we choose $\theta$ close to $1/2$ to obtain this 
	assertion; if $\beta>1/2$, we set $\theta=1-\beta$. In the situation of Corollary~\ref{theorem:stochlinex} ii), $2-2\theta\geq 2\beta-1$ is required, which is fulfilled for all $\theta\in [0,1/2)$. 
	For $r\geq q$ and $s\geq p$ we obtain \eqref{eq:weakI} for any $\eta\in (1,\infty)$ 
	 by taking $\theta$ arbitrary close to $1/2$. With these preparations at hand, the proof can be completed analogously to the one of Proposition~\ref{theorem:localpathwise}.
\end{proof}

\subsection{Weak setting II}\label{subsec:weakII}\mbox{}\\
Aiming to allow $q=2$ in the two dimensional case which is excluded in Assumption (W1), we consider the intermediate case
\begin{align*}
X_0^{\sf w_2}= H^{-1/2,q}(G) \times L^q(G) \quad \hbox{and}\quad X_1^{\sf w_2} =  H^{3/2,q}_N(G) \times L^q(G), 
\end{align*}
where $H^{-1/2,q}(G)=\cD(\aA^{-1/4})$ and $H^{3/2,q}_N(G)=\cD(\aA^{3/4})$. 
In this situation we have for  $1<p<\infty$
\begin{align*}
X_{\beta}^{\sf w_2}=H^{2\beta-1/2,q}_N(G) \times L^q(G) \mbox{ and } X_{\mu,p}^{\sf w_2}= B^{2\mu-2/p-1/2}_{q,p,N}(G)\times L^q(G) \quad \hbox{for } \mu\in (1/p,1].
\end{align*}

\vspace{.1cm}\noindent
{\bf Assumption (W2):}
Assume (Stoch), $2d/(2d-1)<q \leq 4d$ with $\frac{1}{p}+\frac{d}{2q}\leq \frac54$ and let $\mu\in (1/p,1]$ such that 
$\mu \geq \frac{1}{p}+\frac{d}{2q}-\frac{1}{4}$. Assume that one of the following conditions holds.
\begin{itemize}
\item[(i)] $d/(3q)+1/s \leq 7/12$, $h\in  L^s(\Omega;L^s((0,\infty); L^r(G)))$ and $\mu \leq 1/p + 3/4- 1/s$. 
\item[(ii)] $d/(3q)+1/s \leq 13/12$ and  $\aA^{1/2}h\in  L^s(\Omega;L^s((0,\infty); L^r(G)))$. 
\end{itemize}

\begin{proposition}\label{theorem:localpathwiseweakII}
Assume (W2) and let $Z=(z,\zeta)$ be the solution to \eqref{eq:stoch} given in Corollary \ref{theorem:stochlinex}. 
Then for $$(v_0,w_0)\in B^{2\mu-2/p-1/2}_{q,p,N}(G)\times L^q(G),$$ there exists $T_0>0$ and a unique solution $V=(v,w)^T$ to \eqref{eq:v} satisfying
\begin{align*}
v \in H^{1,p}_\mu ((0,T_0); H^{-1/2,q}(G)) \cap L^p_{\mu}((0,T_0); H^{3/2,q}_N(G)), \quad w \in H^{1,p} ((0,T_0); L^q(G)).
\end{align*}
\end{proposition}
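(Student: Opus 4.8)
The plan is to deduce Proposition~\ref{theorem:localpathwiseweakII} from the nonautonomous local existence result Proposition~\ref{theorem:localexistencedeterministic}, applied in the extrapolated ground space $X_0^{\sf w_2}=H^{-1/2,q}(G)\times L^q(G)$ with regularity space $X_1^{\sf w_2}=H^{3/2,q}_N(G)\times L^q(G)$, following the pattern already used for Propositions~\ref{theorem:localpathwise} and \ref{theorem:localpathwiseweakI}. Assumptions (A1) and (A2) come for free: by Remark~\ref{rem:A} together with Amann's interpolation-extrapolation scales \cite[Section V.1]{Ama95}, the operator $A$ still has a bounded $H^\infty$-calculus of angle $0$ and maximal $L^p$-regularity on $(X_0^{\sf w_2},X_1^{\sf w_2})$, and (A1) follows from the remark appended to it. By Remark~\ref{rem:nonlin}, checking (A3) for the shifted nonlinearities $F_1(t,V)=G_1(V+Z(t))$, $F_2(t,V)=G_2(V+Z(t))$ reduces to two points: that the FitzHugh--Nagumo polynomials $G_1,G_2$ satisfy (A3) in this scale for a suitable $\beta$, and that the stochastic convolution $z$ lies in $L^\eta((0,T);X_\beta^{\sf w_2})$ for every finite $\eta$ and in $L^\infty((0,\infty);X_{\mu,p}^{\sf w_2})$.

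For the first point, the value forced by the cubic term and the $-1/2$ smoothness of the ground space is
\[
\beta=\frac{d}{3q}+\frac16,\qquad X_\beta^{\sf w_2}=H^{2\beta-1/2,q}_N(G)\times L^q(G).
\]
Here $H^{2\beta-1/2,q}(G)\hookrightarrow L^{3\sigma}(G)$ with $\tfrac1{3\sigma}=\tfrac1{3q}+\tfrac1{6d}$, which is exactly the integrability making $v\mapsto v^3$ (and hence also the quadratic and linear terms) map $X_\beta^{\sf w_2}$ into $H^{-1/2,q}(G)=X_0^{\sf w_2}$; the Lipschitz bounds \eqref{eq:estimatef1}--\eqref{eq:estimatef2} with $m=1$, $\rho_1=2$, $\beta_1=\beta$ then follow as in the deterministic computation \cite[Theorem~3.2]{HieberPruess}. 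The associated critical weight is
\[
\mu_c=\frac1p+\beta-\frac{1-\beta_1}{\rho_1}=\frac1p+\frac{3\beta-1}{2}=\frac1p+\frac{d}{2q}-\frac14,
\]
so that $\mu_c\le1$ under the standing hypothesis $\tfrac1p+\tfrac{d}{2q}\le\tfrac54$, and every $\mu\in(1/p,1]$ with $\mu\ge\mu_c$ is admissible at the level of the nonlinearity.

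For the second point I would invoke Corollary~\ref{theorem:stochlinex}. In case~(i), $h\in L^s(\Omega;L^s((0,\infty);L^r(G)))$ gives $z\in H^{\theta,s}((0,\infty);H^{1-2\theta,r}_N(G))\cap L^\infty((0,\infty);B^{1-2/s}_{r,s,N}(G))$ for all $\theta\in[0,\tfrac12)$; mirroring Subsection~\ref{subsec:strong} one takes $\theta$ as large as the spatial embedding $H^{1-2\theta,r}_N(G)\hookrightarrow H^{2\beta-1/2,q}_N(G)$ permits (using $r\ge q$), i.e.\ $\theta=\tfrac34-\beta$, resp.\ $\theta$ close to $\tfrac12$ when $\beta\le\tfrac14$. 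The restriction $d/(3q)+1/s\le7/12$ is precisely $\theta\ge1/s$, so $H^{\theta,s}$ embeds into $L^\eta$ in time for every finite $\eta$, giving $z\in L^\eta((0,\infty);X_\beta^{\sf w_2})$; the restriction $\mu\le1/p+3/4-1/s$ together with $r\ge q$ yields $B^{1-2/s}_{r,s,N}(G)\hookrightarrow B^{2\mu-2/p-1/2}_{q,p,N}(G)$, hence $z\in L^\infty((0,\infty);X_{\mu,p}^{\sf w_2})$ and, since $s\ge p$, also $z\in L^p((0,T);X_{\mu,p}^{\sf w_2})$ for every $T>0$. In case~(ii), $\aA^{1/2}h\in L^s(\Omega;L^s((0,\infty);L^r(G)))$ gains one spatial derivative, $z\in H^{\theta,s}((0,\infty);H^{2-2\theta,r}_N(G))\cap L^\infty((0,\infty);B^{2-2/s}_{r,s,N}(G))$, and the same reasoning with $\theta\le\tfrac54-\beta$ and the milder bound $d/(3q)+1/s\le13/12$ delivers the two memberships. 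With (A1)--(A3) in hand, Proposition~\ref{theorem:localexistencedeterministic} produces $T_0>0$ and a unique solution of \eqref{eq:v} with $v\in H^{1,p}_\mu((0,T_0);H^{-1/2,q}(G))\cap L^p_\mu((0,T_0);H^{3/2,q}_N(G))$ and $w\in H^{1,p}_\mu((0,T_0);L^q(G))$; the improvement $w\in H^{1,p}((0,T_0);L^q(G))$ follows as in the proof of Proposition~\ref{theorem:localpathwise} from the scalar ODE $\partial_t w=-bw+cv$, the embedding $v\in BUC([0,T_0];X_{\mu,p}^{\sf w_2})$, and Remark~\ref{rem:embedding}.

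I expect the second point to be the real obstacle. The exponent $\beta$ is rigidly determined by the cubic nonlinearity and the negative order of the ground space, so there is no slack there; what must be verified is that the space--time regularity actually supplied for the stochastic convolution by Propositions~\ref{theorem:stochreg} and \ref{theorem:stochregimproved}, which is a fixed trade-off between the temporal smoothness $\theta$, the integrability exponent $s$, and the spatial smoothness $1-2\theta$ (respectively $2-2\theta$), can be routed simultaneously through both $X_\beta^{\sf w_2}$ and $X_{\mu,p}^{\sf w_2}$ while keeping $\theta<\tfrac12$. It is exactly this matching, and the resulting need to split into the two cases (i) and (ii), that produces the delicate numerology of Assumption~(W2); by contrast the abstract ingredients (the extrapolation step, the bounded $H^\infty$-calculus, and the final application of Proposition~\ref{theorem:localexistencedeterministic}) are routine once the correct scale has been fixed.
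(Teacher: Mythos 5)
Your proposal is correct and follows essentially the same route as the paper: the weak-II extrapolation scale with $\beta=\tfrac{d}{3q}+\tfrac16$, the critical weight $\mu_c=\tfrac1p+\tfrac{d}{2q}-\tfrac14$, the same choices $\theta=\tfrac34-\beta$ (case (i)) resp.\ $\theta\le\tfrac54-\beta$ (case (ii)) to route the stochastic convolution through $X_\beta^{\sf w_2}$ and $X_{\mu,p}^{\sf w_2}$, and the final application of Proposition~\ref{theorem:localexistencedeterministic} with the ODE argument for $w$. You in fact supply more detail than the paper (the Sobolev-embedding computation pinning down $\beta$ and the explicit verification that the numerical conditions in (W2) are exactly $\theta\ge 1/s$ and the Besov embedding for $\mu$), all of which checks out.
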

\begin{proof}
	We take
	\begin{align*}
	\beta=\frac{d}{3q}+\frac{1}{6} \quad \hbox{for } 2d/(2d-1)<q \leq 4, \hbox{ and } \mu_c=\frac{1}{p}+\frac{d}{2q}-\frac{1}{4}.
	\end{align*}
Hence,  $\frac{1}{p}+\frac{d}{2q}\leq \frac54$. To assure $z\in L^{\eta}((0,\infty);H^{2\beta-1/2,q}_N(G))$ for $\eta$ large we take $r\geq q$ and $s\geq p$. 
	If $\beta\leq 1/4$ we choose  $\theta$ close to $1/2$; if $\beta>1/4$ let $q$ so that $\beta< 3/4$, i.e., $q>4d/7$ and $s$ be large enough to have $\beta+1/s \leq 3/4$ and  take $\theta=3/4-\beta$.
	Furthermore, we assure $z\in L^{\infty}((0,\infty); B^{2\mu-2/p-1/2}_{q,p,N}(G))$ by choosing  $\mu\in(1/p,1/p+3/4-1/s]$ with $\mu\geq \mu_c$.
	
	Assuming $h$ as in the assumption of Corollary \ref{theorem:stochlinex}ii), we need $2-2\theta\geq 2\beta-1/2$ and $\theta s \geq 1$ for some $\theta\in [0,1/2)$. To assure this, we take $r\geq q$, 
	$s\geq p$ and choose $\theta$ similarly as above. 
\end{proof}	

\begin{corollary}\label{cor:bidomainsol}
	Let $(u,w):=(v+z,w)$, where $V=(v,w)^T$ is  given, depending on the setting,  by Proposition  
	\ref{theorem:localpathwise}, \ref{theorem:localpathwiseweakI} or \ref{theorem:localpathwiseweakII}
  for $(v_0,w_0)$ as in Proposition  \ref{theorem:localpathwise}, \ref{theorem:localpathwiseweakI} or \ref{theorem:localpathwiseweakII}, respectively,
	 and $Z=(z,\zeta)$ as in Corollary \ref{theorem:stochlinex}.
	Then
	$U=(u,w)$ is the unique, local solution to the 
	stochastic bidomain problem \eqref{eq:bidomainstochaddnoise} subject ot boundary conditions \eqref{eq:bc} and  initial conditions \eqref{BDESini}
	 satisfying the regularity properties stated in 
	Proposition  \ref{theorem:localpathwise}, \ref{theorem:localpathwiseweakI} or \ref{theorem:localpathwiseweakII}.
\end{corollary}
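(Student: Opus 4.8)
The plan is to assemble the corollary from the three preceding local-existence propositions together with the regularity of the stochastic convolution from Corollary~\ref{theorem:stochlinex}, and then to check that the transformation $U = V + Z$ indeed recovers a solution of the stochastic bidomain system \eqref{eq:bidomainstochaddnoise}. First I would fix one of the settings (strong, weak~I, or weak~II), so that the hypotheses of the corresponding Proposition (\ref{theorem:localpathwise}, \ref{theorem:localpathwiseweakI}, or \ref{theorem:localpathwiseweakII}) are in force, and observe that Assumption (Stoch) together with the respective additional hypotheses on $h$ (or $\aA^{1/2}h$) are precisely what is needed to invoke Corollary~\ref{theorem:stochlinex}. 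This yields, pathwise in $\omega \in \Omega$, a process $Z = (z,\zeta)^T$ with the stated space-time regularity, which in particular embeds into $L^p((0,T);X_{\mu,p}) \cap L^\eta((0,\infty);X_\beta)$ for $\eta$ large, matching the regularity required in Remark~\ref{rem:nonlin}. For $P$-a.e.\ $\omega$, the path $t \mapsto Z(t,\omega)$ is therefore an admissible datum for the deterministic theory, and the corresponding Proposition produces a unique local solution $V = (v,w)^T$ to \eqref{eq:v} on some interval $(0,T_0)$ with $T_0 = T_0(\omega) > 0$, in the regularity class listed there.

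Next I would verify that $U := V + Z$, i.e.\ $(u,w) := (v+z, w)$, solves \eqref{eq:bidomainstochaddnoise}. Since $Z$ is the mild solution of the linearized stochastic equation $\d Z + AZ\,\dt = H\,\dW$ (Proposition~\ref{theorem:stochreg}), and $V$ solves the pathwise deterministic equation $\partial_t V + AV = F(V+Z)$, adding the two identities gives, in the mild/integrated sense, $\d U + AU\,\dt = F(U)\,\dt + H\,\dW$; this is exactly the abstract reformulation of \eqref{BDES} with current noise $H = (h,0)^T$, equivalently the componentwise system \eqref{eq:vA} rewritten for $u = v+z$. The boundary conditions \eqref{eq:bc} and initial conditions \eqref{BDESini} are inherited: the Neumann-type conditions are encoded in the domains $\cD(\aA)$, $H^{2\alpha,q}_N(G)$ and the Besov spaces $B^s_{qp,N}$, so they hold for $V$, and $Z$ lies in the same $N$-type scale by Corollary~\ref{theorem:stochlinex}; and $U(0) = V(0) + Z(0) = (v_0,w_0)^T + 0 = (v_0,w_0)^T$ since the stochastic convolution vanishes at $t=0$. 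One should also note that $U$ inherits the regularity stated in the chosen Proposition, because $Z$ has at least as much time-space regularity as $V$ in each of the three settings (this is precisely what the parameter bookkeeping in Subsections~\ref{subsec:strong}--\ref{subsec:weakII} was arranged to guarantee), so $u = v+z$ lies in the same class as $v$.

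For uniqueness I would argue by contradiction: if $\tilde U = (\tilde u, \tilde w)^T$ were another local solution to \eqref{eq:bidomainstochaddnoise} in the stated regularity class, then $\tilde V := \tilde U - Z$ would solve \eqref{eq:v} with the same initial data and in the regularity class of the chosen Proposition, whence $\tilde V = V$ on the common interval of existence by the uniqueness part of Proposition~\ref{theorem:localexistencedeterministic} (applied through \ref{theorem:localpathwise}/\ref{theorem:localpathwiseweakI}/\ref{theorem:localpathwiseweakII}), and therefore $\tilde U = \tilde V + Z = V + Z = U$. Here one uses that $Z$ is a fixed $\mathcal{F}$-adapted process, independent of the choice of solution, so subtracting it is an unambiguous operation; the $\mathcal{F}$-adaptedness of $U$ follows from that of $Z$ and the fact that $V$ is a deterministic (measurable) functional of the path of $Z$.

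The main obstacle is not any single hard estimate — those are all absorbed into the cited propositions — but rather the careful matching of function-space parameters across the stochastic and deterministic halves of the argument: one must ensure that the regularity $Z$ possesses (governed by $s$, $r$, $\theta$ and the hypotheses on $h$ versus $\aA^{1/2}h$) is at least what the deterministic local-existence machinery demands for the datum in $F_1(t,V) = G_1(V+Z(t))$, $F_2(t,V) = G_2(V+Z(t))$ (governed by $p$, $q$, $\mu$, $\beta$, $\beta_j$, $\rho_j$ via Assumption (A3) and Remark~\ref{rem:nonlin}), in each of the three settings simultaneously. This compatibility is exactly what Assumptions (S), (W1), (W2) were designed to encode, so in the write-up the work is to cite the right Proposition for the right range of $q$ and to note that the transformation $U \mapsto V = U - Z$ is a bijection between solutions of \eqref{eq:bidomainstochaddnoise} and solutions of \eqref{eq:v} preserving the regularity class.
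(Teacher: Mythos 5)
Your argument is correct and is essentially the paper's own (the paper states this corollary without proof, treating it as immediate from Propositions \ref{theorem:localpathwise}, \ref{theorem:localpathwiseweakI}, \ref{theorem:localpathwiseweakII} together with the observation that $U\mapsto V=U-Z$ is a bijection between solutions of \eqref{eq:bidomainstochaddnoise} and of \eqref{eq:v} with the same data, and that $Z$ is a fixed adapted process so uniqueness transfers). One small caveat: your claim that $Z$ has ``at least as much time--space regularity as $V$'' fails in the strong setting under hypothesis (i) of (S), where $z$ only reaches $H^{1-2\theta,r}_N(G)$ spatially and hence $u=v+z$ need not lie in $L^p_\mu((0,T_0);H^{2,q}_N(G))$; the stated regularity class should be ascribed to $V=U-Z$ rather than to $U$ itself, exactly as the paper later does in Theorems \ref{thm:globald2} and \ref{thm:globd3}.
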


\begin{remark}
We note that in all settings the critical space is, similarly to the situation of the Navier-Stokes equations, given by $B^{d/q-1}_{q,p,N}(G)\times L^q(G)$. 
\end{remark}

\subsection{From weak I to weak II to strong} \mbox{} \\
Starting with initial values in the weak setting but with a function $h$ that fulfills the requirements of the strong setting, we will use now parabolic regularisation to improve the regularity of the 
solution given in Propositions \ref{theorem:localpathwiseweakI} and \ref{theorem:localpathwiseweakII} to the regularity stated in Proposition \ref{theorem:localpathwise}. 

\begin{lemma}\label{theorem:localsmoothing}
	\begin{itemize}
		\item[a)] Assume (W2) and in addition $p>4/3$ and  $1/p+d/(2q) \leq 1$.  Then the local solution $V=(v,w)$ obtained in Proposition \ref{theorem:localpathwiseweakI} satisfies  for any $0<\delta<T_0$
		\begin{align*}
			v \in H^{1,p} ((\delta,T_0); H^{-1/2,q}(G)) \cap L^p((\delta,T_0); H^{3/2,q}_N(G)) \cap C([\delta,T_0]; B^{3/2-2/p}_{q,p,N}(G)).
		\end{align*}
		\item[b)] Assume (S) and in addition $p>4/3$ and  $1/p+d/(2q) \leq 5/4$.  Then the local solution $V=(v,w)$ obtained in Proposition \ref{theorem:localpathwiseweakII} satisfies for any $0<\delta<T_0$
		\begin{align*}
			v \in H^{1,p} ((\delta,T_0); L^q(G)) \cap L^p((\delta,T_0); H^{2,q}_N(G)) \cap C([\delta,T_0]; B^{2-2/p}_{q,p,N}(G)).
		\end{align*}
		\item[c)] Assume (S) and in addition $p>4/3$ and $1/p+d/(2q) \leq 1$. Then the local solution $V=(v,w)$ obtained in Proposition \ref{theorem:localpathwiseweakI} satisfies for any $0<\delta<T_0$
		\begin{align*}
			v \in H^{1,p} ((\delta,T_0); L^q(G)) \cap L^p((\delta,T_0); H^{2,q}_N(G)) \cap C([\delta,T_0]; B^{2-2/p}_{q,p,N}(G)).
		\end{align*}
	\end{itemize}
\end{lemma}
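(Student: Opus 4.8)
The plan is to establish all three parts by one mechanism: \emph{instantaneous parabolic smoothing in the coarser setting, followed by a restart of Proposition~\ref{theorem:localexistencedeterministic} in the finer setting, and a blow-up argument propagating the gained regularity over the whole interval $(\delta,T_0]$}. I will carry out part~b) in detail; a) and c) are identical up to the obvious substitution of source and target spaces, except that for c) with $4/3<p\leq 2$ the weak~I trace space $B^{1-2/p}_{q,p}(G)$ is too rough to restart directly in the strong setting (that would force a weight $\mu^\sharp$ with $1/p<\mu^\sharp\leq 1/2$, i.e.\ $p>2$), so there one first restarts in the weak~II setting as in a), and then in the strong setting.

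First I would fix $0<\delta<T_0$, choose an auxiliary $\delta''\in(0,\delta)$, and use the instantaneous smoothing of Remark~\ref{rem:embedding}: restricting the weak~II solution of Proposition~\ref{theorem:localpathwiseweakII} to $[\delta'',T_0]$ removes the time weight, so that $v\in H^{1,p}((\delta'',T_0);H^{-1/2,q}(G))\cap L^p((\delta'',T_0);H^{3/2,q}_N(G))\hookrightarrow C([\delta'',T_0];B^{3/2-2/p}_{q,p,N}(G))$; in particular $v(t)$ lies in the weak~II trace space $X^{\sf w_2}_{1,p}$ for every $t\in[\delta'',T_0]$, with a bound uniform on this interval. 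I would then observe that Assumption~(S) is a condition on $h$ strong enough for the \emph{strong} setting, so by Corollary~\ref{theorem:stochlinex} and Remark~\ref{rem:nonlin} the stochastic convolution $Z=(z,\zeta)$ has exactly the regularity required to verify (A1)--(A3) for \eqref{eq:v} in $X_0^s=L^q(G)^2$, $X_1^s=\cD(\aA)\times L^q(G)$ --- just as in the proof of Proposition~\ref{theorem:localpathwise}, with (A1)--(A2) coming from the bounded $H^\infty$-calculus of $A$ (Remark~\ref{rem:A}).

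The core step is an embedding: I would verify that the extra hypotheses $p>4/3$ and $\tfrac1p+\tfrac{d}{2q}\leq\tfrac54$ leave room for a weight $\mu^\sharp\in(1/p,1]$ with $\mu^\sharp\geq\mu_c^s=\tfrac1p+\tfrac{d}{2q}-\tfrac12$ (so (A3) holds), $\mu^\sharp\leq 3/4$ (so that $B^{3/2-2/p}_{q,p,N}(G)\hookrightarrow B^{2\mu^\sharp-2/p}_{q,p,N}(G)=X^s_{\mu^\sharp,p}$ by Besov embedding, with matching Neumann subscripts, since $\mu^\sharp\leq 3/4$ keeps the target smoothness below the trace threshold $1+\tfrac1q$), and $\mu^\sharp\leq\tfrac1p+\tfrac12-\tfrac1s$ in case~(i). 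Fixing such a $\mu^\sharp$, I would apply Proposition~\ref{theorem:localexistencedeterministic} in the strong setting with initial time $\delta''$ (the equation is non-autonomous, so this is that proposition after a time shift) and initial datum $(v(\delta''),w(\delta''))\in X^s_{\mu^\sharp,p}$, obtaining a solution $\widetilde V$ with the asserted strong regularity on a maximal interval $[\delta'',T_\ast)$. Since $\widetilde V$ is also a weak~II solution of \eqref{eq:v} with the same value at $\delta''$ as $V$, uniqueness in Proposition~\ref{theorem:localpathwiseweakII} forces $\widetilde V=V$ on $[\delta'',T_\ast)\cap[\delta'',T_0]$.

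Finally I would exclude $T_\ast\leq T_0$: by the first step $v\in C([\delta'',T_0];X^{\sf w_2}_{1,p})\hookrightarrow C([\delta'',T_0];X^s_{\mu^\sharp,p})$, so if $T_\ast\leq T_0$ then $\lim_{t\to T_\ast}v(t)=v(T_\ast)$ would exist in $X^s_{\mu^\sharp,p}$, contradicting the blow-up dichotomy of Corollary~\ref{corollary:continuation} (which applies since $V_{\mu^\sharp,p}=X_{\mu^\sharp,p}$ for the polynomial nonlinearity). Hence $V$ has strong regularity on $[\delta'',T_0]$; a second use of Remark~\ref{rem:embedding} together with $\delta>\delta''$ then gives $v\in H^{1,p}((\delta,T_0);L^q(G))\cap L^p((\delta,T_0);H^{2,q}_N(G))\cap C([\delta,T_0];B^{2-2/p}_{q,p,N}(G))$, and the ODE $\partial_t w=-bw+cv$ with $v\in L^p((\delta,T_0);L^q(G))$ upgrades $w$ to $H^{1,p}((\delta,T_0);L^q(G))$, exactly as in Proposition~\ref{theorem:localpathwise}. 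The step I expect to be the main obstacle is the embedding: the short but delicate index count showing the coarser trace exponent lands inside the admissibility window $(1/p,1]$ of some finer-setting weight \emph{and} that the Neumann boundary subscripts match --- this is precisely what the hypotheses $p>4/3$ and the bound on $\tfrac1p+\tfrac{d}{2q}$ are calibrated to secure.
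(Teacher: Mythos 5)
Your proof is correct and follows essentially the same route as the paper: restart the evolution in the finer scale using the instantaneous trace $v(t)$ as new initial datum, with the weight $\mu\leq 3/4$ admissible exactly under $p>4/3$ and $\mu_c\leq 3/4$ (i.e.\ the stated bounds on $1/p+d/(2q)$), and with c) obtained by composing weak I $\to$ weak II $\to$ strong. You additionally make explicit the gluing by uniqueness and the propagation of the gained regularity to all of $(\delta,T_0]$ via the blow-up dichotomy of Corollary~\ref{corollary:continuation}, details the paper's two-line proof leaves implicit.
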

\begin{proof}
	Let $v$ be solution in the weak-I-setting. Then  $v(t)\in B^{1-2/p}_{q,p,N}(G)$ and we may use $v(t)$ as an  initial value within  the weak-II-setting 
	(Proposition \ref{theorem:localpathwiseweakII}) provided $\mu\leq 3/4$ is admissible, i.e., provided  $p>4/3$ and provided  the critical weight $\mu_c$ in the weak-II-setting satifies 
	$\mu_c \leq 3/4$. \\
	Furthermore, let $v$ be a solution in the weak-II-setting. Then $v(t)\in B^{3/2-2/p}_{q,p,N}(G)$ and we may  use $v(t)$ as an initial value in the strong setting 
	(Proposition \ref{theorem:localpathwise}) provided  $\mu\leq 3/4$ is admissible, i.e., provided $p>4/3$ and provided  the critical weight $\mu_c$ in the strong setting satisfies $\mu_c \leq 3/4$.
\end{proof}

\section{Global Existence for $d=2$ and $d=3$} \label{sec:global}

\subsection{Global existence for $d=2$} \mbox{} \\

\noindent
Our main result in the two dimensional setting reads as follows.

\begin{theorem}\label{thm:globald2}
	Let $d=2$, assume  (BD), (S) and in addition $p \in[2,\infty)$ and $q \in [2,4]$ with  $\frac{1}{p}+\frac{1}{q}\geq \frac{1}{2}$. 
	Then for any $T>0$ and for each $$(v_0,w_0)\in B^{2/q-1}_{q,p,N}(G) \times L^q(G),$$ equation \eqref{eq:v} admits  a unique strong solution $V=(v,w)^T$ within the regularity class
	\begin{align*}
	v &\in H^{1,p}_\mu ((0,T); H^{-1,q}(G)) \cap L^p_{\mu}((0,T); H^{1,q}(G))\cap C([0,T]; B^{2/q-1}_{q,p,N}(G)), \\
	w &\in H^{1,p} ((0,T); L^q(G)),
	\end{align*}
	and for any $\delta\in (0,T)$
	\begin{align*}
	v \in H^{1,p} ((\delta,T); L^q(G)) \cap L^p((\delta,T); H^{2,q}_N(G))\cap C([\delta,T]; B^{2-2/p}_{q,p,N}(G)).  
	\end{align*}
	The function $U=V+Z$, where $Z=(z,\zeta)^T$ is given by \eqref{eq:stochconvolution},  is the unique, global, pathwise solution to the stochastic bidomain equations \eqref{eq:bidomainstochaddnoise} with $V$ in the regularity class given above.	
\end{theorem}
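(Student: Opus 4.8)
The plan is to combine the pathwise local theory of Section~3 with the blow-up criteria of Section~2 (Corollary~\ref{corollary:continuation} and, decisively, the Serrin-type Proposition~\ref{theorem:serrin}), feeding the latter with an \emph{a priori} estimate in the energy norm that is transported to the critical norm by parabolic interpolation carried out \emph{inside the extrapolated weak scale}. Fix $\omega$ outside a $P$-null set, so that $Z=(z,\zeta)^{\sf T}$ is a fixed function with the pathwise regularity of Corollary~\ref{theorem:stochlinex}; all constants below may depend on $\omega$ through the (almost surely finite) norms of $z$ that occur. For $q\in(2,4]$ Proposition~\ref{theorem:localpathwiseweakI} yields a unique local solution $V=(v,w)^{\sf T}$ of \eqref{eq:v} in the weak-I class on a maximal interval $[0,T_{\max})$; for $q=2$ one argues identically in the weak-II setting with Proposition~\ref{theorem:localpathwiseweakII} (the restrictions $p\in[2,\infty)$, $q\in[2,4]$ ensure the hypotheses of these propositions and of Lemma~\ref{theorem:localsmoothing} are met). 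Uniqueness on $[0,T_{\max})$ is inherited from the cited propositions. By Remark~\ref{rem:embedding} together with Lemma~\ref{theorem:localsmoothing}, the solution regularises instantaneously, so that for every $\delta\in(0,T_{\max})$ and every $T<T_{\max}$
\begin{align*}
v\in H^{1,p}((\delta,T);L^q(G))\cap L^{p}((\delta,T);H^{2,q}_N(G))\cap C([\delta,T];B^{2-2/p}_{q,p,N}(G)).
\end{align*}
Since $d=2$ and $p,q\ge2$, this gives in particular $v(\delta)\in L^2(G)\cap L^q(G)$, $v\in L^2((\delta,T);H^{1,2}(G))$ and $w(\delta)\in L^q(G)$, which is exactly what is needed to run the $L^2$-energy method below.

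Testing the $v$-equation in \eqref{eq:vA} against $v$ in the $L^2(G)$-pairing on $[\delta,t]$ and using that $\langle\aA v,v\rangle\ge c_0\,\|v\|_{H^{1,2}(G)}^2-c_1\|v\|_{L^2(G)}^2$ (Proposition~\ref{bidomopthm}) together with the dissipative cubic term $-\langle v^3,v\rangle=-\|v\|_{L^4}^4$, one absorbs the remaining contributions --- the quadratic and linear terms in $v$, the coupling $-\langle w,v\rangle$, and every term containing $z$ --- partly into $\|v\|_{L^4}^4$ and otherwise into $C\|v\|_{L^2}^2+C\|w\|_{L^2}^2+C(\|z\|_{L^4}^4+\|z\|_{L^2}^2)$ by Young's inequality; here $z\in L^\infty((0,\infty);L^2(G))\cap L^4_{loc}((0,\infty);L^4(G))$ by Corollary~\ref{theorem:stochlinex} and the Sobolev embeddings in dimension two, using Assumption (S). Adding the elementary estimate coming from $\partial_t w=-bw+cv$ and invoking Gronwall's lemma yields, for every $T<T_{\max}$,
\begin{align*}
v\in L^\infty((\delta,T);L^2(G))\cap L^2((\delta,T);H^{1,2}(G))\cap L^4((\delta,T);L^4(G)),\qquad w\in L^\infty((\delta,T);L^q(G)),
\end{align*}
with norms controlled in terms of $\delta$, $T$, the data and the path only --- crucially \emph{not} in terms of $T_{\max}$ --- the $L^q$-bound for $w$ following from $\tfrac{d}{dt}\|w\|_{L^q}\le-b\|w\|_{L^q}+c\|v\|_{L^q}$ and $v\in L^2((\delta,T);L^q(G))$.

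It remains to reach the critical norm of Proposition~\ref{theorem:serrin}, which in the weak-I scale (the extrapolation space $X_{-1/2}$) is $X_{\mu_c}=B^{2/q-1}_{q,p,N}(G)\times L^q(G)$ with $\mu_c=\tfrac1p+\tfrac1q$ --- the very space of the initial datum. The two-dimensional Gagliardo--Nirenberg inequality $\|v\|_{L^q}\le C\|v\|_{L^2}^{2/q}\|v\|_{H^{1,2}}^{1-2/q}$ turns the energy bounds into $v\in L^{2q/(q-2)}((\delta,T);L^q(G))$, and the hypothesis $\tfrac1p+\tfrac1q\ge\tfrac12$ gives $\tfrac{2q}{q-2}\ge p$, whence
\begin{align*}
v\in L^{p}((\delta,T);L^q(G))\hookrightarrow L^{p}((\delta,T);B^{2/q-1}_{q,p,N}(G))
\end{align*}
(the last embedding valid since $2/q-1\le0$ and $p\ge2$), with norm bounded uniformly in $T<T_{\max}$, while $w\in L^\infty((\delta,T);L^q)\subset L^{p}((\delta,T);L^q)$. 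Together with the finite contribution of the fixed solution on $[0,\delta]$ this shows that, if $T_{\max}<\infty$, then $V\in L^p((0,T_{\max});X_{\mu_c})$, contradicting Proposition~\ref{theorem:serrin}(ii); hence $T_{\max}=\infty$. The stated regularity of $V$ on $[0,T]$ and on $[\delta,T]$ then follows from Propositions~\ref{theorem:localpathwiseweakI}, \ref{theorem:localpathwiseweakII} and Lemma~\ref{theorem:localsmoothing}, and $U:=V+Z$ is the unique global pathwise solution of \eqref{eq:bidomainstochaddnoise} by Corollary~\ref{cor:bidomainsol}. For $q=2$ one works instead in the weak-II scale $X_{-1/4}$, where $X_{\mu_c}=B^{0}_{2,p,N}(G)\times L^2(G)\supset L^2(G)\times L^2(G)$, so that the energy bound $v,w\in L^\infty((\delta,T);L^2(G))$ already places $V$ in $L^p((0,T);X_{\mu_c})$ for all $T<T_{\max}$.

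The main obstacle is precisely this last passage: plain energy estimates live in the $L^2$-based scale, whereas the continuation criteria are phrased in the (Besov) critical trace norm of the chosen functional-analytic setting. The resolution is the move to the negative-order extrapolation space $X_{-1/2}$ (resp.\ $X_{-1/4}$): it lowers the critical trace space to $B^{d/q-1}_{q,p,N}(G)\times L^q(G)$, which under $\tfrac1p+\tfrac1q\ge\tfrac12$ is reached by parabolic interpolation of the energy bounds via Gagliardo--Nirenberg. A secondary, routine but delicate, task is to verify that each term produced by expanding $F(v+z)$ is pathwise controlled in the precise norm demanded by the Gronwall argument --- this is where the exponent bookkeeping of Assumption (S) enters.
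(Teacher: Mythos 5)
Your overall architecture (pathwise local solution in the extrapolated weak scale, instantaneous smoothing, $L^2$-energy estimates, then the Serrin criterion of Proposition~\ref{theorem:serrin}) is the same as the paper's, and the energy estimate itself reproduces Lemma~\ref{lem:eb}. However, the final step --- the only genuinely delicate one --- contains a real error: you identify the space $X_{\mu_c}$ appearing in Proposition~\ref{theorem:serrin} with the \emph{trace} space $X_{\mu_c,p}=(X_0,X_1)_{\mu_c-1/p,p}=B^{2/q-1}_{q,p,N}(G)\times L^q(G)$. In the notation of Section~2.2, $X_{\mu_c}$ is the \emph{complex} interpolation space $(X_0,X_1)_{\mu_c}$, which in the weak-I scale with $\mu_c=\tfrac1p+\tfrac1q$ is $H^{2\mu_c-1,q}(G)\times L^q(G)=H^{2/p+2/q-1,q}(G)\times L^q(G)$. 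This has $2/p$ more derivatives than your target, and the smoothness $2/p+2/q-1$ is strictly positive in the admissible range (e.g.\ $H^{1,2}$ for $p=q=2$). Your Gagliardo--Nirenberg step $\|v\|_{L^q}\lesssim\|v\|_{L^2}^{2/q}\|v\|_{H^{1,2}}^{1-2/q}$ discards the derivative information and only yields $v\in L^{p}((\delta,T);L^q(G))$, which does not place $v$ in $L^p((0,T_{\max});X_{\mu_c})$; hence the contradiction with Proposition~\ref{theorem:serrin}(ii) is not obtained and the argument does not close. (The same conflation occurs in your $q=2$ remark: in the weak-II scale $X_{\mu_c}=H^{2\mu_c-1/2,2}(G)\times L^2(G)=H^{2/p,2}(G)\times L^2(G)$, not $B^0_{2,p}\times L^2$, so the plain $L^\infty L^2$ bound is again not sufficient by itself.)

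The repair is exactly the paper's interpolation: from $v\in L^\infty((\delta,T);L^2(G))\cap L^2((\delta,T);H^{1,2}(G))$ and $p\ge2$ one has
\begin{align*}
\|v\|_{H^{2/p,2}(G)}\lesssim \|v\|_{L^2(G)}^{1-2/p}\,\|v\|_{H^{1,2}(G)}^{2/p},
\end{align*}
whence $v\in L^p((\delta,T);H^{2/p,2}(G))$, and the two-dimensional Sobolev embedding $H^{2/p,2}(G)\hookrightarrow H^{2/p+2/q-1,q}(G)$ (equality of Sobolev indices, $q\ge2$) lands you precisely in $L^p((\delta,T);X_{\mu_c})$ with a bound uniform in $T<T_{\max}$; together with $w\in L^\infty L^2\hookrightarrow L^pL^q$-type control this contradicts Proposition~\ref{theorem:serrin}(ii) and gives $T_{\max}=\infty$. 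Note that this route does not use the inequality between $\tfrac1p+\tfrac1q$ and $\tfrac12$ in the way your $\tfrac{2q}{q-2}\ge p$ computation does; the essential constraints are $p,q\ge2$ and the admissibility $\mu_c\le1$ from Assumption (W1). The rest of your write-up (uniqueness, regularity via Lemma~\ref{theorem:localsmoothing}, passage to $U=V+Z$ via Corollary~\ref{cor:bidomainsol}) is consistent with the paper.
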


\begin{remark}
Note that $L^2(G)\hookrightarrow B^{2/q-1}_{qp}(G)$ for all $p,q\geq2$, and that  hence in particular initial values $(v_0,w_0) \in L^2(G) \times L^2(G)$ are covered by Theorem \ref{thm:globald2}. 
The combination of stochastic maximal regularity with the deterministic theory of critical spaces shows thus  that the stochastic bidomain equations \eqref{BDES} are not only globally well posed for 
these data in the weak sense as shown  by Bendahmane and Karlsen in \cite{BK19}, 
but even in the {\em strong} sense.  
\end{remark}

The proof of Theorem \ref{thm:globald2} is based on two facts: knowing from Proposition \ref{bidomopthm} b) that $D(\aA^{1/2})=H^{1,q}(G)$, it is possible to derive energy bounds for the 
solution of \eqref{eq:v}, and secondly these bounds can be related within the weak-II-setting  to the blow up criteria in critical spaces given in Proposition \ref{theorem:serrin}. 

The following lemma gives a priori bounds for $v$ and $w$ independent of the space dimension and holds hence for $d=2$ and $d=3$. 

\begin{lemma}[\textit{A priori} bound on $v$ and $w$]\label{lem:eb}
Let $q\geq 2$, $\delta>0$ and let $V=(v,w)$ be the solution to  
\eqref{eq:v}
obtained in one of the Propositions  \ref{theorem:localpathwise}, \ref{theorem:localpathwiseweakI} 
or \ref{theorem:localpathwiseweakII} on some time interval $(0,T)$. Then 
\begin{align*}
v  \in L^{\infty}((\delta,T);L^2(G)) \cap L^2((\delta,T);H^{1,2}(G))\cap L^4((\delta,T);L^4(G)), \quad w  \in L^{\infty}((\delta,T);L^2(G)),
\end{align*}
and there exists a constant $C>0$ such that 
\begin{multline*}
\|v(t)\|_{L^2(G)}^2+\|w(t)\|_{L^2(G)}^2 +\int_{\delta}^{t}\|\aA^{1/2}v(s)\|_{L^2(G)}^2\ds+\int_{\delta}^{t}\|v(s)\|_{L^4(G)}^4\ds\\
\leq C \big(\|v(\delta)\|_{L^2(G)}^2+\|w(\delta)\|_{L^2(G)}^2+\int_{\delta}^{T}\|z(s)\|_{L^2(G)}^2+\|z(s)\|_{L^4(G)}^4 \ds \big)e^{cT}, \quad 0<\delta < t < T.
\end{multline*}
\end{lemma}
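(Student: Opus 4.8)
The plan is to test the pathwise system \eqref{eq:vA} against $(v,w)$ in the duality of $L^{2}(G)$ and to close the resulting differential inequality by Grönwall's lemma, using two structural facts: the cubic nonlinearity produces the dissipative term $-\|v\|_{L^{4}(G)}^{4}$ when tested against $v$, and $\langle \aA v,v\rangle_{L^{2}(G)}=\|\aA^{1/2}v\|_{L^{2}(G)}^{2}\ge 0$, since by Proposition~\ref{bidomopthm} and assumption (BD)(a) the $L^{2}$-realisation of $\aA$ (extended to $L^{2}(G)$ by $\aA 1=0$) is self-adjoint and positive semi-definite. All remaining nonlinear contributions are products of $v$'s and $z$'s of degree at most three, hence subordinate, and will be absorbed by the two dissipative terms or by $\|v\|_{L^{2}(G)}^{2}$.

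First I would make the computation rigorous. By the instantaneous smoothing of Remark~\ref{rem:embedding}, on $[\delta,T]$ the solution $v$ enjoys, in each of the three settings of Propositions~\ref{theorem:localpathwise}, \ref{theorem:localpathwiseweakI} and \ref{theorem:localpathwiseweakII}, maximal parabolic regularity in $L^{p}((\delta,T);X_{1})\cap H^{1,p}((\delta,T);X_{0})$; because $q\ge 2$ and $d\in\{2,3\}$ the embeddings $H^{2,q}(G),H^{3/2,q}(G),H^{1,q}(G)\hookrightarrow L^{2}(G)$ and $H^{1,q}(G)\hookrightarrow H^{1,q'}(G)$, $H^{3/2,q}(G)\hookrightarrow H^{1/2,q'}(G)$ all hold, which places $v$ (and in particular $v(\delta)$) in $L^{2}(G)$ and makes $v(t)$ an admissible test function for the first equation of \eqref{eq:vA} (read in $L^{q}(G)$, $H^{-1/2,q}(G)$ or $H^{-1,q}(G)$ according to the setting). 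Consequently $t\mapsto\tfrac12\|v(t)\|_{L^{2}(G)}^{2}$ is absolutely continuous with derivative $\langle\partial_{t}v,v\rangle$, by the standard Lions-type lemma together with a density argument; testing the $w$-equation against $w$ is immediate from $w\in H^{1,p}((\delta,T);L^{q}(G))$.

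Next comes the estimate itself. Adding the two tested equations gives, for $0<\delta<t<T$,
\begin{multline*}
\tfrac12\frac{d}{dt}\big(\|v\|_{L^{2}(G)}^{2}+\|w\|_{L^{2}(G)}^{2}\big)+\|\aA^{1/2}v\|_{L^{2}(G)}^{2}+\|v\|_{L^{4}(G)}^{4} \\
= \int_{G}\big(-3v^{2}z-3vz^{2}-z^{3}+(a+1)(v^{2}+2vz+z^{2})-w\big)v\dx -a\|v\|_{L^{2}(G)}^{2}-b\|w\|_{L^{2}(G)}^{2}+c\langle v,w\rangle.
\end{multline*}
Each term on the right is handled by Hölder's inequality, the interpolation inequality $\|v\|_{L^{3}(G)}\le\|v\|_{L^{2}(G)}^{1/3}\|v\|_{L^{4}(G)}^{2/3}$, and Young's inequality: the contributions involving $v$ at top order ($\int v^{3}z$, $\int v^{2}z^{2}$, $\int z^{3}v$, $\int v^{3}$, $\int v^{2}z$) are dominated, via Young, by $\tfrac12\|v\|_{L^{4}(G)}^{4}$ plus lower-order terms, and the latter together with everything else is bounded by $C\big(\|v\|_{L^{2}(G)}^{2}+\|w\|_{L^{2}(G)}^{2}\big)+C\big(\|z\|_{L^{2}(G)}^{2}+\|z\|_{L^{4}(G)}^{4}\big)$ with $C=C(a,b,c,G)$; no portion of $\|\aA^{1/2}v\|_{L^{2}(G)}^{2}$ is needed on the right. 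Grönwall's lemma on $[\delta,t]$ then yields the pointwise bound for $\|v(t)\|_{L^{2}(G)}^{2}+\|w(t)\|_{L^{2}(G)}^{2}$, and integrating the differential inequality once more gives the bounds for $\int_{\delta}^{t}\|\aA^{1/2}v\|_{L^{2}(G)}^{2}\ds$ and $\int_{\delta}^{t}\|v\|_{L^{4}(G)}^{4}\ds$; since $\|\aA^{1/2}v\|_{L^{2}(G)}$ is, by Poincaré's inequality, equivalent to $\|v-\overline v\|_{H^{1,2}(G)}$ and $|\overline{v(s)}|\le C\|v(s)\|_{L^{2}(G)}$, one obtains $v\in L^{\infty}((\delta,T);L^{2}(G))\cap L^{2}((\delta,T);H^{1,2}(G))\cap L^{4}((\delta,T);L^{4}(G))$ and $w\in L^{\infty}((\delta,T);L^{2}(G))$. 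The right-hand side is $\PP$-a.s.\ finite because the regularity of $z=z(\omega)$ furnished by Corollary~\ref{theorem:stochlinex}, combined with the Sobolev embeddings used in Section~\ref{sec:pre} and Section~3, gives in each setting $z\in L^{\infty}((\delta,T);L^{2}(G))\cap L^{4}((\delta,T);L^{4}(G))$ for $\PP$-a.e.\ $\omega$.

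I expect the main obstacle to be precisely the rigorous justification of the energy identity in the two weak settings, where a priori $v$ only has negative-order spatial regularity, so that neither $\langle\partial_{t}v,v\rangle=\tfrac12\frac{d}{dt}\|v\|_{L^{2}(G)}^{2}$ nor $\langle\aA v,v\rangle=\|\aA^{1/2}v\|_{L^{2}(G)}^{2}$ is immediate; this is exactly where one invokes the instantaneous smoothing of Remark~\ref{rem:embedding} together with the elementary but essential observation that $q\ge 2$ and $d\le 3$ force precisely the embeddings listed above. A secondary point is the null space of $\aA$: one only controls $\|\aA^{1/2}v\|_{L^{2}(G)}$, hence $\|v-\overline v\|_{H^{1,2}(G)}$, rather than $\|v\|_{H^{1,2}(G)}$ directly, but this is harmless since the mean $\overline v$ is already dominated by $\|v\|_{L^{2}(G)}$.
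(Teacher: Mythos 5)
Your proposal is correct and follows essentially the same route as the paper: test \eqref{eq:vA} against $v$ and the $w$-equation against $w$ in $L^2(G)$, keep the dissipative terms $\|\aA^{1/2}v\|_{L^2(G)}^2$ and $\|v\|_{L^4(G)}^4$ on the left, absorb all mixed $v$--$z$ contributions by H\"older and Young into $\tfrac12\|v\|_{L^4(G)}^4$ plus $\|z\|_{L^2(G)}^2+\|z\|_{L^4(G)}^4$ and lower-order $L^2$ terms, and conclude with Gr\"onwall and $\cD(\aA^{1/2})=H^{1,q}(G)$. Your additional remarks on justifying the energy identity in the weak settings via instantaneous smoothing and on handling the kernel of $\aA$ are sensible refinements of points the paper passes over silently, but they do not change the argument.
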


\begin{proof}
Multiplying equation \eqref{eq:v} by $v$ in $L^2(G)$ yields
\begin{align*}
 \frac12 \partial_t &\|v\|_{L^2(G)}^2 + \left<\aA v,v\right> +\|v\|_{L^4(G)}^4+ a\|v\|_{L^2(G)}^2+3\|vz\|_{L^2(G)}^2\\
& \leq \|v^3z\|_{L^1(G)} + \|z^3v\|_{L^1(G)}
+\|wv\|_{L^1(G)} + 2(\|v^3\|_{L^1(G)}+2\|v^2z\|_{L^1(G)}+\|vz^2\|_{L^1(G)})\\
&\leq \|v^3\|_{L^{\frac43}(G)}\|z\|_{L^4(G)} \! +\! \|z^3\|_{L^{\frac43}(G)}\|v\|_{L^4(G)} 
+ \! \|w\|_{L^{2}(G)}\|v\|_{L^2(G)}\\
& \quad + 2 (\|v^2\|_{L^{2}(G)}\|v\|_{L^2(G)}+2\|v^2\|_{L^{2}(G)}\|z\|_{L^2(G)}+\|v\|_{L^{2}(G)}\|z^2\|_{L^2(G)})\\
&\leq \|v\|_{L^{4}(G)}^3\|z\|_{L^4(G)} + \|z\|_{L^{4}(G)}^3\|v\|_{L^4(G)}
+\|w\|_{L^{2}(G)}\|v\|_{L^2(G)}\\
&\quad + 2(\|v\|_{L^{4}(G)}^2\|v\|_{L^2(G)}+2\|v\|_{L^{4}(G)}^2\|z\|_{L^2(G)}+\|v\|_{L^{2}(G)}\|z\|_{L^4(G)}^2)\\
&\leq \frac12 \|v\|_{L^{4}(G)}^4+c\|z\|_{L^4(G)}^4 + c\|v\|_{L^2(G)}^2+c\|z\|_{L^2(G)}^2+\frac{b}{4}\|w\|_{L^{2}(G)}^2.
\end{align*}
The equation for $w$ gives
\begin{align*}
\frac12 \partial_t \|w\|_{L^2(G)}^2+b\|w\|_{L^2(G)}^2 \leq \frac{b}{4} \|w\|_{L^{2}(G)}^2+c\|z\|_{L^2(G)}^2 + c\|v\|_{L^2(G)}^2.
\end{align*}
Using the fact that $\cD(\aA^{1/2})=H^{1,q}(G)$, see Proposition \ref{bidomopthm} and \eqref{eq:interpolation}, we obtain $$\left<\aA v,v\right> + \norm{v}^2= \|\aA^{1/2}v\|_{L^2(G)}^2 + \norm{v}^2 \geq C \|v\|_{H^{1,2}(G)}^2,$$ and 
Gronwall's inequality yields  the assertion.
\end{proof}

\begin{remark}\label{nohoped3}
It is interesting to consider the Sobolev indices of the above bounds for $v$. The first two terms have index $-d/2$, the third one has index $-(1/2+d/4)$. For $d=3$ both indices are strictly less than 
$-1$, the Sobolev index of critical spaces. Hence, for $d=3$ there is no hope to prove global existence results for $v$ based on these elementary energy estimates. However, for $d=2$, the energy bounds relate
via interpolation to the critical space $X_{\mu_c}$ given by $X_{\mu_c}=H^{2/p+2/q-1,q}(G)\times L^q(G)$. 
\end{remark}

\begin{proof}[Proof of Theorem~\ref{thm:globald2}]
Due to $p,q\geq 2$ and $\frac{1}{p}+\frac{1}{q}\leq \frac{1}{2}$, we have the embeddings 
\begin{align*}
L^{\infty}((\delta,T); L^2(G))\cap L^2((\delta,T);H^{1,2}(G)) \hookrightarrow L^p((\delta,T); H^{2/p,2}(G)) \hookrightarrow L^p((\delta,T); H^{2/p+2/q-1,q}(G)).
\end{align*}
The energy bounds imply $w \in L^p((\delta,T); L^q(G))$.  Here, $$X_{\mu_c}=H^{2/p+2/q-1,q}(G)\times L^q(G),$$  and hence  the global existence  
follows from Proposition  \ref{theorem:serrin}. 
\end{proof}

\subsection{Global existence for $d=3$}\mbox{}\\

\noindent
We now state our main result in the three dimensional setting.  

\begin{theorem}\label{thm:globd3}
Let $d=3$, assume (BD), (S) and in addition $p\in [2,\infty)$ and $ q \in [2, 6]$. Moreover, let $r\geq 6$ in case (i) of Assumption (S). 
Then for any $T>0$ and for each 
\begin{align*}
 v_0\in B^{3/q-1}_{q,p,N}(G) \quad \hbox{and}\quad  w_0\in H^{1,2}(G) \cap L^q(G)
\end{align*}
equation \eqref{eq:v} admits  a unique strong solution $V=(v,w)^T$ within the regularity class
\begin{align*}
v &\in H^{1,p}_\mu ((0,T); H^{-1,q}(G)) \cap L^p_{\mu}((0,T); H^{1,q}_N(G)) \cap C([0,T]; B^{3/q-1}_{q,p,N}(G)),
\\
w &\in H^{1,p} ((0,T); L^q(G)),
\end{align*}
and for $\delta\in (0,T)$
\begin{align*}
v \in H^{1,p} ((\delta,T); L^q(G)) \cap L^p((\delta,T); H^{2,q}_N(G)) \cap C([\delta,T]; B^{2-2/p}_{q,p,N}(G)). 
\end{align*}
The function $U=V+Z$, where $Z=(z,\zeta)^T$ is given by \eqref{eq:stochconvolution},  is the unique, global pathwise solution to  the stochastic bidomain equations \eqref{eq:bidomainstochaddnoise} with $V$ in the regularity class given above.
\end{theorem}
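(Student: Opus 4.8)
The plan is to mirror the two-dimensional argument of Theorem~\ref{thm:globald2}, but to compensate for the failure of the elementary energy estimates to reach the critical space in $d=3$ (cf. Remark~\ref{nohoped3}) by bootstrapping the regularity of $w$. First I would invoke the local existence result in the weak-I-setting (Proposition~\ref{theorem:localpathwiseweakI}), which applies for $d=3$, $q\in[2,6]$ with initial data $v_0\in B^{3/q-1}_{q,p,N}(G)$; by Lemma~\ref{theorem:localsmoothing} c) the solution immediately regularises, so that for any $\delta\in(0,T_{\max})$ one has $v\in H^{1,p}((\delta,T_{\max});L^q(G))\cap L^p((\delta,T_{\max});H^{2,q}_N(G))\cap C([\delta,T_{\max}];B^{2-2/p}_{q,p,N}(G))$, i.e. the strong-setting regularity. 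The continuation criterion (Corollary~\ref{corollary:continuation}) and the Serrin-type criterion (Proposition~\ref{theorem:serrin}) then tell us that $T_{\max}<\infty$ forces $v\notin L^p((0,T_{\max});X_{\mu_c})$; the task is to rule this out using a priori bounds.

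Next I would apply Lemma~\ref{lem:eb}, which is dimension-independent, to obtain $v\in L^\infty((\delta,T);L^2(G))\cap L^2((\delta,T);H^{1,2}(G))\cap L^4((\delta,T);L^4(G))$ and $w\in L^\infty((\delta,T);L^2(G))$, with the stated Gronwall-type bound controlled by the (pathwise finite) norms of $z$. The key new step, absent in $d=2$, is to upgrade $w$: since $w_0\in H^{1,2}(G)$ and $\partial_t w=-bw+cv$ with $v\in L^2((\delta,T);H^{1,2}(G))$, differentiating the ODE for $w$ in space (or equivalently testing the $w$-equation against $-\Delta w$ with the Neumann structure) yields $\nabla w\in L^\infty((0,T);L^2(G))$, hence $w\in L^\infty((0,T);H^{1,2}(G))$. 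This is exactly the extra integrability that the second component of the critical space demands and which the bare energy bound $w\in L^\infty L^2$ cannot supply for $q>2$.

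Then I would assemble the interpolation. With $\beta=\frac{d}{3q}+\frac13$ and $\mu_c=\frac1p+\frac{d}{2q}$ as in the weak-I-setting, the critical space is $X_{\mu_c}^{\sf w_1}=B^{2/q-1}_{q,p,N}(G)\times L^q(G)$ (the Sobolev index $2\mu_c-\tfrac2p-1 = \tfrac{d}{q}-1$ being $\tfrac3q-1$ for $d=3$). For the first component, the bounds $v\in L^\infty L^2\cap L^2 H^{1,2}$ interpolate to $v\in L^p((\delta,T);H^{2/p,2}(G))$, and since $p,q\ge2$ and $q\le 6$ the Sobolev embedding $H^{2/p,2}(G)\hookrightarrow B^{2/q-1}_{q,p}(G)$ holds precisely when $\frac2p-\frac32\ge \frac2q-1-\frac3q$, i.e. when $q\le 6$ (this is where the restriction $q\in[2,6]$ is used, and where $r\ge 6$ in case (i) enters to guarantee the stochastic convolution $z$ has the matching spatial integrability). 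For the second component, $w\in L^\infty((0,T);H^{1,2}(G))\hookrightarrow L^p((\delta,T);L^q(G))$ for $q\le 6$. Combining, $V\in L^p((\delta,T);X_{\mu_c}^{\sf w_1})$ for every $\delta>0$ and $T<T_{\max}$, and since the local solution already lies in $L^p_\mu((0,\delta);X_{\mu_c})$ near $0$, one concludes $V\in L^p((0,T_{\max});X_{\mu_c})$, contradicting Proposition~\ref{theorem:serrin} ii) unless $T_{\max}=\infty$. Feeding the resulting global $v$ back into the $w$-equation gives global $w\in H^{1,p}((0,T);L^q(G))$, and $U=V+Z$ solves \eqref{eq:bidomainstochaddnoise} pathwise by Corollary~\ref{cor:bidomainsol}.

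The main obstacle I expect is the $w$-component: in $d=3$ the Sobolev index of the energy bounds for $v$ is strictly below the critical index $-1$ (Remark~\ref{nohoped3}), so unlike in $d=2$ the critical norm of $V$ cannot be read off from Lemma~\ref{lem:eb} alone. Everything hinges on propagating the $H^{1,2}$-regularity of $w$ from the initial datum — which is why the hypothesis on $w_0$ is strengthened to $w_0\in H^{1,2}(G)\cap L^q(G)$ — and on checking carefully that the resulting space-time integrability of $(v,w)$ embeds into $L^p(X_{\mu_c}^{\sf w_1})$ exactly in the range $q\in[2,6]$; the borderline case $q=6$ is where the argument is tightest and must be done with care, including the matching requirement $r\ge 6$ on the noise in case (i).
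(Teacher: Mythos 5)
Your overall skeleton (local existence in a weak setting, parabolic regularisation via Lemma~\ref{theorem:localsmoothing}, a priori bounds, then a continuation argument) matches the paper's, and your observation that $w_0\in H^{1,2}(G)$ is needed to propagate $w\in L^\infty((0,T);H^{1,2}(G))$ is indeed one of the lemmas the paper proves. However, there is a genuine gap at the decisive step. You propose to verify the Serrin criterion of Proposition~\ref{theorem:serrin} for the first component by interpolating the elementary energy bounds $v\in L^\infty((\delta,T);L^2)\cap L^2((\delta,T);H^{1,2})$ to $v\in L^p((\delta,T);H^{2/p,2})$ and then Sobolev-embedding into the critical space. For $d=3$ the critical space in the weak-I setting is $X_{\mu_c}=H^{2/p+3/q-1,q}(G)\times L^q(G)$ (with $\mu_c=1/p+3/(2q)$), and the embedding $H^{2/p,2}(G)\hookrightarrow H^{2/p+3/q-1,q}(G)$ would require $2/p-3/2\ge 2/p-1$, which is false for every $p,q$; your computation inserts the two-dimensional smoothness $2/q-1$ into the target and thereby arrives at the spurious condition ``$q\le 6$''. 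This failure is exactly the content of Remark~\ref{nohoped3}: in $d=3$ the Sobolev indices of all the bounds in Lemma~\ref{lem:eb} lie strictly below the critical index $-1$, so no interpolation of them can reach $X_{\mu_c}$. Upgrading $w$ to $L^\infty H^{1,2}$ repairs only the second component and does not touch this obstruction on $v$.

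The missing idea is a further bootstrap of the a priori bounds on $v$ itself. The paper tests \eqref{eq:vA} with $\partial_t v$ (using $z\in L^2L^\infty\cap L^4L^8\cap L^6L^6$, which is where the hypothesis $r\ge 6$ in case (i) actually enters) to obtain $v\in H^{1,2}((\delta,T);L^2(G))\cap L^\infty((\delta,T);H^{1,2}(G))$, and then tests with $\aA\partial_t v$ --- here the bound $w\in L^\infty((0,T);H^{1,2}(G))$ is consumed, via the term $\|\aA^{1/2}w\|_{L^2(G)}$ in the estimate of $\aA^{1/2}R$ --- to reach $v\in L^\infty((\delta,T);H^{2,2}(G))\cap H^{1,2}((\delta,T);H^{1,2}(G))$. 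From the embedding of $H^{1,2}((\delta,T_{\max});H^{1,2}(G))$ into $BUC([\delta,T_{\max});B^{3/q-1}_{q,p}(G))$ one concludes that $\lim_{t\to T_{\max}}v(t)$ exists in the trace space, and global existence then follows from the continuation criterion of Corollary~\ref{corollary:continuation} rather than from Proposition~\ref{theorem:serrin}. Without these second and third a priori bounds on $v$ your argument cannot close.
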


\begin{remark}
Observe that $H^{1/2,2}(G) \hookrightarrow B^{3/q-1}_{qp}(G)$ for all $p,q \geq 2$ and that thus initial data $(v_0,w_0) \in H^{1/2,2}(G) \times H^{1,2}(G)$ are covered by Theorem \ref{thm:globd3}. 
\end{remark}

Following Remark \ref{nohoped3}, there is no hope to prove Theorem \ref{thm:globd3} by purely applying the energy estimates given in Lemma \ref{lem:eb}. In the deterministic case, one applies parabolic 
regularization to differentiate the equation and to apply energy estimates for $v'$ and $w'$ to show that $\lim_{t \to T_{\max}} v(t)$ exists in $B^{3/q-1}_{q,p}(G)$ and that thus the solution exists 
globally. In the stochastic case, we cannot differentiate the equation and estimate instead the term $\|\nabla \partial_t v\|_{L^2(G)}$. However, in doing so we need to assume a better regularity of the 
initial value $w_0$, since there is no spatial smoothing for $w$.

\begin{lemma}[\textit{A priori} bound on  $w$]
Let $w_0\in H^{1,2}(G)$. Then there exists $C=C(T)>0$ such that
\begin{align*}
\norm{w}_{L^{\infty}((0,T);H^{1,2}(G))}\leq C.
\end{align*}
\end{lemma}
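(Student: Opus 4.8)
The key point is that the bidomain operator provides no spatial smoothing for $w$, but the defining relation $\partial_t w = -bw + cv$, $w(0)=w_0$, is a pointwise‑in‑$x$ linear ODE, hence it transports the spatial regularity of the data $w_0$ and of the source $v$. The plan is therefore to represent $w$ by Duhamel's formula, to commute $\nabla$ with the resulting Bochner integral, and to control the outcome by combining the energy bound of Lemma~\ref{lem:eb} away from $t=0$ with the time‑weighted regularity of the local solution near $t=0$. Throughout, $v$ denotes the solution of \eqref{eq:v} furnished by Proposition~\ref{theorem:localpathwiseweakI} (extended to $(0,T)$); I would use repeatedly that for $q\ge2$ on the bounded domain $G$ one has $H^{1,q}(G)\hookrightarrow H^{1,2}(G)$, and that by Proposition~\ref{bidomopthm} together with \eqref{eq:interpolation} one has $\|\aA^{1/2}v\|_{L^2(G)}^2+\|v\|_{L^2(G)}^2\ge C\|v\|_{H^{1,2}(G)}^2$.

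First I would record, using uniqueness of the ODE for $w$, that
\begin{align*}
w(t)=e^{-bt}w_0+c\int_0^t e^{-b(t-s)}v(s)\,\ds,\qquad t\in(0,T).
\end{align*}
To make sense of this (and to handle $t$ near $0$) I would observe that, since $\mu\in(1/p,1]$ forces $(1-\mu)p'<1$ with $p'=p/(p-1)$, Hölder's inequality gives for any $\delta\in(0,T_0)$
\begin{align*}
\int_0^{\delta}\|\nabla v(s)\|_{L^2(G)}\,\ds\le\Big(\int_0^{\delta}s^{-(1-\mu)p'}\,\ds\Big)^{1/p'}\|v\|_{L^p_\mu((0,T_0);H^{1,2}(G))}<\infty,
\end{align*}
using $H^{1,q}(G)\hookrightarrow H^{1,2}(G)$; in particular $v\in L^1_{loc}([0,T);H^{1,2}(G))$, so the integral defining $w$ is a well‑defined Bochner integral in $H^{1,2}(G)$. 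Applying the bounded map $\nabla\colon H^{1,2}(G)\to L^2(G)$ and using $e^{-b(t-s)}\le1$ then yields
\begin{align*}
\|\nabla w(t)\|_{L^2(G)}\le\|\nabla w_0\|_{L^2(G)}+c\int_0^{T}\|\nabla v(s)\|_{L^2(G)}\,\ds,\qquad t\in(0,T),
\end{align*}
so that, together with the bound $\|w\|_{L^\infty((0,T);L^2(G))}\le C(T)$ from Lemma~\ref{lem:eb}, the proof reduces to bounding $\int_0^T\|\nabla v(s)\|_{L^2(G)}\,\ds$.

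For the latter I would fix $\delta\in(0,\min\{T,T_0\})$ and split the integral at $\delta$. The part over $(0,\delta)$ is finite by the Hölder estimate above. Over $(\delta,T)$, Lemma~\ref{lem:eb} gives $v\in L^2((\delta,T);H^{1,2}(G))$ with a bound depending only on $\delta$, $T$ and the (pathwise) data, so by the Cauchy--Schwarz inequality
\begin{align*}
\int_\delta^{T}\|\nabla v(s)\|_{L^2(G)}\,\ds\le(T-\delta)^{1/2}\,\|v\|_{L^2((\delta,T);H^{1,2}(G))}\le C(T).
\end{align*}
Adding the two pieces gives $\int_0^T\|\nabla v(s)\|_{L^2(G)}\,\ds\le C(T)$, and combined with the $L^\infty$‑in‑time $L^2$‑bound on $w$ from Lemma~\ref{lem:eb} this yields the claim. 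I expect the only genuine obstacle to be the behaviour near $t=0$, where $v$ carries merely the weighted regularity $v\in L^p_\mu((0,T_0);H^{1,2}(G))$ rather than honest $L^1$‑integrability into $H^{1,2}(G)$; the estimate there succeeds precisely because the weight $s^{-(1-\mu)}$ is $L^{p'}$‑integrable near $0$, which is exactly the standing condition $\mu>1/p$. The remaining steps are routine.
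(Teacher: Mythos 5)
Your proof is correct, but it takes a genuinely different route from the paper's. The paper applies $\aA^{1/2}$ to the $w$-equation, tests with $\aA^{1/2}w$, and closes the resulting differential inequality via Gronwall, absorbing the coupling term $c\langle \aA^{1/2}v,\aA^{1/2}w\rangle$ by splitting off the time weight $t^{1-\mu}$ and using that $\aA^{1/2}v$ lies in a weighted-in-time $L^2$ space. You instead exploit that the $w$-equation in \eqref{eq:vA} is a linear ODE in time: you write $w$ by Duhamel, commute $\nabla$ with the Bochner integral, and reduce the claim to $\nabla v\in L^1((0,T);L^2(G))$, which you obtain by H\"older against the integrable weight $s^{-(1-\mu)p'}$ near $t=0$ and by Cauchy--Schwarz plus Lemma~\ref{lem:eb} on $(\delta,T)$. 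Both arguments consume exactly the same inputs (the time-weighted maximal regularity of the local solution near $t=0$ and the energy bound of Lemma~\ref{lem:eb} away from it) and both yield a constant depending only on the data and $T$, which is what the continuation argument needs. Your version is more elementary --- no Gronwall, no application of $\aA^{1/2}$ to the equation --- and it makes explicit the treatment of the weighted regularity near $t=0$, which the paper leaves implicit in the norm $\|\aA^{1/2}v\|_{L^2_\mu((0,T);L^2(G))}$. Two minor points: the $L^2$-part of the $H^{1,2}$-norm of $w$ near $t=0$ is not covered by Lemma~\ref{lem:eb} (stated on $(\delta,T)$), but it follows from the same Duhamel formula without the gradient, or from $w\in C([0,T_0];L^q(G))\hookrightarrow C([0,T_0];L^2(G))$; and your argument is insensitive to whether the local solution comes from the weak-I or weak-II setting, since in either case $X_1$ embeds into $H^{1,2}(G)$ for $q\ge 2$.
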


\begin{proof}
Applying $\aA^{1/2}$ to the equation for $w$ and multiplying by $\aA^{1/2} w$ we obtain
\begin{align*}
\frac12 \partial_t \|\aA^{1/2} w\|_{L^2(G)}^2+\frac{b}{2}\|\aA^{1/2}w\|_{L^2(G)}^2
& \leq c\|\aA^{1/2} z\|_{L^2(G)}^2 +c < \aA^{1/2} v,\aA^{1/2} w>\\
& = c\|\aA^{1/2} z\|_{L^2(G)}^2 +c < t^{(1-\mu)/2}\aA^{1/2} v,t^{(\mu-1)/2}\aA^{1/2} w>
\end{align*}
and thus
\begin{align*}
\partial_t \|\aA^{1/2} w\|_{L^2(G)}^2 +\|\aA^{1/2}w\|_{L^2(G)}^2 \leq c\|\aA^{1/2} z\|_{L^2(G)}^2 +t^{1-\mu} c\|\aA^{1/2}v\|_{L^2(G)}^2+t^{\mu-1}\|\aA^{1/2}w\|_{L^2(G)}^2.
\end{align*}
Hence, 
\begin{align*}
\|\aA^{1/2} w \|_{L^2(G)}^2  \leq c\Big(\|\aA^{1/2} w(0)\|_{L^2(G)}^2+\|\aA^{1/2} z\|_{L^2((0,T);L^2(G)}^2 +\|\aA^{1/2}v\|_{L^2_{\mu}((0,T);L^2(G)}^2\Big)e^{-bT+1/\mu T^{\mu}},
\end{align*}
and the norm of $w$ in $L^{\infty}((0,T);H^{1,2}(G))$ is uniformly bounded.
\end{proof}

We now show in  two steps how to obtain additional regularity for $v$ due to parabolic smoothing. Let us start with an estimate for 
$v\in H^{1,2}((\delta,T); L^2(G))\cap L^{\infty}((\delta,T);H^{1,2}(G))$.

\begin{lemma}[Second \textit{a priori} bound on $v$]\label{lemma:d3_1}
Assume  $r\geq 6$ if case (i) of Assumption (S) applies. Then for any $0<\delta<T$ 
\begin{align*} 
v\in H^{1,2}((\delta,T); L^2(G))\cap L^{\infty}((\delta,T);H^{1,2}(G))
\end{align*}
with
\begin{multline*}
2\|\aA^{1/2}v(t)\|^2_{L^2(G)}+  \|v(t)\|_{L^4(G)}^4+2\int_\delta^t\|\partial_t v(t)\|_{L^2(G)}^2\d s 
 \leq \big(2\|\aA^{1/2}v(\delta)\|^2_{L^2(G)}+ \|v(\delta)\|_{L^4(G)}^4+ \\
  c\int_\delta^T 
\|z\|_{L^{4}(G)}^4+\|z\|_{L^{6}(G)}^6+\|z\|_{L^{8}(G)}^4+\|v\|_{L^{2}(G)}^2+\|w\|_{L^2(G)}^2 \ds\big)
 \cdot e^{cT+c\int_\delta^T\|z\|_{L^{\infty}(G)}^2+\|z\|_{L^{8}(G)}^4\d s}.
\end{multline*}
\end{lemma}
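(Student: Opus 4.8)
The plan is to test the first equation in \eqref{eq:vA} with $\partial_t v$ in $L^2(G)$ and to close a Gronwall estimate for
\[
y(t):=2\|\aA^{1/2}v(t)\|_{L^2(G)}^2+\|v(t)\|_{L^4(G)}^4+2\int_\delta^t\|\partial_t v(s)\|_{L^2(G)}^2\ds,
\]
which is exactly the left-hand side of the claimed inequality. First I would record the integrability of $z$ on $(\delta,T)$ that is needed below. By Corollary~\ref{theorem:stochlinex}, choosing $\theta\in[0,1/2)$ small, one has (pathwise) $z\in H^{\theta,s}((0,\infty);H^{1-2\theta,r}_N(G))$ in case (i) of Assumption (S) and $z\in H^{\theta,s}((0,\infty);H^{2-2\theta,r}_N(G))$ in case (ii); since $d=3$, $r\ge 6$ in case (i), and $s\ge 2$ on the bounded interval $(\delta,T)$, the Sobolev embeddings give
\[
z\in L^2((\delta,T);L^\infty(G))\cap L^4((\delta,T);L^8(G))\cap L^6((\delta,T);L^6(G))\cap L^4((\delta,T);L^4(G)),
\]
which is precisely where the hypothesis ``$r\ge 6$ in case (i)'' enters. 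By Lemma~\ref{lem:eb} one also has $v,w\in L^\infty((\delta,T);L^2(G))$, so every term on the right-hand side of the claimed estimate is finite.

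Second I would justify the energy identity. By Lemma~\ref{theorem:localsmoothing} the solution lies, on every compact subinterval of $(0,T)$, in the strong class, hence $v\in C([\delta',T];B^{2-2/p}_{q,p,N}(G))\hookrightarrow C([\delta',T];H^{1,2}(G))$ for $0<\delta'<\delta$, $d=3$ and $p,q\ge 2$; in particular $y(\delta)<\infty$, and a standard approximation in the eigenbasis of the self-adjoint operator $\aA$ legitimises testing with $\partial_t v$ as well as the identity $\langle\aA v,\partial_t v\rangle=\tfrac12\tddt\|\aA^{1/2}v\|_{L^2(G)}^2$ (recall $\cD(\aA^{1/2})=H^{1,2}(G)$ by Proposition~\ref{bidomopthm}); moreover $\langle v^3,\partial_t v\rangle=\tfrac14\tddt\|v\|_{L^4(G)}^4$. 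Denoting by $g:=-3v^2z-3vz^2-z^3+(a+1)(v^2+2vz+z^2)-av-w$ the remaining terms on the right-hand side of \eqref{eq:vA} and using $4\langle g,\partial_t v\rangle\le 2\|\partial_t v\|_{L^2(G)}^2+2\|g\|_{L^2(G)}^2$, one arrives at
\begin{align*}
\tddt\Big(2\|\aA^{1/2}v(t)\|_{L^2(G)}^2+\|v(t)\|_{L^4(G)}^4\Big)+2\|\partial_t v(t)\|_{L^2(G)}^2\le 2\|g(t)\|_{L^2(G)}^2.
\end{align*}

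The heart of the matter is the estimate of $\|g\|_{L^2(G)}^2$ by Hölder's and Young's inequalities: one has $\|v^2z\|_{L^2}^2\le\|z\|_{L^\infty}^2\|v\|_{L^4}^4$, and $\|vz^2\|_{L^2}^2\le\|v\|_{L^4}^2\|z\|_{L^8}^4\le\|z\|_{L^8}^4\big(1+\|v\|_{L^4}^4\big)$, and $\|z^3\|_{L^2}^2=\|z\|_{L^6}^6$, and $\|v^2\|_{L^2}^2=\|v\|_{L^4}^4$, and $\|vz\|_{L^2}^2\le\tfrac12\|v\|_{L^4}^4+\tfrac12\|z\|_{L^4}^4$, and $\|z^2\|_{L^2}^2=\|z\|_{L^4}^4$, whence
\begin{align*}
\|g\|_{L^2(G)}^2\le c\big(1+\|z\|_{L^\infty(G)}^2+\|z\|_{L^8(G)}^4\big)\|v\|_{L^4(G)}^4+c\big(\|z\|_{L^4(G)}^4+\|z\|_{L^6(G)}^6+\|z\|_{L^8(G)}^4+\|v\|_{L^2(G)}^2+\|w\|_{L^2(G)}^2\big).
\end{align*}
Since $\|v\|_{L^4(G)}^4\le y(t)$, integrating over $(\delta,t)$ and applying Gronwall's inequality to $y(\cdot)$ --- the coefficient $1+\|z\|_{L^\infty}^2+\|z\|_{L^8}^4$ being integrable on $(\delta,T)$ by the first step --- yields in one stroke the asserted inequality, the factor $e^{cT}$ in the exponent accounting for $\int_\delta^T c\,\ds$; in particular $v\in L^\infty((\delta,T);H^{1,2}(G))\cap H^{1,2}((\delta,T);L^2(G))$. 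I expect the genuine obstacle to be not the Hölder--Young bookkeeping but the rigorous justification of testing with $\partial_t v$ and of the chain rule for $\tddt\|\aA^{1/2}v\|_{L^2(G)}^2$ at the regularity level available a priori; this I would handle by combining the parabolic smoothing on $(\delta',T)$ with a Galerkin approximation. A secondary point is to verify, via Corollary~\ref{theorem:stochlinex} and the Sobolev embeddings in $d=3$, the membership $z\in L^2((\delta,T);L^\infty(G))$, for which the assumption $r\ge 6$ in case (i) is exactly what is needed.
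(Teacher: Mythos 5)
Your argument coincides with the paper's proof: both test \eqref{eq:vA} with $\partial_t v$, absorb $\tfrac12\|\partial_t v\|_{L^2}^2$ by Young's inequality, bound the remaining terms by H\"older exactly as you do, invoke Corollary~\ref{theorem:stochlinex} (with $\theta=0,\tfrac14,\tfrac13$ and $r\geq 6$) for $z\in L^2L^\infty\cap L^4L^8\cap L^6L^6$, and conclude by Gronwall. Your additional remarks on justifying the chain rule via parabolic smoothing and approximation are a welcome refinement of a step the paper passes over silently, but they do not change the route.
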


\begin{proof}
Multiplication of \eqref{eq:vA} by $\partial_t v$ and parabolic regularisation yield
\begin{align*}
&\|\partial_t v\|_{L^2(G)}^2 +\frac{1}{2}\partial_t \|\aA^{1/2}v\|^2_{L^2(G)}+\frac{1}{4} \partial_t \|v\|_{L^4(G)}^4 \\
&\qquad =<-3v^2z-3vz^2-z^3+(a+1)(v^2+2vz+z^2)-av-w,\partial_t v>\\
&\qquad \leq \frac{1}{2}\|v_t\|_{L^2(G)}^2+c\|3v^2z-3vz^2-z^3+(a+1)(v^2+2vz+z^2)-av-w\|_{L^2(G)}^2\\
&\qquad \leq \frac{1}{2}\|v_t\|_{L^2(G)}^2+c\left(\|z\|_{L^{\infty}(G)}^2\|v\|_{L^4(G)}^4+\|z\|_{L^{8}(G)}^4\|v\|_{L^4(G)}^2\right. \\
&\qquad\qquad\qquad\qquad\qquad\quad  \left. +\|z\|_{L^{6}(G)}^6
+\|v\|_{L^4(G)}^4+\|z\|_{L^{4}(G)}^4+\|v\|_{L^{2}(G)}^2+\|w\|_{L^2(G)}^2\right).
\end{align*}
We have $z \in L^2((0,T);L^{\infty}(G))$ in case (i) by choosing $\theta=0$ in Corollary \ref{theorem:stochlinex}, $z \in L^4((0,T);L^{8}(G))$ follows by choosing $\theta=\frac14$, while $z \in L^6((0,T);L^{6}(G))$ is obtained in case i) by the choice $\theta=1/3$.
Hence, we by Gronwall's inequality the stated estimate follows.
\end{proof}

\noindent
The improved regularity for $v$ gives by Sobolev embedding $v\in L^{\infty}((\delta,T);L^6(G))$, which can be used to estimate the right hand side $$R:= v^3+3v^2z+3vz^2+z^3-(a+1)(v^2+2vz+z^2)+av+w$$  in equation \eqref{eq:vA} and implies an a-priori bound for $v$ in $H^{1,2}((\delta,T);H^{1,2}(G))$.

\begin{lemma}[Third \textit{a priori} bound on $v$]
Let additionally $r\geq 6$ in case (i) of Assumption (S). Then for any $0<\delta<T$ 
\begin{align*}
v\in  L^{\infty}((\delta,T);H^{2,2}(G)) \cap H^{1,2}((\delta,T);H^{1,2}(G)).
\end{align*}
\end{lemma}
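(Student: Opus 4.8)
The plan is to run one more parabolic bootstrap, at one regularity level above Lemma~\ref{lemma:d3_1}, again justifying the formal energy identities by parabolic regularisation as in the preceding proofs, and working pathwise. From Lemma~\ref{lemma:d3_1} we have $v\in L^{\infty}((\delta,T);H^{1,2}(G))$, and since $d=3$ the embedding $H^{1,2}(G)\hookrightarrow L^6(G)$ gives $v\in L^{\infty}((\delta,T);L^6(G))$, hence $v\in L^6((\delta,T);L^6(G))$ on the finite interval. Combining this with the regularity of $z$ from Corollary~\ref{theorem:stochlinex} (choosing $\theta$ as in the proof of Lemma~\ref{lemma:d3_1}, so that $z\in L^6((0,T);L^6(G))$) and with $w\in L^{\infty}((0,T);L^2(G))$ from the \textit{a priori} bound on $w$, one checks by H\"older's inequality that the right-hand side
$$R=v^3+3v^2z+3vz^2+z^3-(a+1)(v^2+2vz+z^2)+av+w$$
of \eqref{eq:vA} satisfies $R\in L^2((\delta,T);L^2(G))$. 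Testing \eqref{eq:vA} with $\aA v$, using $\langle\partial_t v,\aA v\rangle=\tfrac12\partial_t\|\aA^{1/2}v\|_{L^2(G)}^2$ and Young's inequality on $-\langle R,\aA v\rangle$, and integrating in time, one obtains after a harmless shrinking of the interval (which is admissible since $\delta\in(0,T)$ is arbitrary) a bound for $v$ in $L^{\infty}((\delta,T);H^{1,2}(G))\cap L^2((\delta,T);H^{2,2}(G))$; in particular $v\in L^2((\delta,T);H^{2,2}(G))$.

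The point is now that, again because $d=3$, $H^{2,2}(G)\hookrightarrow W^{1,6}(G)\cap L^{\infty}(G)$, so this gives $\nabla v\in L^2((\delta,T);L^6(G))$ and $v\in L^2((\delta,T);L^{\infty}(G))$, while still $v\in L^{\infty}((\delta,T);L^6(G))$. Using these bounds, together with the spatial gradient regularity of $z$ from Corollary~\ref{theorem:stochlinex} — this is precisely where the hypothesis $r\geq6$ in case (i) of Assumption~(S) is used, to secure e.g. $\nabla z\in L^2((\delta,T);L^3(G))$ and $z\in L^{\infty}((\delta,T);L^6(G))$ — and $\nabla w\in L^{\infty}((\delta,T);L^2(G))$ from the \textit{a priori} bound on $w$, one estimates each summand of $\nabla R$ in $L^2((\delta,T);L^2(G))$ by H\"older's inequality, the governing term being $\nabla(v^3)=3v^2\nabla v$ controlled via $\|v^2\nabla v\|_{L^2(G)}\leq\|v\|_{L^6(G)}^2\|\nabla v\|_{L^6(G)}$. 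This yields $R\in L^2((\delta,T);H^{1,2}(G))$, i.e. $\aA^{1/2}R\in L^2((\delta,T);L^2(G))$.

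Finally, applying $\aA^{1/2}$ to \eqref{eq:vA} gives $\partial_t(\aA^{1/2}v)+\aA(\aA^{1/2}v)=-\aA^{1/2}R$; testing this (after parabolic regularisation) with $\partial_t(\aA^{1/2}v)$, using $\langle\aA(\aA^{1/2}v),\partial_t(\aA^{1/2}v)\rangle=\tfrac12\partial_t\|\aA v\|_{L^2(G)}^2$ and Young's inequality on the right-hand side, and integrating from a point $\delta'\in(\delta,T)$ with $v(\delta')\in H^{2,2}(G)$ (available for a.e.\ such $\delta'$ by the previous step), one obtains
\begin{align*}
\|\aA v(t)\|_{L^2(G)}^2+\int_{\delta'}^{t}\|\aA^{1/2}\partial_t v(s)\|_{L^2(G)}^2\ds\leq\|\aA v(\delta')\|_{L^2(G)}^2+\int_{\delta'}^{T}\|\aA^{1/2}R(s)\|_{L^2(G)}^2\ds,\quad\delta'<t<T.
\end{align*}
Hence $\aA v\in L^{\infty}((\delta',T);L^2(G))$ and $\aA^{1/2}\partial_t v\in L^2((\delta',T);L^2(G))$, that is, $v\in L^{\infty}((\delta',T);H^{2,2}(G))\cap H^{1,2}((\delta',T);H^{1,2}(G))$; since $\delta'$ may be taken arbitrarily close to $\delta$ and $\delta\in(0,T)$ is arbitrary, this is the claim. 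I expect the second step — establishing $R\in L^2((\delta,T);H^{1,2}(G))$ — to be the main obstacle: the cubic nonlinearity forces one first to upgrade $v$ to $L^2((\delta,T);H^{2,2}(G))$ and then to spend exactly that regularity, through the three-dimensional Sobolev embeddings, on $\nabla R$, while the limited time regularity of the stochastic convolution $z$ has to be compensated by its spatial integrability, which is the reason for the extra requirement $r\geq6$ in case (i).
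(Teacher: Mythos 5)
Your argument is correct, but it is organized differently from the paper's proof. The paper performs a \emph{single} energy step: it multiplies \eqref{eq:vA} by $\aA\partial_t v$ to get $\partial_t\|\aA v\|_{L^2(G)}^2+\|\aA^{1/2}\partial_t v\|_{L^2(G)}^2\le\|\aA^{1/2}R\|_{L^2(G)}^2$, and then — since at that point only $v\in L^\infty((\delta,T);L^6(G))$ is known, not $\nabla v\in L^2((\delta,T);L^6(G))$ — it uses the Gagliardo--Nirenberg inequality $\|\aA^{1/2}v\|_{L^6(G)}\le c\|\aA v\|_{L^2(G)}+c\|v\|_{L^6(G)}$ to bound $\|\aA^{1/2}R\|_{L^2(G)}^2\le a(t)\|\aA v\|_{L^2(G)}^2+b(t)$ with $a,b\in L^1(\delta,T)$, closing the estimate by Gronwall. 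You instead insert an intermediate bootstrap: test with $\aA v$ to obtain $v\in L^2((\delta,T);H^{2,2}(G))$, use this (via $H^{2,2}\hookrightarrow W^{1,6}\cap L^\infty$) to show $\aA^{1/2}R\in L^2((\delta,T);L^2(G))$ outright, and then run the same final energy identity without any Gronwall absorption. Both routes spend the hypothesis $r\ge6$ in case (i) in the same place, namely on the spatial integrability of $\aA^{1/2}z$; note only that your sample claim $\nabla z\in L^2((\delta,T);L^3(G))$ is slightly too weak to close the term $v^2\nabla z$ with the available time integrability of $v$ — but since $r\ge6$ and $s\ge2$ actually give $\aA^{1/2}z\in L^2((0,T);L^6(G))$ (choose $\theta=0$ in Corollary \ref{theorem:stochlinex}), the estimate $\|v^2\nabla z\|_{L^2(G)}\le\|v\|_{L^6(G)}^2\|\nabla z\|_{L^6(G)}$ goes through with $v\in L^\infty((\delta,T);L^6(G))$. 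Your version costs one extra energy inequality but avoids the Gronwall factor $e^{\int a}$; the paper's version is shorter but requires carefully tracking which coefficients of $\|\aA v\|_{L^2(G)}^2$ are time-integrable.
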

\begin{proof}
The Gagliardo-Nirenberg inequality gives $\|\aA^{1/2}v\|_{L^6(G)}\leq c\|\aA v\|_{L^2(G)}+c\|v\|_{L^6(G)}$, and with this we estimate the terms in $\aA^{1/2}R$ by
\begin{align*}
\|\aA^{1/2} (v^3)\|^2_{L^2(G)}
&\leq c\|v^2\|^2_{L^3(G)}\|\aA^{1/2} v\|^2_{L^6(G)}+c\|\aA^{1/2} v^2\|^3_{L^3(G)}\|v\|^2_{L^6(G)}
\leq c\|v\|_{L^6(G)}^4 \|\aA^{1/2}v\|^2_{L^6(G)}\\
&\leq c\|v\|_{L^6(G)}^4 \|\aA v\|^2_{L^2(G)}+c\|v\|_{L^6(G)}^6,\\
\|\aA^{1/2} (v^2 z)\|^2_{L^2(G)}
&\leq c\|v^2\|^2_{L^3(G)}\|\aA^{1/2} z\|^2_{L^6(G)}+c\|\aA^{1/2}v^2\|^2_{L^3(G)}\|z\|^2_{L^6(G)}\\
&\leq c\|v\|_{L^6(G)}^4 \|\aA^{1/2} z\|^2_{L^6(G)}+ c\|v\|_{L^6(G)}^4\|z\|^2_{L^6(G)} +c\|\aA v\|^2_{L^2(G)}\|v\|^2_{L^6(G)}\|z\|^2_{L^6(G)},\\
\|\aA^{1/2} (vz^2)\|^2_{L^2(G)}
&\leq c\|z^2\|^2_{L^3(G)}\|\aA^{1/2} v\|^2_{L^6(G)}+c\|\aA^{1/2} z^2\|^2_{L^3(G)}\|v\|^2_{L^6(G)}\\
&\leq c\|z\|_{L^6(G)}^4\|\aA v\|^2_{L^2(G)}+c\|z\|_{L^6(G)}^4\|v\|_{L^6(G)}^2+c\|\aA^{1/2} z\|^2_{L^6(G)}\|z\|^2_{L^6(G)}\|v\|^2_{L^6(G)},\\
\|\aA^{1/2} (z^3)\|^2_{L^2(G)}
&\leq c\|z\|_{L^6(G)}^4\|\aA^{1/2} z\|^2_{L^6(G)},
\end{align*}
as well as
\begin{align*}
\|\aA^{1/2} (v^2+2vz+z^2)\|^2_{L^2(G)}
& \leq c\Big(\|\aA^{1/2} v\|^2_{L^4(G)}\| v\|^2_{L^4(G)}+\|\aA^{1/2} z\|^2_{L^4(G)}\| v\|^2_{L^4(G)} \\
&\qquad  +\|\aA^{1/2} v\|^2_{L^4(G)}\| z\|^2_{L^4(G)} +\|\aA^{1/2} z\|^2_{L^4(G)}\| z\|^2_{L^4(G)} \Big)\\
& \leq c\Big(\|\aA v\|^2_{L^2(G)}\| v\|^2_{L^4(G)}+\| v\|^4_{L^4(G)}+\|\aA^{1/2} z\|^2_{L^4(G)}\| v\|^2_{L^4(G)} \\
&\qquad  +\|\aA  v\|^2_{L^2(G)}\| z\|^2_{L^4(G)}+\| z\|^2_{L^4(G)} \| v\|^2_{L^4(G)}  +\|\aA^{1/2} z\|^2_{L^4(G)}\| z\|^2_{L^4(G)} \Big).
\end{align*}
Noting that
\begin{align*}
 \|\aA^{1/2} (av+w)\|_{L^2(G)} \leq c(\|\aA^{1/2} v\|_{L^2(G)}  
 +\|\aA^{1/2} w\|_{L^2(G)} )
\end{align*}
we finally obtain
\begin{align*}
\|\aA^{1/2} R\|_{L^2(G)}^2\leq& c(\|v\|_{L^6(G)}^4+\| v\|^2_{L^4(G)}+\| z\|^2_{L^4(G)} +\|z\|_{L^6(G)}^4 ) \|\aA v\|^2_{L^2(G)}\\
 &+c(\|\aA^{1/2} w\|^2_{L^2(G)}  +\|\aA^{1/2} v\|^2_{L^2(G)} +\|v\|_{L^6(G)}^6+\| v\|^4_{L^4(G)}+\| z\|^4_{L^4(G)}+\|z\|^6_{L^6(G)})\\
 &\qquad +c(\|v\|_{L^6(G)}^4 +\|z\|_{L^6(G)}^4)\|\aA^{1/2} z\|^2_{L^6(G)}+c(\|v\|_{L^4(G)}^2 +\|z\|_{L^4(G)}^2)\|\aA^{1/2} z\|^2_{L^4(G)}.
\end{align*}
In case i) of assumption (S) we have $\aA^{1/2}z\in L^2((0,T);L^6(G))$ by choosing $\theta=0$ in Corollary \ref{theorem:stochlinex} and $z\in L^{\infty}((0,T);L^6(G))$ by choosing $\theta\in(1/s,1/2)$. Note, that $s>2$ since $r\neq 2$. So we have
\begin{align*}
c\int_\delta^T \|v\|_{L^6(G)}^4+\| v\|^2_{L^4(G)}+\| z\|^2_{L^4(G)} +\|z\|_{L^6(G)}^4 \d s\leq c
\end{align*}
and
\begin{align*}
&\int_\delta^T c(\|\aA^{1/2} w\|^2_{L^2(G)}  +\|\aA^{1/2} v\|^2_{L^2(G)} +\|v\|_{L^6(G)}^6+\| v\|^4_{L^4(G)}+\| z\|^4_{L^4(G)}+\|z\|^6_{L^6(G)})\\
 &\qquad +c(\|v\|_{L^6(G)}^4 +\|z\|_{L^6(G)}^4)\|\aA^{1/2} z\|^2_{L^6(G)}+c(\|v\|_{L^4(G)}^2 +\|z\|_{L^4(G)}^2)\|\aA^{1/2} z\|^2_{L^4(G)} \ds \leq c
\end{align*}
In case ii) of assumption (S) we can argue in the same way if $s>2$, since $D(\aA^{1/2})\subset L^6(G)$. For $s=2$ and hence also $r=2$ we obtain $\aA^{1/2}z\in L^2((0,T);L^6(G))$ by choosing again $\theta=0$ and because of $z\in L^{\infty}((0,T);B^1_{2,2}(G))= L^{\infty}((0,T);H^{1,2}(G))$ we get $z\in L^{\infty}((0,T);L^6(G))$.\\
Multiplying now \eqref{eq:vA} with $\aA \partial_t v$ we get
\begin{align*}
\partial_t \|\aA v\|_{L^2(G)}^2+ \|\aA^{1/2} \partial_t v\|_{L^2(G)}^2 \leq \|\aA^{1/2} R\|_{L^2(G)}^2,
\end{align*}
and by Gronwall's inequality we obtain $v\in L^{\infty}((\delta,T);H^{2,2}(G)) \cap H^{1,2}((\delta,T);H^{1,2}(G))$. \qedhere
\end{proof}

\begin{proof}[Proof of Theorem~\ref{thm:globd3}]
The existence of a local solution was already established in  Proposition  \ref{theorem:localpathwiseweakII} for $q<6$  and in Proposition \ref{theorem:localpathwiseweakI} for $q=6$. 
By Lemma  \ref{theorem:localsmoothing} this local solution belongs to 
\begin{align*}
	H^{1,p} ((\delta,T); L^q(G)) \cap L^p((\delta,T); H^{2,q}_N(G)) \cap C([\delta,T]; B^{2-2/p}_{q,p,N}(G))
\end{align*}
for all $0<\delta<T$. The embeddings
$$H^{1,2}((\delta,T);H^{1,2}(G)) 
\hookrightarrow BC^{\frac12}([\delta, T); B^{\frac{3}{q}+1-\frac{3}{2}}_{q,p}(G)) 
\hookrightarrow BC^{\frac12}([\delta,T); B^{\frac{3}{q}+1-\frac{3}{2}}_{q,p}(G))
\hookrightarrow BUC ([\delta, T);B^{\frac{3}{q}-1}_{q,p}(G))$$
yield the existence of $\lim_{t\to T_{\max}} v(t)$ in $B^{\frac{3}{q}-1}_{qp}(G)$. Corollary \ref{corollary:continuation} implies then the global existence of $v$ and thus the assertion. 
\end{proof}

\noindent
{\bf Acknowledgement.} The authors would like to thank Zdzislaw Brzezniak for stimulating discussions. The third author  gratefully acknowledges the financial support of the Deutsche Forschungsgemeinschaft (DFG) through the research fellowship SA 3887/1-1.

\end{document}